\newtheorem{lemma}{Lemma}
\newtheorem{theorem}{Theorem}
\newtheorem{proposition}{Proposition}
\newcommand{\sign}{\texttt{sign}}
\newcommand{\R}{\mathbb{R}}
\title{A flexible framework for 
communication-efficient machine learning: from HPC to IoT}
\author{%
   Sarit~Khirirat \\
      Division of Decision and Control Systems\\
      KTH Royal Institute of Technology\\
      \texttt{sarit@kth.se} \\
      %% examples of more authors
       \And
       Sindri~Magnússon \\
      Division of Decision and Control Systems\\
      KTH Royal Institute of Technology\\
      \texttt{sindrim@kth.se} \\
      \And 
      Arda~Aytekin \\ 
      Ericsson  \\ 
      Stockholm, Sweden \\ 
      \texttt{arda@aytekin.biz}\\
       \And
       Mikael~Johansson  \\
      Division of Decision and Control Systems\\
      KTH Royal Institute of Technology\\
      \texttt{mikaelj@kth.se} 
  % examples of more authors
  % \And
  % Coauthor \\
  % Affiliation \\
  % Address \\
  % \texttt{email} \\
  % \AND
  % Coauthor \\
  % Affiliation \\
  % Address \\
  % \texttt{email} \\
  % \And
  % Coauthor \\
  % Affiliation \\
  % Address \\
  % \texttt{email} \\
  % \And
  % Coauthor \\
  % Affiliation \\
  % Address \\
  % \texttt{email} \\
}
\begin{document}

\maketitle

\begin{abstract}
With the increasing scale of machine learning tasks, it has become essential to reduce the communication between computing nodes. Early work on gradient compression focused on the bottleneck between CPUs and GPUs, but  communication-efficiency is now needed in a variety of  different system architectures, from high-performance clusters to energy-constrained IoT devices. In the current practice, compression levels are typically chosen before training and settings that work well for one task may be vastly suboptimal for another dataset on another architecture. In this paper, we propose a flexible framework which adapts the compression level to the true gradient at each iteration, maximizing the improvement in the objective function that is achieved per communicated bit. Our framework is easy to adapt from one technology to the next by modeling how the communication cost depends on the compression level for the specific technology. Theoretical results and practical experiments indicate that the automatic tuning strategies significantly increase communication efficiency on several state-of-the-art compression schemes.  
\end{abstract}

\section{Introduction}

%{\color{blue}
%\begin{itemize}
 %   \item For ICML, people will know about the link between ERM and optimization, so I believe that we should try to go straight for communication overhead and gradient sparsification in the first paragraph.
 %   \item In the second paragraph, we can then expand on deatils and focus on ERM and first-order methods.
%\end{itemize}
%}

\iffalse
\begin{itemize}
    \item large-scale computing systems introduce communication overhead for exchanging vectors with huge dimension between machines.
    \item Therefore, designing optimal algorithms  -- increase in parallel computations and reduce in communications.
    \item To reduce communications, we can use compression schemes -- sparsification, quantization or both.
    \item Sparsified gradient methods -- works 
\end{itemize}
\fi

 The vast size of modern machine learning is shifting the focus for optimization and learning algorithms from centralized to distributed architectures. State-of-the-art models are now typically trained using multiple CPUs or GPUs and  data is increasingly being collected and processed in networks of
resource-constrained devices, e.g., IoT devices, smart phones, or wireless
sensors. This trend is shifting the bottleneck  from the computation to the communication. % huge  communication overhead. 
The shift is particularly striking when learning is performed on energy-constrained devices that communicate over shared wireless channels. 
 %
% This makes computations manageable but  creates huge communication overheads for large dimensional problems~\cite{dean2012large,seide20141,alistarh2017qsgd}.
%
% 
% exhaustive 
 %
% This is largely due to the  The related optimization also demand massive amount of communications. 
 Indeed, distributed training is often communication bound since the associated   
  optimization algorithms hinge on frequent transmissions of gradients between nodes. These gradients are typically huge:  it is not uncommon for state-of-the-art models to have millions of parameters. 
 To get a sense of the corresponding communication cost, transmitting a single gradient or stochastic gradient using single precision (32 bits per entry) requires 40 MB for a model with 10 million parameters.   
%  gradient compression scemes.
%  The main goal of this is to 
If we use 4G communications, this means that we can expect to transmit roughly one gradient per second. 
 The huge communication load easily overburdens training even on loosely interconnected clusters and may render federated learning on some IoT or edge devices infeasible.
 %This is already huge expensive for collocated servers, for feederated learning. ....

% These overhead are attributed to frequent communication of huge dimensional gradients or stochastic gradients.  
% To get a sense of

 To alleviate the communication bottleneck, much recent research has focused on compressed gradient methods. These methods achieve communication efficiency by using only the most informative parts of the gradients at each iteration. We may, for example, sparsify the gradient, \emph{i.e.} use only the most significant entries at each iteration and set the rest to be zero~ \cite{alistarh2017qsgd,alistarh2018convergence,stich2018sparsified,wen2017terngrad,wang2018atomo,khirirat2018gradient,wangni2018gradient}. 
 We may also quantize the gradient elements or  do some mix of quantization and sparsification~\cite{alistarh2017qsgd,khirirat2018distributed,magnusson2017convergence,wangni2018gradient,zhu2016trained,rabbat2005quantized}. Several of the cited papers have demonstrated huge communication improvements for specific training problems.  However, these communication benefits are often realized after a careful tuning of the compression level before training, \emph{e.g.} the number of elements to keep when sparsifying the gradient. We cannot expect there to be a universally good compressor that works well on all problems, as shown by the worst-case communication complexity of optimization in~\cite{tsitsiklis1987communication}.  There is generally a delicate problem-specific balance between compressing too much or too little.  Trying to strike the right balance by  hyper-parameter tuning is expensive and the resulting tuning parameters will be problem specific. Moreover, most existing compression schemes are agnostic of the disparate communication costs for different technologies. In contrast, our proposed on-line mechanism adapts the compression level to the information content of each gradient and the platform-specific communication cost.

\textbf{Contributions:} We propose a flexible framework for on-line adaption of the gradient compression level to the problem data and communication technology used. This Communication-aware Adaptive Tuning (CAT) optimally adjusts the compression of each communicated gradient by maximizing ratio between guaranteed objective function improvement and communication cost.
%
%
%used with any compression scheme to find the optimal tuning for each communicated gradient by maximizing the objective function improvement achieved per communication cost. 
 The communication cost can easily be adjusted to the technology used, either by analytical models of the communication protocols or through empirical measurements. 
We illustrate these ideas on three state-of-the-art compression schemes: a) sparsification, b) sparsification with quantization and c) stochastic sparsification. In all cases, we first derive descent lemmas specific to the compression, relating  the function improvement to the  tuning parameter. Using these results we can find the tuning that optimizes the communication efficiency measured in descent relative to the given communication costs.
%
%Our tuning is communication-aware, meaning that it  achieves  optimal efficiency for general communication schemes, e.g., communication standards or models used. 
 Even though most of our theoretical results are for a single node, we illustrate the efficiency of CAT to all three compression schemes in large scale experiments in multi-nodes settings. 
  For the stochastic sparsification we also prove convergence for stochastic gradient in multi-node settings.

\textbf{Notation:}
We let $\mathbb{N}$, $\mathbb{N}_0$, and $\mathbb{R}$ be the set of natural numbers, the set of natural numbers including zero, and the set of real numbers, respectively.  
The set $\{a,a+1,\ldots, b\}$ is denoted by $[a,b]$ where $a,b\in\mathbb{N}_0$ and $a\leq b$.
For $x\in\mathbb{R}^d$, $\|x\|_q$  is the $\ell_q$ norm with $q \in (0,\infty]$. A continuously differentiable function $F:\mathbb{R}^d\rightarrow\mathbb{R}$ is $L$-smooth if for all $x,y\in\mathbb{R}^d$ we have
%
%
%\begin{align}
    $\| \nabla F(x) - \nabla F(y) \| \leq L \| x-y \|$.
%\end{align}
We say that $F$ is $\mu$-strongly convex if  
%\begin{align}
   $ F(y) \geq F(x) + \langle \nabla F(x),y-x \rangle + ({\mu}/{2})\| y-x\|^2.$

\section{Background}

 The main focus of this paper is empirical risk minimization 
 $$\underset{x\in \R^d}{\text{minimize }} F(x)= \frac{1}{|\mathcal{D}|}\sum_{z\in\mathcal{D}} L(x;z)$$
 where $\mathcal{D}$ is a set of data points\added{, $x$ is the model parameters} and $L(\cdot)$ is a loss function.

 \subsection{Gradient Compression } \label{Sec:GC}

\replaced{Consider the standard compressed gradient iteration}{We study compressed gradient methods} %for solving the minimization problem over the objective function $F(x)$; given an initial solution $x^0$, the iteration has the form
 \begin{equation} \label{Alg:main} x^{i+1}= x^i-\gamma^i Q_T(\nabla F(x^i)).\end{equation}
  Here, $Q_T(\cdot)$ is a compression operator  and $T$ is a parameter that controls the level of compression. 
%  Ideally, $Q(\nabla F(x^i))$ should extract as much information  the  
% {Alg:main} {eq:T_sparse_quant} 
  %should include the 
 The goal  is to  achieve communication efficiency by using only the most significant information.
 %\deleted{ of each gradient}.
 One of the simplest compression schemes is to sparsify the gradient, \emph{i.e.} to let %sparsifies the gradient using 
 %, e.g., \replaced{sparsify the gradient using}{use only the \replaced{$T$ components of largest magnitude in}{top $T$ %most significant gradient components at} each iteration, i.e. sparsify the gradient as:}
\deleted{In this case $Q(\cdot)$ is the sparsifing operator} 
\begin{equation} \label{eq:T_sparse_operator}  [Q_T(g)]_j=\begin{cases} g_j & \text{if }j\in I_T(g) \\ 0 & \text{otherwise.} \end{cases}
\end{equation}
where $I_T(g)$ is the index set for the $T$ components of $g$ with largest magnitude.
 %We might also use quantization on top of the sparsification by sending only the the gradient the gradient magnitude and the sparsifity structure ~\cite{alistarh2017qsgd}. In this case $Q(\cdot)$ becomes
 The following combination of sparsification and quantization has been shown to give good practical performance~\cite{alistarh2017qsgd}: 
\begin{equation}  \label{eq:T_sparse_quant} 
 [Q_{T}(g)]_j=\begin{cases}||g|| \sign(g_j) & \text{if } i \in I_T(g) \\ 0 & \text{otherwise}. \end{cases}
\end{equation}
 In this case, we communicate only the gradient magnitude and the sparsity pattern of the gradient. % and  $Q(\cdot)$ becomes %^{\texttt{SQ}}
\
 %If we are performing stochastic gradient methods then it might be 
 It is sometimes advantageous to use stochastic sparsification. 
 %When performing stochastic gradient methods then it  advantages to use stochastic sparsification methods. 
% We can improve the communication efficiency of these methods by the dynamic tuning ideas introduced in this paper, as we show in this section. 
 %
 %show that we can improve the communication %efficiency of these methods by the dynamic tuning ideas introduced in the paper. % we have introduced earlier in this paper. 
 %Moreover, the favorable properties of stochastic sparsification allow us to derive convergence results for multiple nodes settings.
% We first illustrate the main ideas on a non-stochastic gradient method and single node, and expand the result to stochastic gradient and multiple nodes in Subsection~\ref{sec:multi-node}. Let 
% $$g(x,\xi)
 %
 %We now illustrate how the ideas we  introduced in this paper 
 %
 %
 %By using stochastic sparsification we can also derive results for multiple nodes. %    have become a good candidate  be
%\begin{align}\label{eqn:Stoch_Spar}
%    x^{i+1} &= x^i - \gamma^i \bar{Q}_{B^i,p^i} \left(\nabla F(x^i) \right).
%\end{align}
 Rather than sending the top $T$ entries of each gradient, we then send $T$ components on average. 
 %Moreover, we should  should and send large components with higher probability than smaller once.  
%Here $ \bar{Q}_{B,p}(\cdot)$ is the stochastic sparsification with budged $B$ and probabilities $p$, defined component wise as
We can achieve this by setting
\begin{align} \label{eq:StochSpar}
    [{Q}_{T,p} \left( g \right)]_j = ({g_j}/{p_j})\xi_j , 
\end{align}
where $\xi_j \sim \texttt{Bernouli}(p_j)$ and 
%\begin{align*}
\(T = \sum_{j=1}^d p_j.\)
%\end{align*}
 Ideally, we would like $p_j$ to represent the magnitude of $g_j$, so that if $|g_j|$ is large relative to the other entries then $p_j$ should also be large. 
 There are many heuristic methods to choose $p_j$.  For example, if we set $p_j = |g_j|/\|  g \|_q$ with $q=2$, $q=\infty$, and $q\in(0,\infty]$ then we get, respectively, the stochastic sparsifications in~\cite{alistarh2017qsgd} with $s=1$, the \emph{TernGrad} in \cite{wen2017terngrad}, and $\ell_q$-quantization in \cite{wang2018atomo}.
 We can also optimize $p$, see~\cite{wang2018atomo} and Section~\ref{sec:DynSS} for  details.

 %alistarh2017qsgd,wen2017terngrad,wang2018atomo

  Experimental results have shown  huge communication savings by
  compressed gradient methods in large-scale machine learning~\cite{shi2019distributed,ijcai2019-473}.  
  Nevertheless, we can easily create pedagogical examples where they are no more communication efficient than full gradient descent. % Sparsified gradient methods can indeed be less communication efficient than full gradient methods, as discussed in the previous section.  
 %This can by illustrated with simple examples, e.g. consider
 For sparsification, consider the function
% Consider, for example the function
 $F(x) = ||x||^2/2$.
 Then, gradient descent with step-size $\gamma=1$ converges in one iteration, and thus communicates only $d$ floating points (one for each element of $\nabla F(x)\in \mathbb{R}^d$)  to reach any $\epsilon$-accuracy.  
% That is  floating points if we use double-precision floating points. 
 On the other hand, $T$-sparsified gradient descent (where $T$ divides $d$) needs $d/T$ iterations, which implies $d$ communicated floating points in total. In fact, the sparsified method is even worse since it requires an additional  $d\log(d)$ communicated bits to indicate the sparsity pattern. 
 
 %for $T$-SGD we  communicate additional $d\log(d)$ bits to indicate the sparsification pattern. 
  This example shows that the benefits of sparsification are not seen on worst-case problems, and that traditional worst-case analysis (\emph{e.g.}~\citet{khirirat2018gradient}) is unable to guarantee improved communication complexity. Rather, sparsification is useful for exploiting structure that appears in real-world problems. 
  The key in exploiting this structure is to choose $T$ properly at each iteration. %One might take $T$ as an hyperparameter that can be tuned by cross-validation. However, that might be expensive. 
  In this paper %take a different approach and 
  we describe how to choose $T$ dynamically to optimize the communication efficiency of sparsification.

\subsection{Communication Cost: Bits, Packets, Energy and Beyond}

 The compressors discussed above have a tuning parameter $T$, which controls the sparsity budget of the compressed gradient. 
 Our goal is to tune $T$ adaptively to optimize the communication efficiency. To explain how this is done, we first need to discuss how to model the cost of communication. %in% communicating a $T$-compressed gradient. 
 Let $C(T)$ denote the  communication cost per iteration as a function of $T$, e.g.,  total number of transmitted bits. %  per iteration. % as a function of the parameter  $T$.
%
 %To optimize the communication efficiency of sparsification, we must first discuss the amount of communication involved. 
 % The goal of compression is to improve the communication effiency of gradient methods and thereofore it is worthwile to disucss the 
 Then, $C(T)$ consists of payload (actual data) and communication overhead. 
 The payload is the number of bits required to communicate the compressed gradient. 
 For the sparsification  in Eq.~\eqref{eq:T_sparse_operator} and the quantization in Eq.~\eqref{eq:T_sparse_quant}, the payload consumes, respectively, 
   \begin{equation}  \label{eq:Payload_SQ}
    P^\texttt{S}(T)=T \times ( \lceil \log_2(d) \rceil + \texttt{FPP}) ~~ \text{bits} ~~\text{ and }~~ P^{\texttt{SQ}}(T) = \texttt{FPP}+ T \times \lceil \log_2(d) \rceil~~\text{ bits},
 \end{equation} 
 where the $\log_2(d)$ factor comes from indicating $T$ indices in the $d$ dimensional gradient vector. Here 
 $\texttt{FPP}$ is our floating point precision, e.g., $\texttt{FPP}=32$ or $\texttt{FPP}=64$ for, respectively, single or double precision floating-points.
 %
% , respectively,  the sparsification in Eq.~\eqref{eq:T_sparse_operator} and the sparsification with quantization in Eq.~\eqref{eq:T_sparse_quant}  the payload consumes 
%  \begin{equation}  \label{eq:Payload_SQ}    P^\texttt{S}(T)=T \times ( \lceil \log_2(d) \rceil + \texttt{FPP}) ~~ \text{bits} ~~\text{ and }~~ P^{\texttt{SQ}}(T) = \texttt{FPP}+ T \times \lceil \log_2(d) \rceil~~\text{ bits},
% \end{equation}  
 %
 %
 %
 %the sparsification in Eq.~\eqref{eq:T_sparse_operator}  the payload consumes
%\begin{equation}  \label{eq:Paylod_SG} %{eq:COMM2}
%  P^\texttt{S}(T)=T \times ( \lceil \log_2(d) \rceil + \texttt{FPP}) ~~ \text{bits}
%\end{equation}
 %since we transmit $T$ floating points and indicate $T$ indices in a $d$ dimensional gradient vector. Here 
 %$\texttt{FPP}$ is our floating point precision, e.g., $\texttt{FPP}=32$ or $\texttt{FPP}=64$ for, respectively, single or double precision floating-points. % format preprecision
 %
 %For the sparsification with quantization in Eq.~\eqref{eq:T_sparse_quant}  the payload consumes 
% \begin{equation}  \label{eq:Payload_SQ}
%    P^{\texttt{SQ}}(T) = \texttt{FPP}+ T \times \lceil \log_2(d) \rceil~~\text{ bits},
% \end{equation}     
 %since only one floating point is sent per iteration.
 \added{Our simplest communication model accounts only for the payload, i.e., $C(T)=P(T)$. 
 %\vspace{-0.2cm}
% \begin{align*}
%     C(T) &= P(T).
% \end{align*}
 We call this the \emph{payload model}.}
% 
 %In addition to the payload $P_{\texttt{S}}$, in all real-world networks, each communication also includes overhead bits. % in addition to the payload. 
% We denote the actual amount communicated can be  denoted by 
%
%
In real-world networks, however, each communication also includes overhead and set-up costs. 
% Before continuing, it is useful to review concrete instances of the communication function $C(T)$.
% Probably the simplest communication function is \replaced{to account only for the payload, \emph{i.e.}
% \begin{align*}
%     C(T) &= P(T).
% \end{align*}
% We call this the \emph{payload model}. A more realistic communication is
% }{
% a linear model}
A more realistic model is therefore affine
 %\begin{equation}
     $C(T)=c_1 P(T)+c_0,$ % \label{eq:COMM1}
 %\end{equation}
 where $P(T)$ is the payload. Here $c_0$ is the communication overhead while $c_1$ is the cost of transmitting a single payload byte.  For example, if we just count transmitted bits ($c_1=1$), a single UDP packet transmitted over Ethernet requires an overhead of $c_0=54\times 8$ bits and can have a payload of up to $1472$ bytes. In the wireless standard IEEE 802.15.4, the overhead ranges from $23$-$82$ bytes, leaving $51-110$ bytes of payload before the maximum packet size of $133$ bytes is reached~\cite{KoS:17}. 
 Another possibility is to use a \emph{packet model}, \emph{i.e.} to have a fixed cost per packet
% 
 %Here the $c_1$ and $c_0$ capture, respectively, the dependent and independent communication overheads in sending a message. %The dependent overhead might relate to redundancies related do channel coding and the independent 
 %
 %the communication overhead that is, respectively, dependent and independent on the number of payload bits. For example, 
 %The linear model neglects the fact that data is sent in packets. So an alternative model is
 %
 %is simply to have a communication cost which is proportional to the number of packet sent. 
 %
 %However, the linear model is agnostic of one of the key characteristics of digital communications, that messages are sent in packets. 
 %A more realistic communication cost is
\begin{equation}
    C(T)= c_1  \times \left\lceil {P(T)}/{P_{\max}} \right\rceil+ c_0,  \label{eq:COMM2}
\end{equation}
 where $P_{\max}$ is the number of payload bits per packet. The term $\lceil P(T)/P_{\max} \rceil$ counts the number of packets required to send the $P(T)$ payload bits, $c_1$ is the cost per packet, and $c_0$ is the cost of initiating the communication.  
\iffalse
 With this model we can capture many real-world communication networks. 
 For example, for Ethernet (the IEEE 802.3 standard) the payload $P_{\max}$ per packet can be between 46-1500 bytes and the overhead per packet is $22$ bytes, see Chapter 5.4.2 in~\cite{layer2017computer}. % (14 byte MAC address and 4 byte checksum) and 
 Therefore, our model covers Ethernet communications by setting
 $c_1=P_{\max} + 22 ~\text{bytes}$.
 %For lower power wireless we might use the IEEE 802.15.4 standard. Here the same values would be $P_{\max}=1016$ bits and $c_1=1016+
 % Other types of communication functions can be considered as well.
  {\color{red}   Provide numbers for 802.15.4 and reference. 
 }
\fi
  These are just two examples; ideally, $C(T)$ should be tailored to the specific communication standard used, and possibly even estimated from system measurements.
  %In general, we may construct $C(T)$ by looking at the communication standards that we use each time. 

\subsection{Key Idea: Communication-aware Adaptive Tuning (CAT)}

 %Let $C(T)$ denote the actual amount of bits communicated when we perform $T$-sparsification, that is payload + overhead. 
 %
 %Once we know $C(T)$ (how the communication bits depend on $T$) 
  %we can try to
 %then we can try to optimize the communication efficiency.
 
 When communicating the compressed gradients we would 
like to use each bit as efficiently as possible. 
 In optimization terms, we would like the  objective function improvement for each communicated bit to be as large as possible.
 In other words, we want to maximize the ratio
 \begin{equation} \texttt{Efficiency}(T)=\frac{\texttt{Improvement}(T)}{C(T)}, \label{eq:Efficiency}
 \end{equation}
 where $\texttt{Improvement}(T)$ is the improvement in the objective function when we use $T$-sparsification with the given compressor.
 We will demonstrate how the value of $\texttt{Improvement}(T)$ can be obtained from novel descent lemmas 
% \footnote{we provide more concrete definition of improvement in the subsequent sections.} 
% Then $\texttt{Efficiency}(T)$ is the objective function improvement per communicated bit when we use $T$-sparsification.
 and derive dynamic sparsification policies which, at each iteration, find the $T$ that optimizes $\texttt{Efficiency}(T)$. We derive optimal $T$-values for the three  compressors and two communication models introduced above. However, the idea is general and can be used to improve the communication efficiency for many other compression optimization algorithms.

\section{Dynamic Sparsification}
We begin by describing how our CAT framework applies to sparsified gradient methods. To this end, the following lemma introduces a useful measure $\alpha(T)$ of function value improvement: 
 \begin{lemma}\label{lemma:Descent_sparse}
   Suppose that $F:\R^d\rightarrow \R$ is (possibly non-convex) $L$-smooth and $\gamma=1/L$. Then for any $x,x^+\in\R^d$ with
    $x^+=x-\gamma Q_T(\nabla F(x))$
   we have
   $$ F(x^+)\leq F(x) - \frac{\alpha(T)}{2L} ||\nabla F(x)||^2 ~~~~~
   \text{ where } ~~~~~
   \alpha(T)=\frac{||Q_T(\nabla F(x))||^2}{||\nabla F(x) ||^2}.$$
  Moreover, there are $L$-smooth functions $F(\cdot)$ for which the inequality is tight for every $T=1,\ldots,d$.
  %for any given $T$, the inequality holds with equality, e.g., if $F(x)=(1/2)\sum_{i=1}^T x_i^2$. 
 \end{lemma}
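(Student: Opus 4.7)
The plan is to apply the standard descent lemma for $L$-smooth functions and then exploit the specific algebraic structure of the top-$T$ sparsification operator defined in Eq.~\eqref{eq:T_sparse_operator}. Writing $g=\nabla F(x)$ for brevity, the $L$-smoothness of $F$ gives
\begin{equation*}
F(x^+)\leq F(x)+\langle g,\,x^+-x\rangle+\frac{L}{2}\|x^+-x\|^2.
\end{equation*}
Substituting $x^+-x=-\gamma Q_T(g)=-\tfrac{1}{L}Q_T(g)$ reduces the right-hand side to $F(x)-\tfrac{1}{L}\langle g,Q_T(g)\rangle+\tfrac{1}{2L}\|Q_T(g)\|^2$.

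The key observation is then the identity $\langle g,Q_T(g)\rangle=\|Q_T(g)\|^2$, which follows directly from the definition of $Q_T$: the operator selects the index set $I_T(g)$ and zeros out the remaining entries, so the inner product equals $\sum_{j\in I_T(g)} g_j^2=\|Q_T(g)\|^2$. Substituting this identity yields
\begin{equation*}
F(x^+)\leq F(x)-\frac{1}{2L}\|Q_T(g)\|^2 = F(x)-\frac{\alpha(T)}{2L}\|g\|^2,
\end{equation*}
after multiplying and dividing by $\|g\|^2$ to introduce $\alpha(T)$.

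For the tightness claim, I would simply exhibit the quadratic $F(x)=\tfrac{L}{2}\|x\|^2$. This function is $L$-smooth and, crucially, the $L$-smoothness descent inequality is an equality for \emph{any} displacement direction because $F$ is quadratic with constant Hessian $L I$. Moreover, the identity $\langle g,Q_T(g)\rangle=\|Q_T(g)\|^2$ holds with equality for \emph{every} $T$, independently of the function. Chaining these two equalities reproduces the claimed bound with equality for every $T=1,\ldots,d$ and every $x$.

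I do not anticipate a serious obstacle: the only non-routine step is recognizing the identity $\langle g,Q_T(g)\rangle=\|Q_T(g)\|^2$, but this is immediate from the coordinate definition of $Q_T$. The most delicate part is really the tightness statement, since one needs a single function that simultaneously achieves tightness for all $T$; the quadratic works precisely because tightness of the descent inequality for a quadratic is independent of the perturbation direction.
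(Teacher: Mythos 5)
Your proposal is correct and follows essentially the same route as the paper's proof: apply the standard $L$-smoothness descent inequality, use the identity $\langle g, Q_T(g)\rangle = \|Q_T(g)\|^2$ that holds because $Q_T$ only zeroes out coordinates, and verify tightness on the quadratic $F(x)=\tfrac{L}{2}\|x\|^2$, for which both inequalities in the chain are equalities. No gaps.
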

  This lemma 
  is in the category of descent lemmas, %similar to Lemma~\ref{lemma:Descent_sparse}  are a 
  which are  standard tools to study the convergence for convex and non-convex functions. In fact, Lemma~\ref{lemma:Descent_sparse} generalizes the standard descent lemma for $L$-smooth functions (see, e.g., in Proposition A.24 in~\cite{nonlinear_bertsekas}).
  In particular, % if the gra
%
%  they indicate the the improvement in the objective function based on the search direction.  
  %
 if the gradient $\nabla F(x)$ is  $T$-sparse (or $T=d$) then Lemma~\ref{lemma:Descent_sparse} gives the standard descent
 $$ F(x^+)\leq F(x) - \frac{1}{2L} ||\nabla F(x)||^2.$$
 
 In the next subsection, we will use Lemma~\ref{lemma:Descent_sparse} to derive novel convergence rate bounds for sparsified gradient methods, extending many standard results for gradient descent. %Subsection~\ref{sec:SGD_Illustration}). %   result is a basic tool to derive converge rate for gradient methods for strongly-convex, convex, and non-convex function.
First, however, we will use $\texttt{Improvement}(T)=\alpha(T)$ to define the following CAT mechanism for dynamic sparsification:
\begin{align}
  \textbf{Step 1: }  T^i=\underset{T\in[1,d]}{\text{argmax}}~~ \frac{\alpha^i(T)}{C(T)} %\label{eq:Alg1_G} \\
    ,~~~~~~~  \textbf{Step 2: }  x^{i+1}=x^i -\frac{1}{L} Q_{T^i}(\nabla F(x^i)). \label{eq:Alg1_G}
\end{align} 
 The algorithm first 
 %In the first step, described by Equation~\eqref{eq:Alg1_G}, the algorithm 
 finds the sparsity budget $T^i$ that optimizes the communication efficiency defined in (\ref{eq:Efficiency}), and then
 % In the second step (Equation~\ref{eq:Alg1_G}), 
performs a standard sparsification using this value of $T^i$. 
Since $\Vert \nabla F(x)\Vert^2/2L$ is independent of $T$, maximizing $\alpha(T)/C(T)$ also maximizes efficiency. 

To find $T^i$ at each iteration we need to solve the maximization problem in Eq.~\eqref{eq:Alg1_G}. This problem has one dimension, and even a brute force search would be feasible in many cases. As the next two results show, however, the problem has a favourable structure that allows the maximization to be solved very efficiently. 
The first result demonstrates that 
the descent always increases with $T$ and is bounded. \deleted{as follows.} %, as illustrated in the following lemma.
 \begin{lemma} \label{Lemma:LB_alpha}
   For $g\in \R^d$  the function \(\alpha(T)={||Q_T(g)||^2}/{||g||^2}\)
   is increasing and concave when extended to the continuous interval $[0,d]$.
   Moreover, $\alpha(T)\geq {T}/{d}$ for all $T\in\{0,\ldots d\}$ and there
   exists an $L$-smooth function such that ${||Q_T(\nabla f(x))||^2}/{||\nabla f(x)||^2}=T/d$ for all $x\in\R^d$.
  % 
  % 
  %  and $\alpha(T)$ is concave when extended to the the continuous interval $[0,d]$.
 \end{lemma}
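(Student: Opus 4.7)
My plan is to reduce everything to the sorted magnitudes of $g$. Let $|g_{(1)}| \geq |g_{(2)}| \geq \cdots \geq |g_{(d)}|$ be the entries of $g$ arranged in decreasing order of absolute value, so that for every integer $T \in \{0,\ldots,d\}$,
$$\|Q_T(g)\|^2 = \sum_{j=1}^{T} g_{(j)}^2.$$
This makes $\alpha$ on the integers a sum of non-negative quantities divided by $\|g\|^2$, hence immediately non-decreasing, with successive increments $\alpha(T{+}1)-\alpha(T) = g_{(T+1)}^2/\|g\|^2$. Because the sorted squares are themselves non-increasing in $T$, these increments are non-increasing, which is exactly discrete concavity. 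The extension to the continuous interval $[0,d]$ is by piecewise-linear interpolation between consecutive integer values; linear interpolation preserves both monotonicity and (discrete) concavity, so the resulting function is increasing and concave on $[0,d]$.

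For the lower bound $\alpha(T)\geq T/d$, I would argue by averaging: the top-$T$ squared entries of $g$ have average at least the average over all $d$ entries, giving
$$\sum_{j=1}^{T} g_{(j)}^2 \;\geq\; \frac{T}{d}\sum_{j=1}^{d} g_{(j)}^2,$$
which is the claimed inequality. Dividing by $\|g\|^2$ yields $\alpha(T) \geq T/d$ on integers, and the piecewise-linear extension inherits the bound on $[0,d]$ because the linear segments lie above the linear function $T/d$ whenever the endpoints do.

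For tightness, I would exhibit the simplest possible witness: a linear objective $f(x) = c\,\boldsymbol{1}^{\top} x$ for any constant $c\neq 0$. Every such $f$ is $L$-smooth for any $L \geq 0$, and its gradient $\nabla f(x) = c\boldsymbol{1}$ has $d$ entries all of equal magnitude at every point $x$. Therefore $\|Q_T(\nabla f(x))\|^2 = T c^2$ while $\|\nabla f(x)\|^2 = d c^2$, so the ratio is exactly $T/d$ uniformly in $x$ and $T$. If an example with strictly positive smoothness constant is desired, perturbing by a tiny isotropic quadratic preserves the equal-magnitude property only at the origin, so the purely linear construction is the natural choice.

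The only mildly delicate step is justifying that the piecewise-linear interpolation really is concave on $[0,d]$ rather than merely concave on the integer grid. This is standard but worth spelling out: concavity on $[0,d]$ for a piecewise-linear function is equivalent to the slopes being non-increasing from one segment to the next, which is exactly the condition $g_{(T+1)}^2 \geq g_{(T+2)}^2$ that the sorting guarantees. Everything else is bookkeeping.
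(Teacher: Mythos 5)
Your proof is correct and, for the first three claims, follows essentially the same route as the paper: sort the magnitudes, observe that the increments $g_{(T+1)}^2/\|g\|^2$ of the piecewise-linear extension are non-negative and non-increasing (giving monotonicity and concavity), and obtain $\alpha(T)\geq T/d$ by the averaging inequality $\sum_{j=1}^{T}g_{(j)}^2\geq \frac{T}{d}\|g\|^2$, which is just a cleaner phrasing of the paper's bound $\|g\|^2\leq\bigl(1+\frac{d-T}{T}\bigr)\sum_{j\in I_T(g)}g_j^2$. The one genuine divergence is the tightness witness: you take the linear function $f(x)=c\,\boldsymbol{1}^{\top}x$, whereas the paper uses the rank-one quadratic $F(x)=\frac{1}{2}x^{\top}\bigl(\frac{L}{d}\boldsymbol{1}\boldsymbol{1}^{\top}\bigr)x$, whose gradient $L\bar{x}\boldsymbol{1}$ likewise has all entries equal. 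Both are valid; yours is simpler and has the advantage that the gradient never vanishes, so the ratio is well-defined at every $x$ (the paper's ratio degenerates to $0/0$ on the hyperplane $\bar{x}=0$), while the paper's choice has the merit of being a nontrivial optimization instance that also realizes the worst-case $d/T$ iteration count used elsewhere in their complexity table.
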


Lemma \ref{Lemma:LB_alpha} results in many consequences in the next section, but first we make another observation:
\begin{proposition} \label{prop:alpha_over_c}
Let $\alpha(T)$ be increasing and concave. If $C(T)=\tilde{c}_1 T + c_0$, then $\alpha(T)/C(T)$ is quasi-concave and has a unique maximum on $[0,d]$. 
%If $c_0=0$ and $\tilde{c}_1>0$, this maximum is attained for $T=1$. Finally, when 
When $C(T)= \tilde{c}_1 \lceil T / \tau_{\max} \rceil + c_0 $, on the other hand, $\alpha(T)/C(T)$ attains its maximum for a $T$ which is an integer multiple of $\tau_{\max}$.
\end{proposition}

This proposition shows that the optimization in Eq.~\eqref{eq:Alg1_G} is easy to solve. For the affine communication model, one can simply sort the elements in the decreasing order, initialize $T=1$ and increase $T$ until $\alpha(T)/C(T)$ decreases. In the packet model, the search for the optimal $T$ is even more efficient, as one can increase $T$ in steps of $\tau_{\max}$.

    \renewcommand{\arraystretch}{1.4}
    \addtolength{\tabcolsep}{-0.9pt}
   \begin{table}[t]
\caption{Iteration complexity for $T$-sparsified (stochastic) gradient descent. We prove these results and analogue results for $S+Q$ compression in Appendix~\ref{app:Table1}.
  %}{
  %We prove these results and provide analogical results for  the S+Q compression
  %(from Section~\ref{sec:DynSQ}) in Appendix~\ref{app:Table1}.}
  \textbf{Deterministic Sparsification:} $A_{\epsilon}^{\texttt{SC}}$,
  $A_{\epsilon}^{\texttt{C}}$, and $A_{\epsilon}^{\texttt{NC}}$ are standard upper
  bounds for gradient descent on the number of iterations needed to achieve
  $\epsilon$-accuracy for, respectively, strongly-convex, convex, and non-convex
  problems.
  % \footnote{By $\epsilon$-accurate solution we mean $x^k$ such that
  % $F(x^k)-F^{\star}<\epsilon$  for convex problems and $||\nabla F(x^k)||^2 \leq
  % \epsilon$ for non-convex problems.}
  In particular, $A_{\epsilon}^{\texttt{SC}} =
  \kappa\log(\epsilon_0/{\epsilon})$, $A_{\epsilon}^{\texttt{C}}=2LR^2/\epsilon$,
  $A_{\epsilon}^{\texttt{NC}}=2L\epsilon_0/\epsilon$, where $\kappa=L/\mu$, $\epsilon_0
  = F(x^0)-F^{\star}$, and $R$ is a constant such that $||x^i-x^{\star}|| \leq R$
  (for some optimizer $x^{\star}$).
  \textbf{Stochastic Sparsification:} $B_{\epsilon}^{\texttt{SC}}$,
  $B_{\epsilon}^{\texttt{C}}$, and $B_{\epsilon}^{\texttt{NC}}$ are standard upper
  bounds for multi-node stochastic gradient descent on the number of iterations
  needed to achieve $\epsilon$-accuracy (in expected value) for, respectively, strongly-convex, convex, and non-convex problems. In particular,
  $B_{\epsilon}^{\texttt{SC}} = 2 (1+2\sigma^2/(\mu\epsilon
  L))(A_{\epsilon}^{\texttt{SC}} + \delta)$, $B_{\epsilon}^{\texttt{C}}
  = 2 (1+2\sigma^2/(\epsilon L)) A_{\epsilon}^{\texttt{C}}$, and
  $B_{\epsilon}^{\texttt{NC}}= 2(1+2\sigma^2/\epsilon)A_{\epsilon}^{\texttt{NC}}$,
  where $\delta = \kappa \log(2)$ and $\sigma^2$ is a variance bound of stochastic
  gradients. The $\epsilon$-accuracy is measured in $\mathbf{E}[F(x)-F(x^\star)]$ for convex problems, and in $\mathbf{E}\| \nabla F(x) \|^2$ otherwise. %(with the expected value omitted in the deterministic case).
  }
\vskip 0.15in
\begin{center}
\begin{small}
\begin{sc}
\begin{tabular}{lcccccc}
\toprule
& \multicolumn{3}{c}{Deterministic Sparsification} & \multicolumn{3}{c}{Stochastic Sparsification} \\
 Upper-Bound    & $\mu$-convex & convex & nonconvex     & $\mu$-convex & convex & nonconvex    \\
%\midrule
%$\epsilon$-measure    & $F(x^i){-}F^{\star}$& $F(x^i){-}F^{\star}$& $||\nabla F(x^{i^{\star}})||^2$ \\
\midrule

% \multicolumn{4}{l}{Iteration Complexity:} \\

No-Compression & $A_{\epsilon}^{\texttt{SC}}$ & $A_{\epsilon}^{\texttt{C}}$ & $A_{\epsilon}^{\texttt{NC}}$
& $B_{\epsilon}^{\texttt{SC}}$ & $B_{\epsilon}^{\texttt{C}}$ & $B_{\epsilon}^{\texttt{NC}}$
%$\frac{2}{\bar{\omega}_T} \left( 1{+}\frac{2\sigma^2}{\epsilon L} \right) \frac{2L\epsilon_0}{\epsilon}$

\\
Data-Dependent & $ \textcolor{blue}{\frac{1}{\bar{\alpha}_T}} A_{\epsilon}^{\texttt{SC}}$ &  $\textcolor{blue}{\frac{1}{\bar{\alpha}_T}}A_{\epsilon}^{\texttt{C}}$ & $\textcolor{blue}{\frac{1}{\bar{\alpha}_T}} A_{\epsilon}^{\texttt{NC}}$ & $\textcolor{blue}{\frac{1}{\bar{\omega}_T}} B_{\epsilon}^{\texttt{SC}}$ & $\textcolor{blue}{\frac{1}{\bar{\omega}_T}} B_{\epsilon}^{\texttt{C}}$ & $\textcolor{blue}{\frac{1}{\bar{\omega}_T}} B_{\epsilon}^{\texttt{NC}}$

\\

 Worst-Case
  &  $\textcolor{red}{\frac{d}{T}}A_{\epsilon}^{\texttt{SC}}$ & $\textcolor{red}{\frac{d}{T}}  A_{\epsilon}^{\texttt{C}}$  & $\textcolor{red}{\frac{d}{T}}  A_{\epsilon}^{\texttt{NC}}$  &
  $\textcolor{red}{\frac{d}{T}} B_{\epsilon}^{\texttt{SC}}$ & $\textcolor{red}{\frac{d}{T}}B_{\epsilon}^{\texttt{C}}$ & $\textcolor{red}{\frac{d}{T}} B_{\epsilon}^{\texttt{NC}}$
  \\

\bottomrule
\end{tabular}
\end{sc}
\end{small}
\label{Tab:SDG_IC}
\end{center}
\vskip -0.2in
\end{table} 

\subsection{Dynamic sparsification benefits in theory and practice}\label{subsec:benefitDynamicSparsifier}
 
 Although the CAT framework applies to general communication costs, it is
 instructive to see what our results say about the communication complexity,
 i.e., the number of bits that need to be communicated to guarantee that a solution is found with $\epsilon$-accuracy. Table~\ref{Tab:SDG_IC} compares the iteration complexity of Gradient Descent (GD) in row 1 and $T$-Sparsified Gradient Descent ($T$-SGD) in rows 2 and 3 with constant $T$ for strongly-convex, convex, and non-convex problems. The results for gradient descent are well-known and found in, e.g.,~\cite{nesterov2018lectures}, while the worst-case analysis is from~\cite{khirirat2018gradient}. % and can also be obtained by our descent Lemma~\ref{lemma:Descent_sparse} together with the lower bound $\alpha(T)\geq T/d$ in Lemma~\ref{Lemma:LB_alpha}. 
 The results for $T$-sparsified gradient descent are derived using Lemma~\ref{lemma:Descent_sparse} instead of the standard descent lemma; see proofs in the supplementary.  %The $\epsilon$-accuracy is measured in $\mathbf{E}[F(x)-F^{\star}]$ for strongly-convex and convex problems, and in $\mathbf{E}\| \nabla F(x) \|^2$
% for non-convex problems (with the expected value omitted in the deterministic case). 
 %We provide proofs in the supplementary material.
% 
% 
  \begin{figure*}
    \centering
    \begin{subfigure}[t]{0.3\textwidth}
        \includegraphics[width=\textwidth]{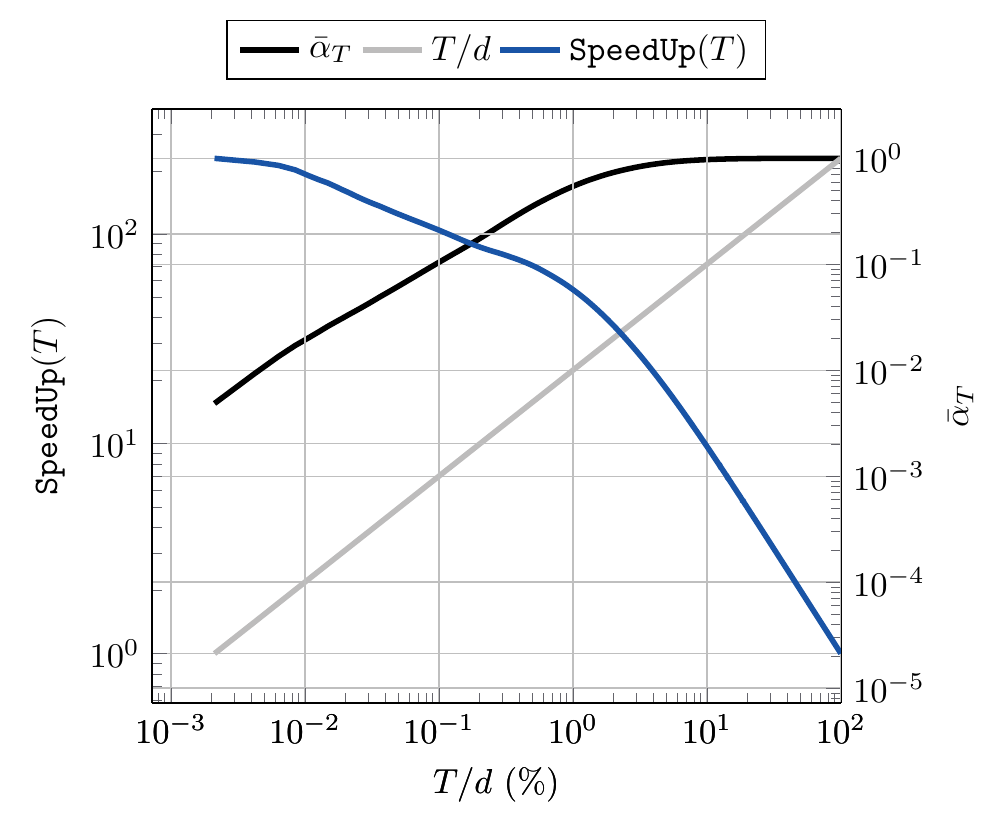}
        \caption{$\bar{\alpha}_T$ and speedup}
        \label{fig:SGD_Alpha}
    \end{subfigure}
    ~ %add desired spacing between images, e. g. ~, \quad, \qquad, \hfill etc. 
      %(or a blank line to force the subfigure onto a new line)
          \begin{subfigure}[t]{0.3\textwidth}
        \includegraphics[width=\textwidth]{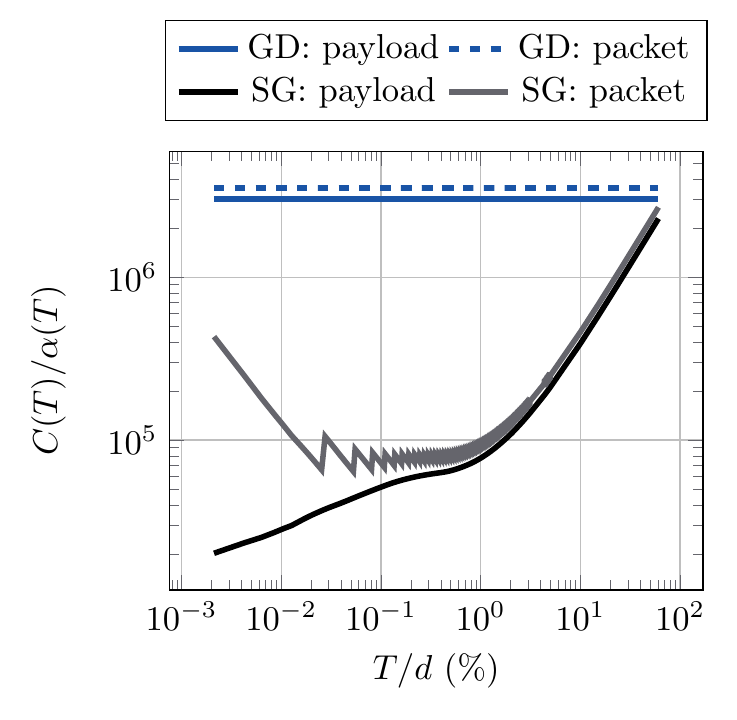}
        \caption{Hypothetical: Communications to reach $\epsilon$-accuracy}
        \label{fig:SGD_TCC}
        
    \end{subfigure}
    ~
    \begin{subfigure}[t]{0.3\textwidth}
        \includegraphics[width=\textwidth]{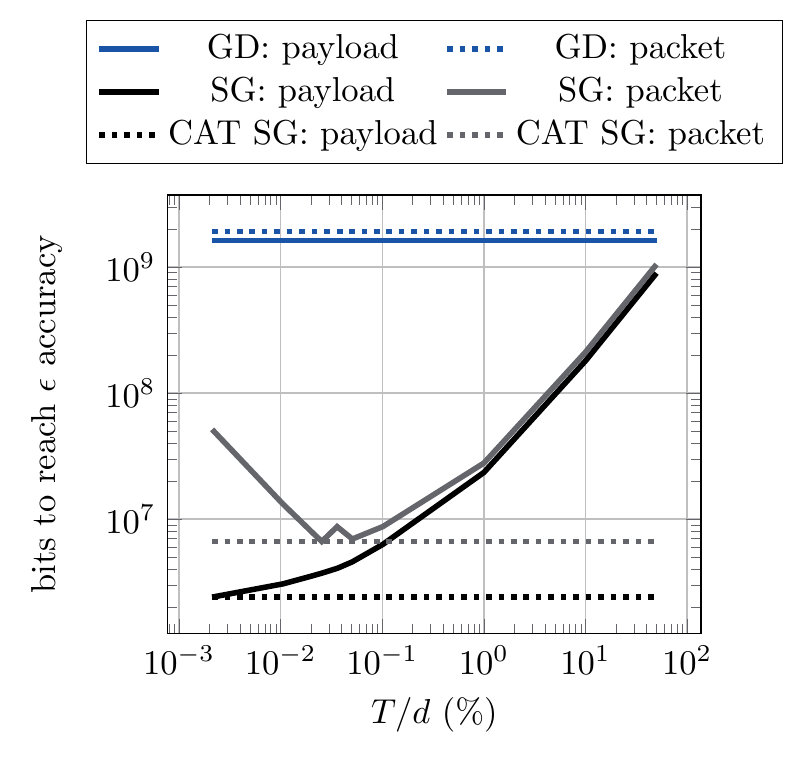}
        \caption{Experiments: Communications to reach $\epsilon$-accuracy}
        \label{fig:SGD_Experiments}
    \end{subfigure}
    %~ %add desired spacing between images, e. g. ~, \quad, \qquad, \hfill etc. 
    %(or a blank line to force the subfigure onto a new line)
    \caption{CAT sparsified gradient descent on the \texttt{RCV1} data set. %with   $697,641$ data points and $47,236$ features.  
    }\label{fig:animals}
\end{figure*}
% 
%Before illustrating the benefits of dynamic sparsification it is useful to ask: \emph{why is sparsified gradient method communication efficient?} 
%   To answer this question it is most natural to compare sparsified gradient with full gradient descent without sparsification. 
%   
%   we should compare sparsified gradient with full gradient method where the whole gradient is communicated at every iteration.  
 % It is useful to start by looking at why sparsification can be communication efficient? 
 %
 %
 Comparing rows 1 and 3 in the table, we see that the worst-case analysis does not guarantee any improvements in the amount of communicated floating points. Although $T$-SGD only communicates $T$ out of $d$ gradient entries in each round, we need to perform $d/T$ times more iterations with $T$-SGD than with SGD, so the two approaches will need to communicate the same number of floating points. In fact, $T$-SGD will be worse in terms of communicated bits since it requires $T\lceil \log_2(d) \rceil$ additional bits per iteration to indicate the sparsity pattern.  %(see Eq.~\eqref{eq:Paylod_SG}).
  
 Let us now turn our attention to our novel analysis shown in row 2 of Table~\ref{Tab:SDG_IC}. Here, 
 the parameter $\bar{\alpha}_T$ is a lower bound on $\alpha^i(T)$ over every iteration, that is
 $$\alpha^i(T) \geq  \bar{\alpha}_T ~~\text{ for all } i. $$
 Unfortunately, $\bar{\alpha}_T$ is not useful for algorithm development: we know from Lemma~\ref{Lemma:LB_alpha} that it can be as low as $T/d$, and it is not easy to compute a tight data-dependent bound off-line, since $\bar{\alpha}_T$ depends on the iterates produced by the algorithm. However, $\bar{\alpha}_T$ explains why gradient sparsification is communication efficient. In practice, only few top entries cover the majority of the gradient energy,  
 %tends to be concentrated to a few top entries
  so $\alpha^i(T)$ grows rapidly for small values of $T$ and is much larger than $T/d$. 
  % and hence it is not useful 
%  However, it is helpful in explaining why gradient sparsification is communication efficient. 
  %From Lemma~\ref{Lemma:LB_alpha} we know that $\bar{\alpha}_T\geq T/d$, which gives us the worst case iteration complexity in row 5 ($T$-SPG WC). %We show that these bounds are tight for  some problems in Subsection~\ref{Sec:Example}.
  \iffalse
  The first interesting conclusion that we can draw from the table is that in general sparsification provides no communication efficiency compared to full gradient. We see this by comparing rows 3 and 5 in the table. We need (in the worst case) to do $d/T$ times more iterations with $T$-SGD than with GD. This means that we communicate the same number of floating points to reach an $\epsilon$ accuracy with $T$-SGD and GD (since we communicate, respectively, $T$ and $d$ floating points per iteration). In fact, using sparsification is even worse since it requires communication of $T\lceil \log_2(d) \rceil$ additional bits per iteration to indicate the sparsity pattern (see Eq.~\eqref{eq:Paylod_SG}).
  %{Sec:GC}
  \fi
%  This comparison might lead us believe that sparsification does not provide communication efficiency to gradient methods. 
%  However, the above result only shows the worst case. % just 
%
  To illustrate the benefits of sparsification, let us look at the concrete example of logistic regression on the standard benchmark data set \texttt{RCV1} (with $d=47,236$ and $697,641$ data points).  
  Figure~\ref{fig:SGD_Alpha} depicts $\bar{\alpha}_T$ computed after running 1000 iterations of gradient descent and compares it to the worst case bound $T/d$. The results show a dramatic difference between these two measures. We quantify this difference by their ratio \vspace{-0.1cm}
  %Note that the ratio between $\bar{\alpha}_T$ and its worst-case value is the 
  %
  %$\bar{\alpha}_T$ and the worst case bound $T/d$. To visualize the improved communication efficiency we also depict the ratio between the row 4 and 5 in the table. 
  %This ratio also tells us the hypothetical speedup of sparsification, i.e., the ratio between the number of communicated floating points needed by GD and $T$-SGD to reach %$\epsilon$-accuracy. %the two bounds for $T$if the $\bar{\alpha}_T$  the following  speed-up measure
  %Formally, the ratio is given by\footnote{This speedup is hypothetical and meant for illustration, the value of $\bar{\alpha}_T$ might change if we follow the the trajectory of $T$-SGD %instead of GD.  We illustrate the acctual speedup in Figure~\ref{fig:SGD_Experiments}}
  \begin{align*}
  \texttt{SpeedUp}(T) &=  \frac{d}{T}\Bigg/ \frac{1}{\bar{\alpha}_T} =\frac{\bar \alpha_T}{T/d}. 
  \end{align*}
 Note that this measure is the ratio between rows 2 and 3 in
 Table~\ref{Tab:SDG_IC}, and hence, tells us the hypothetical speedup by sparsification, i.e., the ratio between the number of communicated floating points needed by GD and $T$-SGD to reach $\epsilon$-accuracy.
  %The $\texttt{SpeedUp}(T)$ is also depicted in Figure 
  The figure shows dramatic speedup; for small values of $T$, it is 3 order of magnitudes (we confirm this  in experiments below).
  
  Interestingly, the speedup  decreases with $T$ and is maximized at  $T=1$. This happens 
  % There is an intuitive explanation for this. 
  because doubling $T$ doubles the amount of communicated bits, while the additional descent is often less significant.
 % 
 % $T=1$ we  most significant gradient information 
 % 
  Thus, an increase in $T$ worsens communication efficiency. 
  %gets worse as $T$ increases. %$T$. %Therefore, if we are only
  This suggests that we should always take $T=1$ if the communication efficiency in terms of bits is optimized without considering overhead.
   In the context of the dynamic algorithm in Eq.~\eqref{eq:Alg1_G}, this leads to the following result:% (proved in the appendix). 

%   
%   
%   we may use this intuition we can prove that if we only consider the communication payload then 
%  
%  Twice 
 % In particular,
 % In fact
  
  \begin{proposition}\label{prop:TisOneSimple}
     Consider the dynamic sparsified gradient algorithm in Eq.~\eqref{eq:Alg1_G} with $C(T)=P^{\texttt{S}}(T)$ given by Eq.~(\ref{eq:Payload_SQ}). 
     Then, the maximization problem~\eqref{eq:Alg1_G} has the solution $T^i=1$ for all $i$.
  \end{proposition}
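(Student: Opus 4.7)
The plan is to reduce the claim to the simple monotonicity statement ``$\alpha(T)/T$ is non-increasing,'' exploiting that the payload cost is purely linear in $T$. First I would observe that, by Eq.~\eqref{eq:Payload_SQ}, the communication cost takes the form
\[ C(T) = P^{\texttt{S}}(T) = c\,T, \qquad c := \lceil \log_2(d)\rceil + \texttt{FPP} > 0, \]
so $c$ does not depend on $T$. Hence the maximization in Step~1 of Eq.~\eqref{eq:Alg1_G} is equivalent to
\[ T^i \in \underset{T\in\{1,\ldots,d\}}{\arg\max}\; \frac{\alpha^i(T)}{T}. \]
Thus it suffices to show that on the integers $\{1,\ldots,d\}$ the map $T \mapsto \alpha^i(T)/T$ is maximized at $T=1$.

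Next I would invoke Lemma~\ref{Lemma:LB_alpha}, which states that $\alpha^i(T) = \|Q_T(\nabla F(x^i))\|^2/\|\nabla F(x^i)\|^2$ is concave on the continuous interval $[0,d]$. By the definition of $Q_T$ in Eq.~\eqref{eq:T_sparse_operator}, $Q_0(\cdot) \equiv 0$, so $\alpha^i(0)=0$. For any concave function $f:[0,d]\to\R$ with $f(0)=0$, the ratio $T \mapsto f(T)/T$ is non-increasing on $(0,d]$: for $0 < s \leq t$, writing $s = (s/t)\,t + (1-s/t)\,0$ and using concavity gives
\[ f(s) \;\geq\; \tfrac{s}{t}\, f(t) + \bigl(1-\tfrac{s}{t}\bigr) f(0) \;=\; \tfrac{s}{t}\, f(t), \]
and dividing by $s > 0$ yields $f(s)/s \geq f(t)/t$. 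Applying this with $f=\alpha^i$ shows $\alpha^i(T)/T$ is non-increasing on $\{1,\ldots,d\}$ and therefore attains its maximum at $T^i = 1$.

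There is essentially no main obstacle here: the entire argument hinges on (i) the linearity of the payload model $P^{\texttt{S}}(T)$ in $T$, which eliminates the constant factor $c$, and (ii) the concavity of $\alpha^i$ together with $\alpha^i(0)=0$, both of which are already supplied by Lemma~\ref{Lemma:LB_alpha} and the definition of the sparsifier. If anything deserves attention, it is flagging why $\alpha^i(0)=0$ and why concavity on the continuous extension is enough to compare integer values; both are immediate. The result also offers a useful sanity check on the framework: under a pure-payload cost with no overhead, the dynamic tuning degenerates to the extreme ``send one coordinate per iteration'' regime, which is precisely the behavior that motivates incorporating the affine or packet-based costs discussed earlier in Section~\ref{sec:DynSS}.
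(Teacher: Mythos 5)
Your proof is correct. It differs from the paper's argument only in how the key monotonicity of $\alpha^i(T)/T$ is established: the paper sorts the gradient entries so that $|g_1|\geq\cdots\geq|g_d|$, writes $\alpha^i(T)/(C\,T)$ explicitly as $\bigl(\sum_{j=1}^{T}g_j^2\bigr)/(C\,T\,\|g\|^2)$, and concludes in one line from the elementary averaging inequality $\frac{1}{T}\sum_{j=1}^{T}g_j^2\leq g_1^2$; you instead invoke the concavity of $\alpha^i$ from Lemma~\ref{Lemma:LB_alpha} together with $\alpha^i(0)=0$ and the standard fact that a concave function vanishing at the origin has a non-increasing ratio $f(T)/T$. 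These are two faces of the same underlying fact (the concavity in Lemma~\ref{Lemma:LB_alpha} is itself proved from the non-increasing slopes $g_M^2/\|g\|^2$), so neither argument is deeper than the other. What your version buys is generality and modularity: it shows that for \emph{any} increasing concave improvement measure with zero value at $T=0$ and a purely linear cost, the CAT rule degenerates to $T=1$, which dovetails nicely with Proposition~\ref{prop:alpha_over_c} and would apply verbatim to other compressors with the same structure. What the paper's version buys is brevity and self-containment --- it needs no appeal to the continuous extension or to the comparison of integer points against an interpolated concave envelope, both of which you correctly note are harmless but do require a word of justification.
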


 Figures~\ref{fig:SGD_TCC} and~\ref{fig:SGD_Experiments}  depict, respectively, the hypothetical and true values of the total number of bits needed to reach an $\epsilon$-accuracy for different communication models. In particular, Figure~\ref{fig:SGD_TCC} depicts the ratio $C(T)/\bar{\alpha}_T$ (compare with Table~\ref{Tab:SDG_IC})
 and Figure~\ref{fig:SGD_Experiments} depicts the experimental results of running $T$-SGD for different values of $T$.
 %, the total number of bits needed to reach $\epsilon$-accuracy (compare with row 4 in Table~\ref{Tab:SDG_IC}). 
 We consider: a) the payload model with $C(T)=P^{\texttt{S}}(T)$ (dashed lines) and b) the packet model in Eq.~\eqref{eq:COMM2} with $c_1=128$ bytes, $c_0=64$ bytes and $P_{\max}=128$ bytes (solid lines). In both cases, the floating point precision is $\texttt{FPP}=64$.  %This ratio shows the total number of bits needed to reach $\epsilon$-accuracy (compare with row 4 in Table~\ref{Tab:SDG_IC}). 
 We compare the results with GD (blue lines) with payload $d\times \texttt{FPP}$ bits per iteration. %\footnote{For fair comparison, we let the payload for gradient descent be $d\times \texttt{FPP}$ per iteration.}
 As expected, % the results show that 
 if we ignore overheads then $T=1$ is optimal, and the improvement compared to GD are of the order of 3 magnitudes.  % communicated bits
% For the Ethernet model
For the packet model, there is a delicate balance between choosing $T$ too small or too big.
For general communication models it is difficult to find the right value of $T$
\emph{a priori},  and the costs of choosing a bad $T$ can be of many orders of magnitude.  To find a good $T$ we could do hyper-parameter search. Perhaps by first estimating $\bar{\alpha}_T$ from data and then use it to find optimal $T$. However, this will be expensive and, moreover,  $\bar{\alpha}_T$ might not be a good estimate of the $\alpha^i(T)$ we get at each iteration. 
 In contrast, our CAT framework  finds the optimal $T$ at each iteration without any hyper-parameter optimization. In Figure~\ref{fig:SGD_Experiments} we show the number of communicated bits needed to reach $\epsilon$-accuracy with our algorithm.  The results show that for both communication models, our algorithm achieves the same communication efficiency as if we would choose the optimal $T$.   

\section{Dynamic Sparsification + Quantization} \label{sec:DynSQ}

% Sparsification together with quantization has been shown to give good practical performance~\cite{alistarh2017qsgd}. In this section we  illustrate how the communication efficiency of these methods can be improved by the  ideas introduced earlier  in the paper.  We focus on the (non-stochastic) gradient sparsification algorithm in~\cite{alistarh2017qsgd}, which  follows the iterations
%\begin{align}\label{Alg:SpQ}
%    x^{i+1}=&x^i -\gamma^i Q_{T^i}^{\texttt{SQ}}(\nabla F(x^i)),
%\end{align} 
%where 
%$$[Q_{T}^{\texttt{SQ}}(g)]_j=\begin{cases}||g|| \sign(g_j) & \text{if } i \in I_T(g) \\ 0 & \text{otherwise}. \end{cases}$$

 We now describe how our CAT framework can improve the communication efficiency of compressed gradient methods that use sparsification combined with quantization, i.e., using $Q_T(\cdot)$ in Equation~\eqref{eq:T_sparse_quant}. 
  %We show in our
% 
 %
 %We show below that this rule works quite well if we only consider the payload, but that it is suboptimal for general communication models.
 %
% \subsection{Descent Lemma for Sparsification + Quantization} % for Algorithm~\eqref{Alg:SpQ}} 
 %%
 %
 %
 As before, our goal is to choose $T^i$  dynamically by maximizing the communication efficiency per iteration defined in (\ref{eq:Efficiency}). This selection can be performed based on the following descent lemma.
 %To that end, we propose the following descent lemma.
 %need a similar descent lemma for this compression as Lemma~\ref{lemma:Descent_sparse} was for the sparsification in the last section. % By similar arguments as in Lemma~\ref{lemma:Descent_sparse}, we obtain the following result. 
  \begin{lemma} 
  
   Suppose that $F:\R^d\rightarrow \R$ is (possibly non-convex) $L$-smooth.
   %%%%%%%%%%%%%%%%%%%%%
   %
   %and $\gamma=1/L$. 
   Then for any $x,x^+\in\R^d$ with
   $ x^+=x-\gamma Q_T(\nabla F(x))$
  where $Q_T(\cdot)$ is as defined in Eq.~\eqref{eq:T_sparse_quant} and  $\gamma=\sqrt{\beta(T)}/(\sqrt{T} L)$ then
     $$ F(x^+)\leq  F(x) - \frac{\beta(T)}{2L} ||\nabla F(x)||^2, \qquad  \text{where} \qquad \beta(T)=  \frac{1}{T} \frac{\langle\nabla F(x),Q_T(\nabla F(x)) \rangle^2}{\Vert\nabla F(x)\Vert^4_2}.$$
%if we choose $\gamma=\sqrt{\beta(T)}/(\sqrt{T} L)$.
 %where
 % \begin{equation} %\label{eqn:betaDef}
 %   \beta(T)=  \frac{1}{T} \frac{\langle\nabla F(x),Q_T(\nabla F(x)) \rangle^2}{\Vert\nabla F(x)\Vert^4_2}.
 %\end{equation}  
   %
  % we have
  % $$ F(x^+)\leq  F(x) - (0.5{\beta(T)}/{L}) ||\nabla F(x)||^2.$$
  % where 
  % $$\alpha(T)=\frac{||Q_T(\nabla F(x))||^2}{||\nabla F(x) ||^2}.$$
%  Moreover, there are $L$-smooth functions $F(\cdot)$ for which the inequality is tight for every $T=1,\ldots,d$.
  %for any given $T$, the inequality holds with equality, e.g., if $F(x)=(1/2)\sum_{i=1}^T x_i^2$. 
\label{lemma:Descent_SpQ}  
 \end{lemma}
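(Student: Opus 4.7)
The plan is to apply the standard smoothness descent inequality and then optimize the step size $\gamma$ to reveal the quantity $\beta(T)$. Writing $g = \nabla F(x)$ and using $L$-smoothness (Prop.~A.24 in~\cite{nonlinear_bertsekas}), we get
\begin{equation*}
F(x^+) \leq F(x) - \gamma \langle g, Q_T(g)\rangle + \tfrac{L\gamma^2}{2}\|Q_T(g)\|^2.
\end{equation*}
The key observation is that for this specific $Q_T(\cdot)$, every nonzero entry has magnitude $\|g\|$, so $\|Q_T(g)\|^2 = T\|g\|^2$ is a closed-form constant, independent of the actual values of the top $T$ entries.

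Next I would substitute this into the bound to get
\begin{equation*}
F(x^+) \leq F(x) - \gamma \langle g, Q_T(g)\rangle + \tfrac{L\gamma^2 T}{2}\|g\|^2,
\end{equation*}
and minimize the right-hand side over $\gamma$. The quadratic in $\gamma$ is minimized at $\gamma^\star = \langle g, Q_T(g)\rangle / (LT\|g\|^2)$. A quick sanity check that this matches the stated step size: since $\langle g, Q_T(g)\rangle = \|g\|\sum_{j\in I_T(g)}|g_j| \geq 0$, we have $\sqrt{\beta(T)} = \langle g,Q_T(g)\rangle/(\sqrt{T}\|g\|^2)$, and therefore $\sqrt{\beta(T)}/(\sqrt{T}L) = \langle g, Q_T(g)\rangle/(LT\|g\|^2) = \gamma^\star$, so the stated $\gamma$ is exactly the minimizer.

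Plugging $\gamma^\star$ back into the quadratic gives the minimum value $-\langle g, Q_T(g)\rangle^2/(2LT\|g\|^2)$. Factoring out $\|g\|^2$ to match the target form, this equals
\begin{equation*}
-\tfrac{1}{2L}\cdot\tfrac{1}{T}\cdot\tfrac{\langle g, Q_T(g)\rangle^2}{\|g\|^4}\cdot \|g\|^2 = -\tfrac{\beta(T)}{2L}\|g\|^2,
\end{equation*}
which yields the claim. There is no real obstacle here: the argument mirrors the usual proof of the standard descent lemma, with the twist that $\|Q_T(g)\|^2$ factors through a constant because of the magnitude-normalization in the S+Q operator, and that the step size is chosen precisely to match the resulting quadratic's vertex. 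The only mild subtlety worth flagging in the write-up is justifying the sign of $\langle g, Q_T(g)\rangle$ so that $\sqrt{\beta(T)}$ is meaningful and the stated $\gamma$ is nonnegative.
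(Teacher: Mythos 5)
Your proof is correct and follows essentially the same route as the paper's: apply the smoothness inequality, use $\|Q_T(g)\|^2 = T\|g\|^2$ and $\langle g, Q_T(g)\rangle = \sqrt{T\beta(T)}\,\|g\|^2$, and plug in the stated step size (which you additionally identify as the minimizer of the resulting quadratic). Your explicit check that $\langle g, Q_T(g)\rangle \geq 0$, which justifies taking the square root without an absolute value, is a small point the paper glosses over.
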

 Since this compression operator affects the descent differently than sparsification, this lemma differs from 
 %This lemma has few differences from  
 Lemma~\ref{lemma:Descent_sparse}, e.g,
 %since the two compressors affect the descent in different ways.
% 
in terms of the step-size and descent measure ($\beta(T)$ vs. $\alpha(T)$). % in this lemma and $\alpha(T)$ in Lemma~\ref{lemma:Descent_sparse}. 
 %In particular, the descent measure $\beta(T)$ is different from $\alpha(T)$ in Lemma~\ref{lemma:Descent_sparse} and the step-size is different. 
%  Both of these can be traced to the fact that
 %Note that unlike Lemma~\ref{lemma:Descent_sparse}, here $\beta$ 
  Unlike $\alpha(T)$ in Lemma~\ref{lemma:Descent_sparse}, $\beta(T)$ does not converge to 1 as $T$ goes to $d$. In fact, $\beta(T)$ is not even an increasing function, 
  and $Q_T(g)$ does not converge to $g$ when $T$ increases. 
  Nevertheless, $\langle \nabla F(x), Q_T(\nabla F(x)) \rangle^2$ is non-negative, increasing and concave. Under the affine communication model, $T\times C(T)=\tilde{c}_0 T^2 + c_1T$ is non-negative and convex, which implies that $\beta(T)/C(T)$ is quasi-concave. The optimal $T$ can then be efficiently found similarly to what was done for the CAT-sparsification in \S~\ref{subsec:benefitDynamicSparsifier}.
  %~\ref{sec:Alpha}. 
  %
  % and we have  $\beta(T)=1$ only if the gradient has exactly $T$ non-zero entries that are all equal in magnitude. 
 %in $Q_T^{\texttt{SQ}}(g)$ does not converge to $g$ as $T$ goes
 %
 %{lemma:Descent_SpQ}
 %Note also that here the step siz
 %Lemma~\ref{lemma:Descent_sparse}
 %
% \subsection{Dynamic Algorithm and Illustrations} \label{subsec:DynamicS+Q_figure}
 %
 Therefore,  Lemma~\ref{lemma:Descent_SpQ} allows us to apply the CAT framework for this compression. 
  %similarly as we did for the sparsification in the previous section. 
  In particular, with
 $\beta^i(T){=} (1/T) \langle\nabla F(x^i),Q_T(\nabla F(x^i)) \rangle^2/\Vert\nabla F(x^i)\Vert^4_2$
  %
  %$$ \beta^i(T)= \frac{1}{T} \frac{\langle\nabla F(x^i),Q_T(\nabla F(x^i)) \rangle^2}{\Vert\nabla F(x^i)\Vert^4_2} ~~~~\text{ and }~~~~ \gamma^i = \frac{\sqrt{\beta(T^i)}}{\sqrt{T^i} L}$$
    we get the algorithm
    %our CAT sparsification + quantization is given by
 %For this sparsification, we measure the gradient information as
 %$$ \beta^k(T)= \frac{\langle\nabla F(x^k),Q_T^{\texttt{SQ}}(\nabla F(x^k)) \rangle}{\Vert\nabla F(x^k)\Vert^2_2}.$$
 %\frac{1}{||\nabla F(x^i)||} \sum_{j\in I_T(\nabla F(x^i))} |\nabla_j F(x^i)|$$
%
%To utilize the measure to reduce overall communication cost for reaching the solution with $\epsilon$-accuracy, we consider gradient descent with dynamic sparsification and quantization. In each iteration, this algorithm finds the optimal number of gradient elements to be communicated according to
\begin{align}\label{eqn:CATS+Q_iteration}
   \textbf{Step 1: } T^i= \underset{T\in[1,d]}{\text{argmax}}~ \frac{\beta^i(T)}{ C(T) }, ~~\textbf{Step 2: } \gamma^i {=} \frac{\sqrt{\beta^i(T^i)}}{\sqrt{T^i} L},~~~
     \textbf{Step 3: } x^{i+1}{=}   x^i {-}\gamma^i  Q_{T^i}(\nabla F(x^i)). 
\end{align} 
 The algorithm optimizes $T^i$ depending on each gradient and the actual communication cost. Note that
  \citet{alistarh2017qsgd} propose a dynamic compression mechanism that chooses $T^i$
  %, namely to 
  %the following heuristic for choosing $T^i$ dynamically:
  %(this rule is agnostic of the communication cost). %at each iteration $i$. 
 %Specifically, they 
 %choose $T^i$ 
 so that $I_{T^i}(g^i)$ is the smallest subset such that  $\sum_{j\in I_{T^i}(g^i)} \vert g^i_j \vert \geq \| g^i \|_2$. However, this heuristic has no clear connection to descent or consideration for communication cost. Our experiments in \S~\ref{sec:Experiments} show that our framework outperforms this heuristic both  in terms of both running time and communication efficiency.  
%We illustrate the algorithm on the \texttt{RCV1} data set in Figure~\ref{fig:SQ_eternet}. We compared CAT to the dynamic tuning introduced in~\cite{alistarh2017qsgd}. The black curves illustrate the results when we only communicate the payload, i.e., $C(T)=P^{\texttt{SQ}}(T)$ defined in Equation~\eqref{eq:Payload_SQ}.  The blue lines are the results for when $C(T)$ follows the packet model in Equation~\eqref{eq:COMM2} with $c_1=128$ bytes, $c_0=64$ bytes and $P_{\max}=128$ bytes. The results show that if we only count the payload, then the two methods are comparable.   Our CAT tuning rule outperforms~\cite{alistarh2017qsgd} by only a small margin. This suggests that the heuristic in~\cite{alistarh2017qsgd} is quite communication efficient in the simplest case when only payload is communicated.  However, the heuristic rule is agnostic to the actual communication model $C(T)$ that is used. Therefore, we should not expect it to perform well for general $C(T)$. The blue lines show that the CAT is roughly two times more communication efficient than the the dynamic tuning rule in~\cite{alistarh2017qsgd} for the packet communication model. 

\section{Dynamic Stochastic Sparsification: Stochastic Gradient \& Multiple Nodes} \label{sec:DynSS}

% We now illustrate how we can

 We finally illustrate how the CAT framework can improve the communication efficiency of stochastic sparsification.
% One of the advantages of stochastic sparsification is its favorable properties that allow us to generalize our theoretical results to stochastic gradient methods and to multi node settings; we illustrate this below.  
%\subsection{Descent Lemma for Stochastic Sparsification}
%
%
%
 Our goal is to choose $T^i$ and $p^i$  dynamically for the stochastic sparsification in Eq.~\eqref{eq:StochSpar} to maximize the communication efficiency per iteration. %, i.e., the function improvement per bit.
 To this end, we need the following descent lemma, similar to the ones we proved for 
deterministic sparsifications in the last two sections. 
  \begin{lemma}\label{lemma:Descent_SS}
  
   Suppose that $F:\R^d\rightarrow \R$ is (possibly non-convex) $L$-smooth.
   %%%%%%%%%%%%%%%%%%%%%
   %
   %and $\gamma=1/L$. 
   Then for any $x,x^+\in\R^d$ with
   $ x^+=x-\gamma {Q}_{T,p}(\nabla F(x))$
   where ${Q}_{T,p}(\cdot)$ is defined in~\eqref{eq:StochSpar}
   and
    $\gamma=\omega_p(T)/ L$ we have
   \begin{align*}
       \mathbf{E} F(x^+)\leq  \mathbf{E} F(x) - \frac{\omega_p(T)}{2L} \mathbf{E}||\nabla F(x)||^2, \qquad \text{where} \quad \omega_p(T)=    \frac{||\nabla F(x)||^2}{ \mathbf{E} ||{Q}_{T,p}(\nabla F(x)) ||^2}. 
   \end{align*}
   %if we choose    $\gamma=\omega_p(T)/ L$. 
   %where
  %\begin{equation}\label{eqn:omega_SS}
  %  \omega_p(T)=    \frac{||\nabla F(x)||^2}{ \mathbf{E} ||{Q}_{T,p}(\nabla F(x)) ||^2} .
 %\end{equation}  
   %
   %we have
   %$$ \mathbf{E} F(x^+)\leq  \mathbf{E} F(x) - (0.5{\omega_p(T)}/{L}) \mathbf{E}||\nabla F(x)||^2.$$
  % where 
  % $$\alpha(B)=\frac{||Q_T(\nabla F(x))||^2}{||\nabla F(x) ||^2}.$$
  %Moreover, there are $L$-smooth functions $F(\cdot)$ for which the inequality is tight for every $T=1,\ldots,d$.
  %for any given $T$, the inequality holds with equality, e.g., if $F(x)=(1/2)\sum_{i=1}^T x_i^2$. 
 \end{lemma}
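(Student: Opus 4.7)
The plan is to mirror the structure used for Lemma~\ref{lemma:Descent_sparse} and Lemma~\ref{lemma:Descent_SpQ}, starting from the standard descent inequality for $L$-smooth functions. Specifically, I would apply the quadratic upper bound to $F(x^+)$ and substitute $x^+-x = -\gamma Q_{T,p}(\nabla F(x))$ to obtain
\[
F(x^+) \leq F(x) - \gamma \langle \nabla F(x), Q_{T,p}(\nabla F(x))\rangle + \frac{L\gamma^2}{2}\|Q_{T,p}(\nabla F(x))\|^2.
\]

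Next I would take expectation with respect to the Bernoulli variables $\xi_j$ conditional on $x$. The key property to exploit is that the stochastic sparsifier is \emph{unbiased}: since $\mathbf{E}[\xi_j]=p_j$, we have $\mathbf{E}[Q_{T,p}(g)]_j = g_j$ and therefore $\mathbf{E}\langle \nabla F(x), Q_{T,p}(\nabla F(x))\rangle = \|\nabla F(x)\|^2$. This reduces the conditional bound to
\[
\mathbf{E} F(x^+) \leq \mathbf{E} F(x) - \gamma \,\mathbf{E}\|\nabla F(x)\|^2 + \frac{L\gamma^2}{2}\,\mathbf{E}\|Q_{T,p}(\nabla F(x))\|^2.
\]

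From here, one simply substitutes $\gamma = \omega_p(T)/L$. Viewing the right-hand side as a scalar quadratic in $\gamma$, this choice is exactly the unconstrained minimizer, which already motivates why this is the natural step size. Plugging in, the linear term becomes $-\omega_p(T)\,\mathbf{E}\|\nabla F(x)\|^2/L$ and the quadratic term becomes $\omega_p(T)^2 \mathbf{E}\|Q_{T,p}(\nabla F(x))\|^2/(2L)$. Using the definition $\omega_p(T) = \|\nabla F(x)\|^2/\mathbf{E}\|Q_{T,p}(\nabla F(x))\|^2$ to cancel the second moment in the quadratic term yields $\omega_p(T)\,\mathbf{E}\|\nabla F(x)\|^2/(2L)$, and combining the two gives the claimed $-\omega_p(T)\,\mathbf{E}\|\nabla F(x)\|^2/(2L)$.

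There is no real analytical obstacle here; the argument is essentially the standard SGD-with-unbiased-compressor analysis, but specialized so that the second-moment factor is absorbed into the definition of $\omega_p(T)$. The only subtlety worth flagging in the write-up is that $\omega_p(T)$ and the step size $\gamma$ both depend on $x$ through $\nabla F(x)$, so the expectation in the statement should be understood as the conditional expectation over the randomness of the sparsification mask $\{\xi_j\}$ given $x$; the outer form with $\mathbf{E}$ on every term then follows by the tower property.
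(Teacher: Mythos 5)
Your proposal is correct and follows essentially the same route as the paper's proof: apply the $L$-smoothness quadratic upper bound, take expectation over the Bernoulli mask using unbiasedness of $Q_{T,p}$ to turn the inner product into $\|\nabla F(x)\|^2$, absorb the second moment $\mathbf{E}\|Q_{T,p}(\nabla F(x))\|^2$ via the definition of $\omega_p(T)$, and substitute $\gamma=\omega_p(T)/L$. Your added remarks — that this $\gamma$ is the minimizer of the resulting quadratic in the step size and that the expectation should be read as conditional on $x$ — are accurate clarifications rather than a different argument.
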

  Similarly as before, we optimize the descent and the communication efficiency by maximizing, respectively, $\omega_p(T)$ and $\omega_p(T)/C(T)$. 
  For given $T$, the   $p^{\star}$ minimizing $\omega_p(T)$  can be found efficiently, see~\cite{wang2018atomo} and our discussion in Appendix~\ref{app:DiscussionSM}. 
 % To optimize $\omega_p(T)$
  %we can use that %the expression
  %$$\mathbf{E} ||{Q}_{T,p}(g) ||^2= \sum_{j=1}^d \frac{1}{p_j}  g_j^2.$$
%  
%  
%  This lemma provides a similar descent that we can use to optimize the communication efficiency as we did in
% 
% 
%However, the convergence speed of gradient descent with the stochastic sparsification can be accelerated by tuning probabilities $p_1,p_2,\ldots,p_d$ given the fixed communication budget $S\in[1,d]$ such that the second moment $\mathbf{E} \|Q_S^{\rm SS} ( g )  \|^2_2$ is minimized \cite{wang2018atomo}.
% as illustrated in~\cite{wang2018atomo}, we obtain the optimal probabilities $p^{\star}$ for a fixed sparsity budget $T$ by solving the maximization problem over the function 
 %obtain the optimal $p$ for a given $T$ efficiently. 
%
%In particular, we solve the following optimization problem
%
%
 %That is
 %, for fixed $T$ it provides us with the optimal solution $p^{\star}$ by maximizing 
 %$\omega_p(T)$ subject to $\sum_{j=1}^d p_j = T$.
 %
 \iffalse
\begin{equation}\label{eqn:minVariance}
    \begin{aligned}
    \mathop{\text{maximize}}\limits_{p\in [0,1]^d} & \quad \omega_p(T) 
    %\sum_{j=1}^d \frac{1}{p_j}  \vert g_j\vert^2 
    \\ \text{subject to} & \quad  \sum_{j=1}^d p_j = T.
    \end{aligned}
\end{equation}
\fi
%
%
%
%The closed form of the optimal probability policy for Problem \eqref{eqn:minVariance} can be computed efficiently by Algorithm 1 in \cite{wang2018atomo}. 
 %Our goal is to find the optimal $T$ when we use the the optimal $p^{\star}$. 
 In this paper we always use $p^{\star}$ and omit $p$ in ${Q}_{T}(\cdot)$ and  $\omega(T)$.
 %
 %$$\sigma_{\star}(B):= \sigma_{p*}(B)$$
 %where $p^{\star}$ is the optimal solution of Problem~\eqref{eqn:minVariance}.
%
%
%
%\begin{itemize}
%    \item add the lemma which will be presented in the same way as shown the previous section.
%\end{itemize}
%
%\begin{lemma}
%
%\end{lemma}
%
%
%
%\subsection{Dynamic Algorithm and Illustrations}
 We can now use our CAT framework to optimize the communication efficiency. If we set $\omega^i(T){=}||\nabla F(x^i)||^2/\mathbf{E} ||{Q}_{T,p}(\nabla F(x^i)) ||^2 $ we get the  dynamic algorithm:
\begin{align*}
   \textbf{Step 1: } T^i= \underset{B\in[1,d]}{\text{argmax}}~ \frac{\omega^i(T)}{ C(T) }, ~~~~\textbf{Step 2: } 
    \gamma^i = \frac{\omega^i(T^i)}{ L}, ~~~~\textbf{Step 3: }   x^{i+1}=   x^i -\gamma^i  {Q}_{T^i}(\nabla F(x^i)).
\end{align*} 
This algorithm can maximize communication efficiency by finding the optimal sparsity budget $T$ to the one-dimensional problem. This can be solved efficiently since the sparsification parameter $\omega(T)$ has properties that are similar to $\alpha(T)$ for deterministic sparsification. Like Lemma \ref{Lemma:LB_alpha} for deterministic sparsification, the following result shows that $\omega(T)$ is increasing with the budget $T\in[1,d]$ and is lower-bounded by $T/d$.      
\begin{lemma}\label{lemma:SS_LB_omega}
For $g\in\mathbb{R}^d$ the function 
\(    \omega(T) = {\|g\|^2}/{\| Q_{T,p}(g) \|^2}
\)
is increasing over $T\in [1,d]$. Moreover, $\omega(T)\geq T/d$ for all $T\in [1,d]$, where we obtain the equality when $p_j=T/d$ for all $j$.

%the lower bound
%\begin{align*}
%    \omega(T) = \frac{\|g \|^2}{\mathbf{E}\| Q_{T,p} (g) \|^2} \geq \frac{T}{d}.
%\end{align*}
%is lower-bounded by $T/d$ for $T\in[0,d]$. 
\end{lemma}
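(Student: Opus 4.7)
The first step is to compute $\mathbf{E}\|Q_{T,p}(g)\|^2$ in closed form. Since the coordinates of $Q_{T,p}(g)$ are independent and the Bernoulli variables satisfy $\mathbf{E}[\xi_j] = \mathbf{E}[\xi_j^2] = p_j$, I obtain
$$\mathbf{E}\|Q_{T,p}(g)\|^2 = \sum_{j=1}^d \frac{g_j^2}{p_j^2}\,\mathbf{E}[\xi_j^2] = \sum_{j=1}^d \frac{g_j^2}{p_j},$$
so $\omega_p(T) = \|g\|^2 \big/ \sum_j g_j^2/p_j$, and $\omega(T)$ is obtained by maximizing this over $p \in (0,1]^d$ with $\sum_j p_j = T$ (equivalently, by minimizing $\sum_j g_j^2/p_j$). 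From this closed form the two claims can be attacked separately.

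For the lower bound $\omega(T) \geq T/d$, the plan is to exhibit a single feasible (possibly suboptimal) choice of $p$ and invoke optimality. Taking $p_j = T/d$ for every $j$ lies in $(0,1]^d$ because $T \in [1,d]$, and obviously satisfies $\sum_j p_j = T$. Substituting gives $\sum_j g_j^2/p_j = (d/T)\|g\|^2$, hence $\omega_p(T) = T/d$. Since $\omega(T) = \max_p \omega_p(T) \geq \omega_p(T)$ for this particular $p$, the bound follows, and the equality is realized exactly by the uniform choice $p_j = T/d$.

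For the monotonicity, I take $T_1 \leq T_2$ in $[1,d]$, let $p^{(1)}$ be optimal for $T_1$, and plan to build a feasible $p^{(2)}$ for $T_2$ that dominates $p^{(1)}$ componentwise. Concretely, I distribute the extra mass $T_2 - T_1$ across coordinates where $p^{(1)}_j < 1$, producing $p^{(2)} \in (0,1]^d$ with $p^{(2)} \geq p^{(1)}$ componentwise and $\sum_j p^{(2)}_j = T_2$. Such a distribution is always feasible because the total slack satisfies
$$\sum_{j=1}^d \bigl(1 - p^{(1)}_j\bigr) = d - T_1 \geq T_2 - T_1,$$
and this slack identity is the only non-trivial ingredient in the proof. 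Since $u \mapsto 1/u$ is decreasing on $(0,1]$, pointwise domination gives $\sum_j g_j^2/p^{(2)}_j \leq \sum_j g_j^2/p^{(1)}_j$, and therefore
$$\omega(T_2) \geq \omega_{p^{(2)}}(T_2) \geq \omega_{p^{(1)}}(T_1) = \omega(T_1),$$
which is the claim. The main obstacle, handling the upper-bound constraint $p_j \leq 1$ in the monotonicity step, is dispatched by the slack identity above; everything else is direct substitution.
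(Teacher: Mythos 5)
Your proposal is correct, and it is worth comparing to the paper's own argument because the two diverge on the monotonicity claim. For the lower bound $\omega(T)\geq T/d$ you and the paper do exactly the same thing: compute $\mathbf{E}\|Q_{T,p}(g)\|^2=\sum_j g_j^2/p_j$, plug in the uniform choice $p_j=T/d$ (feasible since $T\in[1,d]$), observe that it yields $\omega_p(T)=T/d$, and invoke the fact that $\omega(T)$ is defined with the optimal $p^\star$, so it can only be larger. For monotonicity, however, the paper offers only an informal sentence — essentially ``a larger budget forces some $p_j$ closer to one, therefore $\omega(T)$ increases'' — which is an intuition rather than a proof. Your argument makes this precise: starting from an optimal (or near-optimal) $p^{(1)}$ for budget $T_1$, you distribute the extra mass $T_2-T_1$ over the available slack $\sum_j(1-p^{(1)}_j)=d-T_1\geq T_2-T_1$ to obtain a feasible $p^{(2)}\geq p^{(1)}$ componentwise, and then use the monotonicity of $u\mapsto 1/u$ to conclude $\omega(T_2)\geq\omega_{p^{(2)}}(T_2)\geq\omega_{p^{(1)}}(T_1)=\omega(T_1)$. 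This is the same underlying mechanism the paper gestures at, but your slack identity is exactly the ingredient needed to turn it into a proof; in that sense your write-up is strictly more rigorous than the one in the appendix. (One cosmetic note: the lemma statement omits the expectation in the denominator of $\omega(T)$; like the paper's proof, you correctly work with $\mathbf{E}\|Q_{T,p}(g)\|^2$, which is the intended quantity per the definition of $\omega_p(T)$ in the stochastic-sparsification descent lemma.)
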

This lemma leads to many consequences for $\omega(T)$, analogous to $\alpha(T)$. For instance, by following proof arguments in  Proposition \ref{prop:alpha_over_c}, $\omega(T)/C(T)$ attains its maximum for a $T$ which is an integer multiple of $\tau_{\max}$ when $C(T)= \tilde{c}_1 \lceil T / \tau_{\max} \rceil + c_0 $.

Furthermore, stochastic sparsification has some favorable properties that allow us to generalize our theoretical results to stochastic gradient methods and to multi-node settings. %; we illustrate this next.
%
%\subsection*{Stochastic Gradient Descent and Multi Node Setting} \label{sec:multi-node}
%
 %We can extend our results to stochastic gradient descent for a multi-node setting.  
%
%
%In this paper, we are interested in algorithms which utilize multiple machines to coordinately solve the minimization problem over a objective function 
 Suppose that we have $n$ nodes that wish to solve the minimization problem with 
  \(F(x)= ({1}/{n}) \sum_{j=1}^n f_j(x)\)
  where $f_j(\cdot)$ is kept by node $j$\deleted{(in our setting, $f_j$ is the empirical loss of the data which resides in node $j$)}.
 %In essence, we consider a compressed SGD which updates the solution according to:
 Then, we may solve the problem by the distributed compressed gradient method 
\begin{align}\label{eqn:CompressedSGD}
    x^{i+1} = x^i - \gamma \frac{1}{n}\sum_{j=1}^n Q_{T_j^i}\left( g_j(x^i;\xi_j^i) \right),
\end{align}
where $Q(\cdot)$ is the stochastic sparsifier %Here we assume that each node finds the optimal probabilities $p$ by solving 
%the unbiased stochastic compressor, which preserves the bounded variance condition, i.e. $$\mathbb{E}\| Q(x)\|\leq \alpha\| x\|^2, \quad \forall x\in\mathbb{R}^d.$$
 and $g_j(x;\xi_j)$ is a stochastic gradient at $x$. 
 We assume that $g_j(x;\xi_j)$ is unbiased and satisfies 
%We assume that the stochastic gradient $g_j(x;\xi_j)$ satisfies 
a bounded variance assumption, i.e. $\mathbf{E}_{\xi}  g_j(x;\xi_j)  = \nabla f_j(x)$ and $\mathbf{E}_{\xi} \| g_j(x;\xi_j) - \nabla F(x) \|^2 \leq \sigma^2$.
%\begin{equation}\label{eqn:conditions_unbiasedAndBoundedVar}
%    \begin{aligned}
%\mathbf{E}_{\xi}  g_j(x;\xi_j)  = \nabla f_j(x), ~~~~ \text{and}~~~~  
%\mathbf{E}_{\xi} \| g_j(x;\xi_j) - \nabla F(x) \|^2 \leq \sigma^2, \quad \forall x\in\mathbb{R}^d.
%\end{aligned}
%\end{equation}
The expectation is with respect to a local data distribution at node $j$. These conditions are standard to analyze first-order algorithms in machine learning \cite{feyzmahdavian2016asynchronous,lian2015asynchronous}.

We can derive a similar descent lemma 
for Algorithm \eqref{eqn:CompressedSGD}  as Lemma \ref{lemma:Descent_SS} for the single-node sparsification gradient method (see Appendix \ref{app:lemma:MultinodeSS_GD}). 
%In addition, multi-node gradient methods with stochastic sparsification have descent properties that are similar to stochastically sparsified gradient methods for the single-node architecture.  
%
 This means that we easily prove similar data-dependent convergence results as we did for deterministic sparsification in Table~\ref{Tab:SDG_IC}.
To illustrate this, suppose that for a given $T$ there is $\bar{\omega}_T$  satisfying  $\omega_j^i(T) \geq \bar{\omega}_T$ where $\omega_j^i(T)= {||\nabla F_j(x^i)||^2}/{ \mathbf{E} ||{Q}_{T}(\nabla F_j(x^i)) ||^2}$. %is the sparsification parameter of node $j$ at iteration $i$. 
 Then the iteration complexity of Algorithm \eqref{eqn:CompressedSGD} is as given in the right part of  Table~\ref{Tab:SDG_IC}. %, and is analogous to single-node sparsification methods when we let $n=1$.  
 The parameter $\bar{\omega}_T$ captures the sparsification gain, similarly as $\bar{\alpha}_T$ did for deterministic sparsification. In the worst case there is no communication improvement of sparsification compared to sending full gradients, but when $\bar{\omega}_T$ is large the communicaton improvment can be significant.

\section{Experimental Results} \label{sec:Experiments}

\textbf{Experiment 1 (single node).} %
We evaluate the performance of our CAT framework for dynamic sparsification and
quantization (S+Q) in the single-master, single-worker setup on the \texttt{URL}
data set with  $2.4$ million data points and $3.2$ million features. The master node, located 500 km away from the worker node, is
responsible for maintaining the decision variables based on the gradient
information received from the worker node. 
%computes the gradient based on the
%loss function and the data. 
The nodes communicate with each other over a 1000
Mbit Internet connection using the \texttt{ZMQ} library. 
%The \texttt{URL} data set contains $2.4$ million data points and $3.2$ million features.
We implemented
vanilla gradient descent (GD), Alistarh's S+Q~\cite{alistarh2017qsgd} and CAT
S+Q using the C++ library \texttt{POLO}~\cite{aytekin2018polo}. We first set
$\texttt{FPP} = 32$ and measure the communication cost
%the cost of communicating $T \in [1,d]$
%bits 
in wall-clock time. After obtaining a linear fit to the measured communication cost
(see supplementary for details), we ran $30,000$ iterations and with step-size according to Lemma~\ref{lemma:Descent_SpQ}.
Figure~\ref{fig:MPI_multiplenodes} shows the loss improvement with respect to
the total communication cost (leftmost) and wall-clock time (middle). We observe that CAT S+Q outperforms GD and Alistarh's S+Q up to
two orders and one order of magnitude, respectively, in communication
efficiency. In terms of wall-clock time, CAT S+Q takes 26\% (respectively, 39\%)
more time to finish the full $30,000$ iterations than that of GD (respectively,
Alistarh's S+Q). Note, however, that CAT S+Q achieves an order of magnitude loss
improvement in an order of magnitude shorter time, and the loss value is always
lower in CAT S+Q than that in Alistarh's S+Q. Such a performance is desirable in
most of the applications (e.g., hyper-parameter optimizations and day-ahead
market-price predictions) that do not impose a strict upper bound on the number
of iterations but rather on the wall-clock runtime of the algorithm.

\textbf{Experiment 2 (MPI - multiple nodes):} 
We evaluate the performance of our CAT tuning rules on  deterministic sparsification (SG), stochastic sparsification (SS), and sparsification with quantization (S+Q)  in a multi-node setting on \texttt{RCV1}.
% for logistic regression on the \texttt{RCV1} data-set. 
We compare the results to gradient descent and Alistarh's S+Q~\cite{alistarh2017qsgd}.
 %We compare the results with gradient descent (no-compression), 
 %but using the dynamic tuning rule in~\cite{alistarh2017qsgd} instead of CAT.
 We implement all algorithms in Julia, and run them on $4$ nodes using MPI, splitting the data evenly between the nodes.
% of gradient descent with dynamic compressors for solving logistic regression problems.
%
%We use hyper-parameter tuning rules stated throughout this paper to optimize the compression accuracy based on the local gradient which is known for each node. 
%
%We implemented all algorithms in Julia, and ran them on $4$ nodes using MPI over \texttt{RCV1}.
% 
In all cases we use the packet communication model~\eqref{eq:COMM2} with $c_1=576$ bytes, $c_0=64$ bytes and $P_{\max}=512$ bytes. 
The rightmost plot in Figure~\ref{fig:MPI_multiplenodes} shows that our CAT S+Q outperforms all other compression schemes.
 In particular, CAT is roughly 6 times more communication efficient than  the dynamic rule in \cite{alistarh2017qsgd} for the same compression scheme (compare number of bits needed to reach $\epsilon=0.4$).  
\iffalse 
 The results also show that both deterministic and stochastic sparsification are much less communication efficent than sparsification with quantization. However, all sparsifiaction methods provide huge communication improvement compared to gradient descent. 
 \fi
 
 \begin{figure}
    \centering
  \includegraphics[width = \textwidth]{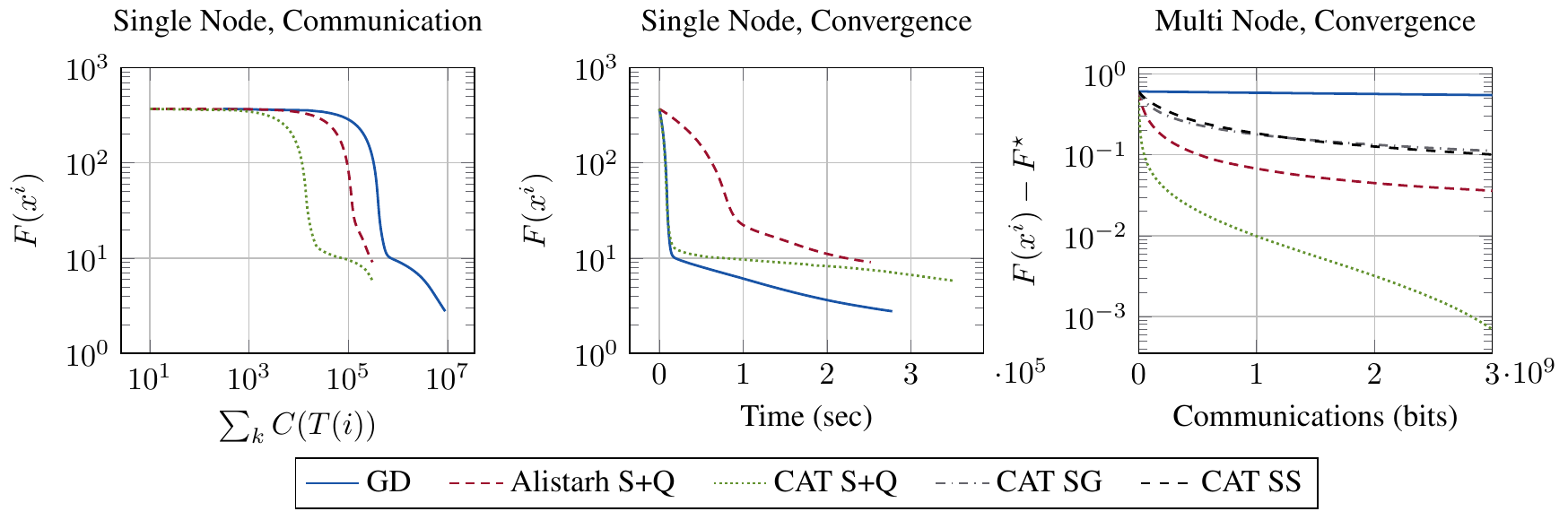}
    \caption{ Performance of gradient methods using CAT frameworks on three main compressors for solving logistic regression problems. We used the \texttt{URL} data set in the single-node (one master/one worker) architecture and \texttt{RCV1} in the multi-node (one master/four worker) setting.
    %We solved problems over the \texttt{URL} data set in the single-node architecture (with one master and one worker), and %over the \texttt{RCV1} data set in the multi-node architecture (with one master and four workers). 
    }
    %set with $691,641$ data points and $47,236$ features.}
    \label{fig:MPI_multiplenodes}
\end{figure}

 %in the packet communication model in Equation~\eqref{eq:COMM2} with $c_1=128$ bytes, $c_0=64$ bytes and $P_{\max}=128$ bytes. 
%Like the centralized algorithm in Section \ref{subsec:DynamicS+Q_figure}, our
% dynamic rule on the multiple node scenario gives higher convergence acceleration than the rule by \cite{alistarh2017qsgd}.
 %
% In particular, the communication is saved in the order of $2$ by using our rule compared to \cite{alistarh2017qsgd} to reach the accuracy $\epsilon=0.4$.
\section{Conclusions}

We have proposed communication-aware adaptive tuning to optimize the communication-efficiency of gradient sparsification.
%for main compression schemes to optimize the %communication efficiency.
%
%
The adaptive tuning relies on a data-dependent measure of objective function improvement, and adapts the compression level to maximize the descent per communicated bit. 
%quantifying the improvement in the objective %function 
\iffalse
In particular, we quantify the improvement in the objective function in descent lemmas related to 
the data-dependent parameter so that the compressor adapts its accuracy to maximize the function improvement per communicated bit.
\fi
%
%
Unlike existing heuristics, our tuning rules are guaranteed to save communications in realistic communication models. 
In particular, our rules is more communication-efficient when communication overhead or packet transmissions are accounted for. 
%than traditional heuristic tuning especially for . 
%
%
In addition to centralized analysis, our tuning strategies are proven to reduce communicated bits also in distributed scenarios.  

\bibliography{refs_SK_modified}

\newpage
\section{Appendix}
\appendix
\section{Proofs of Lemmas and Propositions}

\subsection{Proof of Lemma \ref{lemma:Descent_sparse} } %\ref{lemma:Descent_sparse}}
By the $L$-smoothness of $F(\cdot)$  and the iterate $x^{+}=x-\gamma Q_T(\nabla F(x))$ where $x^+, x\in\mathbb{R}^d$, from  Lemma 1.2.3. of \cite{nesterov2018lectures} we have
\begin{align*}
    F(x^+) 
    &\leq F(x) - \gamma \left\langle \nabla F(x) , Q_T(\nabla F(x)) \right\rangle \\
    &\hspace{0.4cm}+\frac{L\gamma^2}{2} \| Q_T(\nabla F(x)) \|^2.
\end{align*}
It can be verified that $$\left\langle g, Q_T(g) \right\rangle= ||Q_T(g)||^2$$
for all $g\in \R^d$ 
and, therefore, if $\gamma = 1/L$ then we have %, then by the definition of $Q_T(\cdot)$ in Equation (2)
\begin{align*}
    F(x^+) \leq F(x) - \frac{1}{2L}\| Q_T(\nabla F(x)) \|^2.
\end{align*}
By the definition of $\alpha(T)$ we have $\| Q_T(\nabla F(x)) \|^2 = \alpha(T)\| \nabla F(x) \|^2$, which yields the result.

Next, we prove that there exist $L$-smooth functions $F(\cdot)$ where the inequality is tight. Consider $F(x) = L\|x\|^2/2$. Then, $F$ is $L$-smooth, and also satisfies  
\begin{align*}
    &F( x - \gamma Q_T(\nabla F(x)) )  \\
    &\hspace{0.5cm} = \frac{L}{2} \| x - \gamma Q_T(Lx) \|^2 \\
    &\hspace{0.5cm} = \frac{L}{2}\| x \|^2 - \gamma \langle  Lx , Q_T(Lx)\rangle + \frac{L\gamma^2}{2}\| Q_T(Lx)\|^2.
\end{align*}
Since  $\langle g , Q_T(g)\rangle  = \| Q_T(g) \|^2$ by the definition $Q_T(\cdot)$ and $\gamma = 1/L$, we have   
\begin{align*}
    F( x - \gamma Q_T(\nabla F(x)) ) 
    &  = F(x)  -  \frac{1}{2L}\| Q_T(Lx)\|^2.
\end{align*}
Since $\nabla F(x)=Lx$, by the definition of $\alpha(T)$ 
\begin{align*}
\alpha(T) &= \frac{\| Q_T(L x) \|^2}{ \| L x\|^2}  = \frac{\sum_{i\in I_T} x_i^2}{ \sum_{i=1}^d x_i^2},
\end{align*}
where $I_T$ is the index set of $T$ elements with the highest absolute magnitude. Therefore,  
\begin{align*}
    F( x - \gamma Q_T(\nabla F(x)) ) 
    &  = F(x)  -  \frac{\alpha(T)}{2L}\| \nabla F(x)\|^2.
\end{align*}

\subsection{Proof of Lemma \ref{Lemma:LB_alpha}}

Take $g\in\mathbb{R}^d$ and, 
without the loss of generality, we let %$g = \left[ g_1 \ g_2 \ \ldots  \ g_d \right]^T$ where 
$\vert g_1\vert \geq \vert g_2 \vert \geq \ldots \geq  \vert g_d \vert$ and $g_i\in\mathbb{R}$ (otherwise we may re-order $g$). 
To prove that $\alpha(T)$ is increasing we rewrite the definition of $\alpha(T)$ equivalently as
\begin{align*}
    \alpha(T) = \sum_{j=1}^T g_j^2/\|g\|^2, \quad \text{for} \quad T \in \{0,1,2,\ldots,d\}.
\end{align*}
Notice that $\alpha(T)=0$ when $T=0$. Clearly, $\alpha(T)$ is also increasing with $T\in[1,d]$ since each term of the sum $\sum_{j=1}^T g_j^2$ is increasing.
%
%
\iffalse
Let $\omega(T)=\left(\sum_{j = T+1}^{d} g_j^2 \right) / \left(\sum_{j=1}^{T} g_j^2\right)$. Then, 
\begin{align*}
    \frac{d}{dT} \alpha(T) =  - \frac{1}{(1+ \omega(T))^2} \cdot \frac{d}{dT} \omega(T).
\end{align*}
\fi

We prove that $\alpha(T)$ is concave by recalling the slope of $\alpha(T)$
\begin{align*}
    \frac{d}{dT}\alpha(T) = g_M^2/\|g\|^2, 
\end{align*}
for $T \in (M-1,M)$ and $M=1,2,\ldots,d$. 
Since $|g_1|\geq |g_2| \geq \ldots \geq |g_d|$, the slope of $\alpha(T)$ has a non-increasing slope when $T$ increases. Therefore, $\alpha(T)$ is concave.

We prove the second statement by writing $\|g\|^2$ on the form of
$$
\| g \|^2  = \sum_{j\in I_T(g)} g_j^2 + \sum_{j \in I_{T^c}(g)} g_j^2,
$$
where $I_{T^c}$ is the index set of $d-T$ elements with lowest absolute magnitude.  
Applying the fact that $g_j^2 \leq  \min_{l \in I_T(g)} g_l^2$ for $j\in I_{T^c}(g)$ and that $\min_{l \in I_{T}(g)} g_l^2 \leq (1/T) \sum_{l \in I_T(g)} g_l^2$ into the main inequality, we have 
\begin{align*}
    \| g\|^2 & \leq \left( 1+\frac{d-T}{T} \right) \sum_{j\in I_T(g)} g_j^2.
\end{align*}
By the definition of $Q_T(g)$, we get
$$\alpha(T)\geq T/d.$$

Finally, 
we prove the last statement by  setting $F(x) = (1/2)x^TAx$ where $A=(L/d) \mathbf{1}\mathbf{1}^T$.% each element of $A$ is $a_{i,j} = 1$ for all $i,j\in[1,d]$.
 Then $F(\cdot)$ is $L$-smooth  and its gradient is 
$$\nabla F(x) =  \bar{x} \mathbf{1},$$
where $$\bar{x}=\frac{1}{d}\sum_{i=1}^d x_i.$$
Therefore, $\| Q_T(\nabla F(x)) \|^2 = (T/d)\| \nabla F(x) \|^2.$

\subsection{Proof of Proposition \ref{prop:alpha_over_c}}

%Since $\alpha(T)$ is a non-negative concave function and $C(T)=\tilde {c}_1 T + c_0$ is a positive convex function, $\alpha(T)/C(T)$ is quasi-concave. 
%This implies 

The ratio between a non-negative concave function $\alpha(T)$ and a positive affine function $C(T)$ is quasi-concave and semi-strictly quasi-concave~\cite{Schaible2013,Avriel2010}, meaning that every local maximal point is globally maximal.
%$\alpha(T)/C(T)$ is quasi-concave,. 
%
%In addition, we can prove $\alpha(T)/C(T)$ is strictly quasi-concave when $\alpha(T)$ is strictly concave over $T\in\{0,1,\ldots,d\}$. This is the case when
%$$
%g_i^2 \neq g_j^2, \quad \text{for $i\neq j$ and }  i,j\in [1,d].  
%$$
%A strictly quasi-concave function $\alpha(T)/C(T)$ implies that the global optimizer is also unique. 

Next, we consider $\alpha(T)/C(T)$ when $C(T)= \tilde{c}_1 \lceil T / \tau_{\max} \rceil + c_0.$ If $T \in ( (c-1)\tau_{\max} , c \tau_{\max}] $, then $\alpha(T) \leq \alpha(c\tau_{\max})$ due to monotonicity of $\alpha(\cdot)$ and $C(T) = C(c\tau_{\max})$, meaning that  $\alpha(T)/C(T) \leq \alpha(c\tau_{\max})/C(c\tau_{\max})$. This implies that $T=c\tau_{\max}$ maximizes $\alpha(T)/C(T)$ for $T \in ( (c-1)\tau_{\max} , c \tau_{\max}] $ and that we can obtain the maximum of $T = c\tau_{\max}$ for some integers $c$.

\subsection{Proof of Proposition \ref{prop:TisOneSimple}}
Take $g\in\mathbb{R}^d$ and, 
without the loss of generality, we let %$g = \left[ g_1 \ g_2 \ \ldots  \ g_d \right]^T$ where 
$\vert g_1\vert \geq \vert g_2 \vert \geq \ldots \geq  \vert g_d \vert$ and $g_i\in\mathbb{R}$ (otherwise we may re-order $g$). 
Since $C(T) = C \cdot T$ where $C=\lceil \log_2(d) \rceil + \texttt{FPP}$, we have
\begin{align*}
    T^i = \mathop\text{argmax}\limits_{T\in[1,d]} \frac{\alpha^i(T)}{C(T)} =  \mathop\text{argmax}\limits_{T\in[1,d]} \frac{\sum_{i=1}^T g_i^2}{C\cdot T}. 
\end{align*}
Since ${\sum_{i=1}^T g_i^2}/{T} \leq g_1^2$, the solution from Equation \eqref{eq:Alg1_G} is $T^i=1$ for all $i$.

\subsection{Proof of Lemma \ref{lemma:Descent_SpQ}   }
By using the $L$-smoothness of $F(\cdot)$ (Lemma 1.2.3. of \cite{nesterov2018lectures}) and the iterate $x^+=x-\gamma Q_T(\nabla F(x))$ where $x^+, x\in\mathbb{R}^d$, we have 
\begin{align*}
    F(x^+) 
    &\leq F(x) - \gamma \left\langle \nabla F(x) ,Q_T(\nabla F(x)) \right\rangle \\
    &\hspace{0.4cm} +\frac{L\gamma^2}{2} \| Q_T(\nabla F(x)) \|^2.
\end{align*}
If $Q_T(\nabla F(x))$ has $T$ non-zero elements, then we can easily prove that 
\begin{align*}
    \left\langle \nabla F(x) ,Q_T(\nabla F(x)) \right\rangle 
    & =  \sqrt{T\beta(T)} \cdot \| \nabla F(x) \|^2, \quad \text{and} \\
    \| Q_T(\nabla F(x)) \|^2 & = T\cdot\| \nabla F(x) \|^2,
\end{align*}
where $\beta(T)$ is defined as
\begin{align*}
    \beta(T)=  \frac{1}{T} \frac{\langle\nabla F(x),Q_T(\nabla F(x)) \rangle^2}{\Vert\nabla F(x)\Vert^4_2}.
\end{align*}
Plugging these equations into the above inequality yields  
\begin{align*}
    F(x^+) \leq F(x) -\left( \gamma  \sqrt{T\beta(T)} - \frac{T L\gamma^2}{2} \right) \| \nabla F(x)\|^2. 
\end{align*}
Setting $\gamma = \sqrt{\beta(T)}/(\sqrt{T} L)$ completes the proof.

\subsection{Proof of Lemma \ref{lemma:Descent_SS} } %\ref{lemma:Descent_SS}}
By using the $L$-smoothness of $F(\cdot)$ (Lemma 1.2.3. of \cite{nesterov2018lectures}) and the iterate $x^+=x-\gamma Q_{T,p}(\nabla F(x))$ where $x^+, x\in\mathbb{R}^d$, we have 
\begin{align*}
    F(x^+) & \leq F(x) - \gamma \left\langle \nabla F(x) ,Q_{T,p}(\nabla F(x)) \right\rangle \\
    &\hspace{0.4cm}+\frac{L\gamma^2}{2} \| Q_{T,p}(\nabla F(x)) \|^2.
\end{align*}
Since $\omega_p(T)$ is defined by 
\begin{align*}
    \omega_p(T)=    \frac{||\nabla F(x)||^2}{ \mathbf{E} ||{Q}_{T,p}(\nabla F(x)) ||^2},
\end{align*}
taking the expectation, and using the unbiased property of $Q_{T,p}(\cdot)$ we get
\begin{align*}
    \mathbf{E} F(x^+) \leq \mathbf{E} F(x) - \left(\gamma - \frac{L\gamma^2}{2\omega_p(T)} \right) \mathbf{E} \| \nabla F(x) \|^2.
\end{align*}
Now taking $\gamma = \omega_p(T)/L$ concludes the complete the proof.

\subsection{Proof of Lemma \ref{lemma:SS_LB_omega} }
Consider $Q_{T,p}(\cdot)$ in Equation \eqref{eq:StochSpar}. Then, 
\begin{align*}
    \mathbf{E} \| Q_{T,p} (g) \|^2 = \sum_{j=1}^d \frac{1}{p_j} g_j^2.
\end{align*}
Here, we assume without the loss of generality that each element of 
$g\in\mathbb{R}^d$ is $g_i$ such that $\vert g_1\vert \geq \vert g_2\vert \geq \ldots\geq \vert g_d \vert$ (otherwise we may re-order $g$). Therefore, 
\begin{align*}
    \omega(T) = \frac{\sum_{j=1}^d g_j^2}{\sum_{j=1}^d \frac{1}{p_j} g_j^2}. 
\end{align*}
To ensure the high sparsity budget $T$ of the compressed gradient $Q_{T,p}(g)$, probabilities must also have high values in some coordinates (some $p_j$ are close to one). Therefore, $\omega(T)$ is increasing over the sparsity budget $T\in[1,d]$.

Next, we assume that $Q_{T,p}(\cdot)$ in Equation \eqref{eq:StochSpar}  has $p_j=T/d$ for all $j$. Then, 
\begin{align*}
    \mathbf{E} \| Q_{T,p} (g) \|^2 = \frac{d}{T}\|g\|^2.
\end{align*}
Plugging this result into the main definition, we have $\omega(T)=T/d$.
Since we assign $p$ that minimizes $\omega_p(T) = \|g \|^2/\mathbf{E}\|g\|^2$,  $\omega_p(T)\geq T/d$.

%In $\omega_p(T)$ we use the optimal $p$ that minimizes 
%$\omega_p(T) = \|g \|^2/ \mathbf{E}\|g\|^2$. 
%This implies that $\omega_p(T) \geq T/d$.

%For a fixed $T$, we can find the optimal probability $p$ 
%that reduces the variance $\omega_p(T) = \|g \|^2/ \mathbf{E}\|g\|^2$
%by solving the minimization problem \eqref{eqn:minVariance}. 
%This implies that $\omega(T) \geq T/d$.

\section{Iteration Complexities of Adaptive Compressors}\label{app:Table1}
%Proof of Theoretical Results for Table \ref{Tab:SDG_IC}}

In this section, we provide the iteration complexities of gradient descent \eqref{Alg:main} with three main compressors:  
deterministic sparsification \eqref{eq:T_sparse_operator}, dynamic sparsification together with quantization \eqref{eq:T_sparse_quant} , and stochastic sparsification \eqref{eq:StochSpar}.

\subsection{Analysis for Deterministic Sparsification}

We provide theoretical convergence guarantees for gradient descent using deterministic sparsification. 

\begin{theorem}\label{thm:TopKSG_thm}
Consider the minimization problem over the function $F(x)$ and the iterates $\{x^i\}_{i\in\mathbb{N}}$ generated by gradient descent with dynamic sparsification in Equation \eqref{eq:Alg1_G}. Suppose that there exists $\bar\alpha_T\in[0,1]$ such that ${\alpha}^i(T) \geq \bar\alpha_T \geq T/d$ for all $i$. Set $\epsilon_0 = F(x^0) - F(x^\star)$. Then, 
\begin{enumerate}[leftmargin=0.3in]
    \item \textbf{Non-convex:} If $F$ is $L$-smooth, then we find ${\min}_{l\in[0,i-1]}\| \nabla F(x^l) \|^2 \leq\epsilon$ in 
        \begin{align*}
        i =  \frac{1}{\bar \alpha_T} \frac{2L\epsilon_0}{\epsilon} \quad \text{iterations.}
    \end{align*}
    \item \textbf{Convex:} If $F$ is also convex and there exists a positive constant $R$ such that $\| x^i-x^\star \|\leq R$, then  we find $F(x^i) - F(x^\star)\leq \epsilon$ in 
    \begin{align*}
        i = \frac{1}{\bar \alpha_T} \frac{2L R^2}{\epsilon}\quad \text{iterations.}
    \end{align*}
    \item \textbf{Strongly-convex:} If $F$ is also $\mu$-strongly convex, then we find $F(x^i) - F(x^\star) \leq \epsilon$ in 
        \begin{align*}
        i = \frac{1}{\bar\alpha_T}\kappa \log\left(\frac{\epsilon_0}{\epsilon}\right)\quad \text{iterations.}
    \end{align*}
\end{enumerate}

\end{theorem}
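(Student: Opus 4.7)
The plan is to reduce each of the three cases to the corresponding classical gradient-descent convergence argument, using Lemma \ref{lemma:Descent_sparse} in place of the usual descent lemma. Specifically, Lemma \ref{lemma:Descent_sparse} with $\gamma = 1/L$ combined with the hypothesis $\alpha^i(T) \geq \bar{\alpha}_T$ yields the single master inequality
\[
F(x^{i+1}) \leq F(x^i) - \frac{\bar{\alpha}_T}{2L}\|\nabla F(x^i)\|^2,
\]
which is structurally identical to the textbook descent inequality for $L$-smooth functions, but with effective smoothness $L/\bar{\alpha}_T$. Each of the three iteration complexities will then follow by substituting $L \mapsto L/\bar{\alpha}_T$ into the classical gradient-descent rates; the factor $1/\bar{\alpha}_T$ in every bound of Table~\ref{Tab:SDG_IC} is exactly this substitution.

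For the non-convex case, I would telescope the master inequality over $l=0,\ldots,i-1$, upper-bound the right-hand side by $F(x^0) - F(x^\star) \leq \epsilon_0$, and lower-bound the left-hand side by $i \cdot \min_{l\in[0,i-1]}\|\nabla F(x^l)\|^2$. Rearranging forces $\min_{l}\|\nabla F(x^l)\|^2 \leq 2L\epsilon_0/(i\bar{\alpha}_T)$, and setting the right-hand side equal to $\epsilon$ produces the claimed iteration count.

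For the convex case, I would combine convexity with the bounded-iterate hypothesis via Cauchy-Schwarz: $F(x^i) - F(x^\star) \leq \langle\nabla F(x^i), x^i - x^\star\rangle \leq R\|\nabla F(x^i)\|$. Writing $\Delta_i = F(x^i) - F(x^\star)$, the master inequality becomes the quadratic recursion $\Delta_{i+1} \leq \Delta_i - (\bar{\alpha}_T/(2LR^2))\Delta_i^2$. The standard reciprocal manipulation, dividing by $\Delta_i\Delta_{i+1}$ and using monotonicity $\Delta_{i+1} \leq \Delta_i$, gives $1/\Delta_{i+1} - 1/\Delta_i \geq \bar{\alpha}_T/(2LR^2)$, which telescopes to $\Delta_i \leq 2LR^2/(i\bar{\alpha}_T)$, matching the claim.

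For the strongly-convex case, I would invoke the standard consequence of $\mu$-strong convexity, $\|\nabla F(x)\|^2 \geq 2\mu(F(x) - F(x^\star))$, obtained by minimizing the strong-convexity lower bound in $y$. Plugging this into the master inequality yields the linear contraction $\Delta_{i+1} \leq (1 - \bar{\alpha}_T\mu/L)\Delta_i = (1 - \bar{\alpha}_T/\kappa)\Delta_i$; iterating and inverting via $-\log(1-x) \geq x$ produces the claimed $i = (1/\bar{\alpha}_T)\kappa\log(\epsilon_0/\epsilon)$. The main subtlety, rather than a real obstacle, is ensuring that the hypothesis $\alpha^i(T) \geq \bar{\alpha}_T$ is interpreted consistently with whichever $T^i$ the dynamic rule in Eq.~\eqref{eq:Alg1_G} selects at each iteration; once that bookkeeping is done, the three arguments are routine adaptations of classical gradient-descent convergence proofs with $L$ rescaled to $L/\bar{\alpha}_T$.
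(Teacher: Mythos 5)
Your proposal is correct and follows essentially the same route as the paper's own proof: the telescoping argument for the non-convex case, the reciprocal-of-the-gap recursion (the paper packages this as a small auxiliary lemma on sequences satisfying $V^{i+1}\leq V^i - q(V^i)^2$) combined with $F(x^i)-F(x^\star)\leq R\|\nabla F(x^i)\|$ for the convex case, and the Polyak--{\L}ojasiewicz-type inequality $\|\nabla F(x)\|^2\geq 2\mu(F(x)-F(x^\star))$ with the bound $-1/\log(1-x)\leq 1/x$ for the strongly convex case. The bookkeeping point you raise about which $T$ the bound $\alpha^i(T)\geq\bar{\alpha}_T$ refers to is likewise left implicit in the paper, so no substantive difference remains.
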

\begin{proof}
See Appendix \ref{app:thm:TopKSG_thm}. 
\end{proof}

\subsection{Analysis for Dynamic Sparsification together with Quantization}

We prove convergence rate of gradient descent with dynamic sparsification together with quantization. 

\begin{theorem}\label{thm:S+Q_GD_thm}
Consider the minimization problem over the function $F(x)$ and the iterates $\{x^i\}_{i\in\mathbb{N}}$ generated by gradient descent with sparsification together with quantization  in Equation \eqref{eqn:CATS+Q_iteration}. Suppose that there exist constants $T\in (0,d]$ and $\bar\beta_T\in(0,\infty)$  such that
$ T^i \leq T \text{ and } {\beta}^i(T^i) \geq \bar\beta_T$ for all $i$, respectively. Set $\epsilon_0 = F(x^0) - F(x^\star)$. Then,  
\begin{enumerate}[leftmargin=0.3in]
     \item \textbf{Non-convex:} If $F$ is $L$-smooth, then we find ${\min}_{l\in[0,i-1]}\| \nabla F(x^l) \|^2 \leq\epsilon$ in 
        \begin{align*}
        i =  \frac{1}{\bar{\beta}_T} \frac{2L\epsilon_0}{\epsilon} \quad \text{iterations.}
    \end{align*}
    \item \textbf{Convex:} If $F$ is also convex and there exists a positive constant $R$ such that $\| x^i-x^\star \|\leq R$, then  we find $F(x^i) - F(x^\star)\leq \epsilon$ in 
    \begin{align*}
        i = \frac{1}{\bar{\beta}_T} \frac{2L R^2}{\epsilon}\quad \text{iterations.}
    \end{align*}
    \item \textbf{Strongly-convex:} If $F$ is also $\mu$-strongly convex, then we find $F(x^i) - F(x^\star) \leq \epsilon$ in 
        \begin{align*}
        i = \frac{1}{\bar{\beta}_T}\kappa \log\left(\frac{\epsilon_0}{\epsilon}\right)\quad \text{iterations.}
    \end{align*}
\end{enumerate}

\end{theorem}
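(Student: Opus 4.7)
The plan is to mirror the proof of Theorem~\ref{thm:TopKSG_thm} exactly, replacing the descent inequality for pure sparsification by the one derived in Lemma~\ref{lemma:Descent_SpQ} for S+Q compression. The central observation is that, by the choice of step size $\gamma^i = \sqrt{\beta^i(T^i)}/(\sqrt{T^i}L)$ in Step~2 of iteration~\eqref{eqn:CATS+Q_iteration}, Lemma~\ref{lemma:Descent_SpQ} applies at every iteration (with $T = T^i$) and yields
\[
F(x^{i+1}) \leq F(x^i) - \frac{\beta^i(T^i)}{2L}\,\|\nabla F(x^i)\|^2.
\]
Invoking the lower-bound assumption $\beta^i(T^i)\geq \bar\beta_T$, this upgrades to the uniform per-iteration decrease
\[
F(x^{i+1}) \leq F(x^i) - \frac{\bar\beta_T}{2L}\,\|\nabla F(x^i)\|^2. \qquad (\ast)
\]
From here, the three claims follow by re-running the standard gradient-descent bookkeeping with the factor $1/(2L)$ replaced by $\bar\beta_T/(2L)$; the upper bound $T^i\leq T$ enters only implicitly, to ensure the existence of a single $\bar\beta_T$ that works across all iterations.

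For the non-convex case, I would telescope $(\ast)$ from $l=0$ to $i-1$ and use $F(x^i)\geq F^{\star}$ to obtain $\sum_{l=0}^{i-1}\|\nabla F(x^l)\|^2 \leq 2L\epsilon_0/\bar\beta_T$, whence $\min_l \|\nabla F(x^l)\|^2 \leq 2L\epsilon_0/(i\,\bar\beta_T)$; requiring this to be at most $\epsilon$ gives the claimed iteration count. For the convex case, I would combine $(\ast)$ with the standard bound $F(x^i)-F^{\star} \leq \|\nabla F(x^i)\| \cdot R$ (a consequence of convexity and $\|x^i-x^{\star}\|\leq R$) to reach the recursion $e_{i+1}\leq e_i - (\bar\beta_T/(2LR^2))\,e_i^2$ on $e_i = F(x^i)-F^{\star}$, which, by the usual telescoping-after-inversion trick $(1/e_{i+1}-1/e_i \geq \bar\beta_T/(2LR^2))$, yields $e_i \leq 2LR^2/(\bar\beta_T\, i)$.

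For the strongly-convex case, I would apply the inequality $\|\nabla F(x)\|^2 \geq 2\mu(F(x)-F^{\star})$ (implied by strong convexity) to $(\ast)$ to get $e_{i+1} \leq (1-\bar\beta_T/\kappa)\,e_i$, where $\kappa=L/\mu$. Iterating and using $1-\bar\beta_T/\kappa \leq \exp(-\bar\beta_T/\kappa)$ gives $e_i \leq \epsilon_0\exp(-\bar\beta_T\, i/\kappa)$, and solving $e_i\leq \epsilon$ produces the advertised $i = (1/\bar\beta_T)\kappa\log(\epsilon_0/\epsilon)$.

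I do not expect any real obstacle in this proof: once $(\ast)$ is in hand, everything reduces to standard first-order arguments. The only subtlety worth double-checking is that $\gamma^i$ depends on the iteration but the descent lemma is stated for each fixed pair $(T,\gamma)$, so one must verify that Step~2 of \eqref{eqn:CATS+Q_iteration} couples $\gamma^i$ to $T^i$ exactly in the way Lemma~\ref{lemma:Descent_SpQ} demands; this is immediate from the definition. I would also remark that $\bar\beta_T \in (0,\infty)$ is assumed rather than derived, in contrast to $\bar\alpha_T \geq T/d$ from Lemma~\ref{Lemma:LB_alpha}, which is why no analogue of the ``worst-case'' row of Table~\ref{Tab:SDG_IC} can be stated for S+Q without additional assumptions on $\beta(T)$.
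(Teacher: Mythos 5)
Your proposal is correct and follows essentially the same route as the paper's own proof: apply Lemma~\ref{lemma:Descent_SpQ} at each iteration (valid because Step~2 of \eqref{eqn:CATS+Q_iteration} sets $\gamma^i$ exactly as the lemma requires), lower-bound $\beta^i(T^i)$ by $\bar\beta_T$, and then run the standard telescoping, inversion-recursion, and linear-contraction arguments for the non-convex, convex, and strongly-convex cases respectively. Your closing remarks about the role of $T^i\leq T$ and the absence of a worst-case bound for $\beta(T)$ are also consistent with the paper.
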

\begin{proof}
See Appendix \ref{app:thm:S+Q_GD_thm}. 
\end{proof}

\subsection{Analysis for Stochastic Sparsification}

We prove iteration complexities of stochastic gradient descent with stochastic sparsification in the multi-node setting. 

\begin{theorem}\label{thm:mutlinode_SS_SGD}
%Suppose that each $f_l(\cdot)$ is $L$-smooth. 
Consider the minimization problem over the function $F(x)=(1/n)\sum_{l=1}^n f_l(x)$, where each $f_l(\cdot)$ is $L$-smooth. Let the iterates  $\{x^i\}_{i\in\mathbb{N}}$ generated by Algorithm~\eqref{eqn:CompressedSGD}, and  suppose that there exists $\bar{\omega}_T$ such that  $\omega_j^i(T_j^i)\geq \bar{\omega}_T$ where $\omega_j^i$ is the sparsification parameter of node $j$ at iteration $i$. Set $\epsilon^F_0 = F(x^0)-F(x^\star)$ and $\epsilon^X_0=\| x^0 - x^\star \|$.  Then, 

\begin{enumerate}[leftmargin=0.3in]
    \item \textbf{Non-convex} If 
    $$\gamma = \frac{\bar{\omega}_T}{2L}  \frac{1}{2\sigma^2/\epsilon+1}$$
   % $\gamma = \omega/(2(A+1) L)$ where $A=2\sigma^2/\epsilon$$
    then we find $\min_{l\in[0,i-1]} \mathbf{E}\| \nabla F(x^l) \|^2\leq\epsilon$ in 
    \begin{align*}
        i & = \frac{2}{\bar{\omega}_T} \left( 1 + \frac{2\sigma^2}{\epsilon} \right)\cdot\frac{2L\epsilon^F_0}{\epsilon}  \quad \text{iterations}.
    \end{align*}
    %where $\epsilon_0 = f(x^0) -f(x^\star)$.
    \item \textbf{Convex} If $F$ is convex and
    $$\gamma = \frac{\bar{\omega}_T}{2} \frac{1}{2\sigma^2/\epsilon+L} $$
    %$\gamma=\omega/(2(A+L))$ where $A=2\sigma^2/\epsilon$,
    then we find $\mathbf{E} \left[ F(\sum_{l=0}^{i-1} x^l/i) - F^\star \right] \leq \epsilon$
     \begin{align*}
        i & = \frac{2}{\bar{\omega}_T} \left( 1+\frac{2\sigma^2}{\epsilon L} \right) \frac{2L\epsilon^X_0}{\epsilon}  \quad \text{iterations}.
    \end{align*}
%    where $\epsilon_0=\| x^0 -x^\star \|$.
    \item \textbf{Strongly-convex} If $F$ is $\mu$-strongly convex and 
    $$ \gamma = \frac{\bar{\omega}_T}{2} \frac{1}{2\sigma^2/(\mu\epsilon)+L}$$
    %$\gamma = \bar{\omega}_T/(2(A+L))$ where $A=2\sigma^2/(\mu\epsilon)$,
    then we find $\mathbf{E}[F(x^i)- F(x^\star)] \leq \epsilon$ in 
    \begin{align*}
        i & = \frac{2}{\bar{\omega}_T} \kappa \left( 1+ \frac{2\sigma^2}{\mu\epsilon L}  \right) \log\left(  \frac{2\epsilon^F_0}{\epsilon}\right)  \quad \text{iterations}.
    \end{align*}
   % where $\varepsilon_0=\|x^0 - x^\star \|.$
\end{enumerate}

\iffalse
\begin{enumerate}[leftmargin=0.3in]
    \item     \textbf{Non-convex:} If $\gamma = 0.5\bar{\omega}_T/(L(2\sigma^2 /\epsilon+1))$   then we find $\min_{l\in[0,i-1]} \mathbf{E}\| \nabla F(x^l) \|^2\leq\epsilon$ in 
    \begin{align*}
        i & =  \left( 1+2\sigma^2/\epsilon \right) \left( 2\vartheta_0/\bar\omega \right) 
        \quad \text{iterations}.
    \end{align*}
    \item \textbf{Convex:} If $F$ is  convex and $\gamma = 0.5\bar{\omega}_T/(2\sigma^2/\epsilon+L)$    then we find $\mathbf{E} \left[ F(\bar x^i) - F^\star \right] {\leq} \epsilon$ in
     \begin{align*}
        i & = (1+2\sigma^2/(\epsilon L)) (2\nu_0/\bar\omega)
        \quad \text{iterations}.
    \end{align*}     \item \textbf{Strongly-convex:} If $F$ is  $\mu$-strongly convex and $\gamma= 0.5\bar{\omega}_T/(2\sigma^2/(\mu\epsilon)+L)$    then we find $\mathbf{E}\| x^i - x^\star \|^2\leq \epsilon$ in 
    \begin{align*}
        i & = (1+2\sigma^2/(\mu\epsilon L)) (2 \kappa \zeta_0/\bar\omega)  
        \quad \text{iterations}.
    \end{align*} 
\end{enumerate}
\fi
\end{theorem}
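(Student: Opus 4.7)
The plan is to prove this by first establishing a multi-node, multi-source-of-randomness descent inequality, and then feeding it into standard SGD-style convergence recursions for each of the three cases.

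First, I would establish the key descent inequality. Let $\bar{Q}^i = (1/n)\sum_{j=1}^n Q_{T_j^i}(g_j(x^i;\xi_j^i))$, so that the iteration reads $x^{i+1} = x^i - \gamma \bar{Q}^i$. Applying $L$-smoothness of $F$ and taking conditional expectation given $x^i$, the cross term collapses to $-\gamma\|\nabla F(x^i)\|^2$ because both $Q_{T,p}$ and $g_j$ are unbiased, giving $\mathbf{E}[\bar{Q}^i\mid x^i]=\nabla F(x^i)$. For the quadratic term I would use the bias-variance decomposition $\mathbf{E}\|\bar{Q}^i\|^2 = \|\nabla F(x^i)\|^2 + \operatorname{Var}(\bar{Q}^i)$ and then exploit independence across the nodes to write $\operatorname{Var}(\bar{Q}^i) = (1/n^2)\sum_j \operatorname{Var}(Q_{T_j^i}(g_j))$. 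Each per-node variance is controlled through the sparsification parameter by the layered bound $\mathbf{E}\|Q_{T_j^i}(g_j)\|^2 \leq (1/\bar{\omega}_T)\mathbf{E}_\xi\|g_j\|^2$, which together with the bounded-variance assumption ($\mathbf{E}_\xi\|g_j-\nabla F\|^2\leq \sigma^2$ implying $\mathbf{E}_\xi\|g_j\|^2 \leq \|\nabla f_j\|^2+\sigma^2$ and $\|\nabla f_j\|^2 \leq 2\|\nabla F\|^2+2\sigma^2$) will yield a clean bound of the form $\mathbf{E}\|\bar{Q}^i\|^2 \leq (1/\bar{\omega}_T)\|\nabla F(x^i)\|^2 + (C/\bar{\omega}_T)\sigma^2$ for an absolute constant $C$. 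Plugging this back gives the master inequality
\begin{equation*}
   \mathbf{E}F(x^{i+1}) \leq F(x^i) - \gamma\bigl(1 - L\gamma/(2\bar{\omega}_T)\bigr)\|\nabla F(x^i)\|^2 + \frac{CL\gamma^2\sigma^2}{2\bar{\omega}_T}.
\end{equation*}

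For the \textbf{non-convex} case I would pick $\gamma$ so that the bracket on $\|\nabla F\|^2$ equals $\gamma/2$ while keeping the noise term small, namely $\gamma = (\bar{\omega}_T/(2L))(1+2\sigma^2/\epsilon)^{-1}$. Telescoping over $i$ iterations and dividing through then produces $\min_{l<i}\mathbf{E}\|\nabla F(x^l)\|^2 \leq \epsilon$ after $i = (2/\bar{\omega}_T)(1+2\sigma^2/\epsilon)(2L\epsilon_0^F/\epsilon)$ iterations, matching the stated count. For the \textbf{convex} case, I would instead form the recursion on $\mathbf{E}\|x^{i+1}-x^\star\|^2$ starting from $\|x^{i+1}-x^\star\|^2 = \|x^i-x^\star\|^2 - 2\gamma\langle \nabla F(x^i), x^i-x^\star\rangle + \gamma^2\|\bar{Q}^i\|^2$, then apply convexity $\langle \nabla F(x^i), x^i-x^\star\rangle \geq F(x^i)-F^\star$, the master variance bound with $\|\nabla F\|^2 \leq 2L(F-F^\star)$ from smoothness, and choose $\gamma = (\bar{\omega}_T/2)(2\sigma^2/\epsilon + L)^{-1}$. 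Telescoping and averaging the iterates gives the Jensen-based bound $\mathbf{E}[F(\bar{x}^i)-F^\star]\leq\epsilon$ in the claimed number of iterations. For the \textbf{strongly-convex} case, I would combine the master descent inequality with $\|\nabla F(x^i)\|^2 \geq 2\mu(F(x^i)-F^\star)$ (the Polyak–Łojasiewicz consequence of strong convexity) to obtain a one-step contraction of the form $\mathbf{E}[F(x^{i+1})-F^\star] \leq (1-\mu\gamma/2)\mathbf{E}[F(x^i)-F^\star] + CL\gamma^2\sigma^2/(2\bar{\omega}_T)$, then iterate this linear recursion with $\gamma = (\bar{\omega}_T/2)(2\sigma^2/(\mu\epsilon)+L)^{-1}$. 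The recursion yields geometric decay of the bias part down to $\epsilon/2$ and a steady-state noise floor also at $\epsilon/2$, giving the claimed logarithmic iteration count.

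The main obstacle will be bookkeeping in step one: the sparsification noise, the stochastic-gradient noise, and the node averaging interact, so I must carefully invoke the tower property and the independence of the $\xi_j^i$ and sparsification randomness across $j$ to avoid picking up spurious cross terms, and then choose constants so that the resulting noise coefficient factors cleanly as $(1+C\sigma^2/\epsilon)/\bar{\omega}_T$. Once that single descent inequality is in hand, each of the three convergence-rate arguments is a textbook SGD calculation, with the only non-standard ingredient being the extra $1/\bar{\omega}_T$ factor inherited from the sparsification.
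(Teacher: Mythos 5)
Your proposal is correct and follows essentially the same route as the paper: a smoothness-based descent inequality with a second-moment bound on the averaged compressed stochastic gradient of the form $(\mathrm{const}/\bar\omega_T)\left(\|\nabla F\|^2+\sigma^2\right)$, followed by the standard telescoping argument (non-convex), the distance recursion plus Jensen on the averaged iterate (convex), and the PL-type linear contraction (strongly convex). The only notable difference is that you bound $\mathbf{E}\|\bar Q^i\|^2$ via a bias--variance decomposition with independence across nodes, whereas the paper uses the cruder Cauchy--Schwarz bound $\left\|\frac{1}{n}\sum_j a_j\right\|^2\le\frac{1}{n}\sum_j\|a_j\|^2$; both deliver the stated rates up to universal constants, which neither derivation pins down exactly (the paper's own non-convex calculation yields $4/\bar\omega_T$ where the statement has $2/\bar\omega_T$).
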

\begin{proof}
See Appendix \ref{app:thm:mutlinode_SS_SGD}.
\end{proof}

\section{Proof of Theorem \ref{thm:TopKSG_thm}} \label{app:thm:TopKSG_thm}

In this section, we derive iteration complexities of gradient descent with deterministic sparsification.

\subsection*{Proof of Theorem \ref{thm:TopKSG_thm}-1}

By recursively applying the inequality from Lemma \ref{lemma:Descent_sparse} with $x^+ = x^{i+1}$ and $x=x^i$, we have 
\begin{align*}
    \min_{l\in[0,i-1]} \| \nabla F(x^l) \|^2 \leq \frac{2L}{i} \sum_{l=0}^{i-1} \frac{1}{\alpha^i(T)} [ F(x^l) - F(x^{l+1}) ]
\end{align*}
where the inequality follows from the fact that  $\min_{l\in[0,i-1]} \| \nabla F(x^l) \|^2 \leq \sum_{l=0}^{i-1} \| \nabla F(x^l) \|^2/i$. If there exists $\bar \alpha_T$ such that $\alpha^i(T) \geq \bar\alpha_T$, then 
\begin{align*}
    \min_{l\in[0,i-1]} \| \nabla F(x^l) \|^2 \leq \frac{2L}{i \bar\alpha_T} [F(x^0) - F(x^{i})]. 
\end{align*}
Since $F(x)\geq F(x^\star)$ for $x\in\mathbb{R}^d$, 
\begin{align*}
    \min_{l\in[0,i-1]} \| \nabla F(x^l) \|^2 \leq \frac{2L}{i \bar\alpha_T} \epsilon_0, 
\end{align*}
where $\epsilon_0 = F(x^0)-F(x^\star)$. This means that to reach $\epsilon$-accuracy (i.e. $\min_{l\in[0,i-1]} \| \nabla F(x^l) \|^2 \leq \epsilon$), the sparsified gradient method \eqref{Alg:main} needs at most % number of iterations $i$ which satisfies 
%
%
%\begin{align*}
%     \frac{2L}{i \bar\alpha_T} \epsilon_0 \leq \epsilon.
%\end{align*}
%By rearranging the terms, we have 
\begin{align*}
     i \geq \frac{1}{\bar\alpha_T}\frac{2L\epsilon_0}{\epsilon} ~~\text{iterations}.
\end{align*}

In addition, we recover the iteration complexities of the sparsified gradient method and of classical full gradient method when we let $\bar \alpha_T = T/d$ and $\bar \alpha_T = 1$, respectively.

\subsection*{Proof of Theorem \ref{thm:TopKSG_thm}-2}

Before deriving the result, we introduce one useful lemma. 
\begin{lemma}\label{lemma:TrickConvex}
The non-negative sequence $\{V^i\}_{k\in\mathbb{N}}$ generated by 
\begin{align}\label{eqn:TrickConvex}
V^{i+1} \leq V^i - q (V^i)^2, \quad \text{for} \quad q>0
\end{align}
satisfies 
\begin{align}
\frac{1}{V^i} \geq \frac{1}{V^0} + i q.
\end{align}
\end{lemma}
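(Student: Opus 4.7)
The plan is to derive the bound by a one-step inequality followed by telescoping. Observing that the recurrence $V^{i+1} \leq V^i - q(V^i)^2$ together with $q>0$ and $V^i \geq 0$ forces $V^{i+1} \leq V^i$, so $\{V^i\}$ is monotone non-increasing. The degenerate case when $V^i=0$ for some $i$ can be set aside: the monotonicity and non-negativity then force $V^j=0$ for all $j \geq i$, and the stated inequality is vacuous under the natural convention $1/0 = +\infty$. So in what follows I would assume $V^i>0$ throughout.

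The key step is to divide the recurrence by the positive quantity $V^i V^{i+1}$ to obtain
$$\frac{1}{V^i} \leq \frac{1}{V^{i+1}} - q\,\frac{V^i}{V^{i+1}}.$$
Using $V^{i+1} \leq V^i$ (so $V^i/V^{i+1} \geq 1$) and rearranging gives the clean one-step bound
$$\frac{1}{V^{i+1}} \geq \frac{1}{V^i} + q.$$

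A straightforward induction on $i$ (telescoping the one-step bound from $0$ to $i-1$) then yields the claim $1/V^i \geq 1/V^0 + iq$. There is no real obstacle here; the only point requiring slight care is the handling of the degenerate zero case above and the verification that $V^{i+1}>0$ whenever $V^i>0$, which follows because if $V^{i+1}=0$ the target inequality $1/V^{i+1} \geq 1/V^i + q$ holds trivially and the induction can be continued by the convention mentioned.
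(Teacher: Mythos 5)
Your proposal is correct and follows essentially the same route as the paper's proof: divide the recurrence by $V^i V^{i+1}$, use the monotonicity $V^{i+1}\leq V^i$ to bound the ratio $V^i/V^{i+1}\geq 1$, and telescope the resulting one-step bound $1/V^{i+1}\geq 1/V^i+q$. Your extra care with the degenerate case $V^i=0$ is a minor tidying of a point the paper leaves implicit.
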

\begin{proof}
By the fact that $x^2\geq 0$ for $x\in\mathbb{R}$, clearly $V^{i+1}\leq V^i$. By the proper manipulation, we rearrange the terms in Equation \eqref{eqn:TrickConvex} as follows:
\begin{align*}
\frac{1}{V^{i+1}} - \frac{1}{V^i} \geq q \frac{V^i}{V^{i+1}} \geq q,
\end{align*}
where the last inequality follows from the fact that $V^{i+1}\leq V^i$. By the recursion, we complete the proof.
\end{proof}

By Lemma \ref{lemma:Descent_sparse} with $x^+ = x^{i+1}$ and $x=x^i$, we have 
\begin{align*}
    F(x^{i+1}) \leq F(x^i) - \frac{\alpha^i(T)}{2L} \| \nabla F(x^i) \|^2
\end{align*}

Since $F$ is convex, i.e. 
\begin{align*}
 F(x) -F(x^\star) \leq \langle \nabla F(x), x - x^\star \rangle, \ \text{for} \ x\in\mathbb{R}^d
\end{align*}
by Cauchy-Scwartz's inequality and assuming that the iteration satisfies $\| x - x^\star \| \leq R$ for $R>0$ and $x\in\mathbb{R}^d$, 
\begin{align*}
  \|\nabla F(x)\| \geq \frac{1}{R}\left[ F(x) -F(x^\star) \right].
\end{align*}
Plugging this inequality into the main result, we have 
\begin{align*}
    V^{i+1} \leq V^i - \frac{\alpha^i(T)}{2L R^2} (V^i)^2, 
\end{align*}
where $V^i = F(x^i) - F(x^\star).$ If there exists $\bar \alpha_T$ such that $\alpha^i(T) \geq \bar\alpha_T$, then by Lemma \ref{lemma:TrickConvex} and by using the fact that $V^0\geq 0$ 
\begin{align*}
V^i \leq  \frac{1}{\bar\alpha_T}\frac{2LR^2}{i}.
\end{align*}
To reach $F(x^i) - F(x^\star) \leq\epsilon$, the sparsified gradient methods needs the number of iterations $i$ satisfying 
\begin{align*}
    i \geq \frac{1}{\bar\alpha_T}\frac{2LR^2}{\epsilon}.
\end{align*}
We also recover the iteration complexities of the sparsified gradient method and of classical full gradient method when we let $\bar \alpha_T = T/d$ and $\bar \alpha_T = 1$, respectively.

\subsection*{Proof of Theorem \ref{thm:TopKSG_thm}-3}
By Lemma \ref{lemma:Descent_sparse} with $x^+ = x^{i+1}$ and $x=x^i$, we have 
\begin{align*}
    F(x^i) \leq F(x^i) - \frac{\alpha^i(T)}{2L} \| \nabla F(x^i) \|^2.
\end{align*}
Since $F$ is $\mu$-strongly convex, i.e. $\| \nabla F(x) \|^2 \geq 2\mu [F(x) - F(x^\star)]$ for $x\in\mathbb{R}^d$, applying this inequality into the main result we have
\begin{align*}
     F(x^{i+1}) - F(x^\star) \leq  \left( 1 - \frac{\mu \alpha^i(T)}{L} \right) [F(x^i) - F(x^\star)].
\end{align*}
If there exists $\bar \alpha_T$ such that $\alpha^i(T) \geq \bar\alpha_T$, then by the recursion we get
\begin{align*}
     F(x^{i}) - F(x^\star) \leq  \left( 1 - \frac{\mu \alpha_T}{L} \right) ^i \epsilon_0,
\end{align*}
where $\epsilon_0= F(x^0) - F(x^\star)$. To reach $F(x^i) - F(x^\star) \leq\epsilon$, the sparsified gradient methods requires the number of iterations $i$ satisfying 
\begin{align*}
    \left( 1 - \frac{\mu \alpha_T}{L} \right) ^i \epsilon_0 \leq \epsilon.
\end{align*}
Taking the logarithm  on both sides of the inequality and using the fact that $-1/\log(1-x)\leq 1/x$ for $0<x\leq 1$, we have 
\begin{align*}
    i \geq  \frac{1}{\bar\alpha_T}\kappa \log\left(  \frac{\epsilon_0}{\epsilon} \right).
\end{align*}
We also recover the iteration complexities of the sparsified gradient method and of classical full gradient method when we let $\bar \alpha_T = T/d$ and $\bar \alpha_T = 1$, respectively.

\section{Proof of Theorem \ref{thm:S+Q_GD_thm}} \label{app:thm:S+Q_GD_thm}
In this section, we prove the iteration complexities of gradient descent using dynamic sparsification together with quantization. 

%\subsection*{Proof of Non-Convex Optimization}
\subsection*{Proof of Theorem \ref{thm:S+Q_GD_thm}-1}

Suppose that there exist $T$ and $\bar{\beta}_T$  such that $T^i \leq T$ and $\beta^i(T^i)\geq \bar {\beta}_T$ for all $i$, respectively. Applying  Lemma \ref{lemma:Descent_SpQ} with $x^+ = x^{i+1}$ and $x=x^i$ we then have 
\begin{align*}
   \min_{l\in[0,i-1]} \| \nabla F(x^l) \|^2 \leq  \frac{2L}{\bar{\beta}_T i }\sum_{l=0}^{i-1} [F(x^l) - F(x^{l+1})],
\end{align*}
where the inequality results from the fact that  $\min_{l\in[0,i-1]} \| \nabla F(x^l) \|^2 \leq \sum_{l=0}^{i-1} \| \nabla F(x^l) \|^2/i$. By the cancellations of the telescopic series, and by the fact that $F(x)\geq F(x^\star)$ for $x\in\mathbb{R}^d$, 
\begin{align*}
    \min_{l\in[0,i-1]} \| \nabla F(x^l) \|^2 \leq \frac{2L}{\bar{\beta}_T i} \epsilon_0,
\end{align*}
where $\epsilon_0 = F(x^0)-F(x^\star)$. To reach $\min_{l\in[0,i-1]} \| \nabla F(x^l) \|^2\leq \epsilon$, gradient descent using sparsification and quantization needs at most
\begin{align*}
    i \geq \frac{2L}{\bar{\beta}_T} \frac{\epsilon_0}{\epsilon} \quad \text{iterations}. 
\end{align*}

%\subsection*{Proof of Convex Optimization}
\subsection*{Proof of Theorem \ref{thm:S+Q_GD_thm}-2}
Since $\beta^i(T^i) \geq \bar {\beta}_T$ for all $i$, applying  Lemma \ref{lemma:Descent_SpQ} with $x^+ = x^{i+1}$ and $x=x^i$ we have 
\begin{align*}
    [F(x^{i+1}) - F(x^\star)] \leq [F(x^i) - F(x^\star)] - \frac{\bar{\beta}_T}{2L}\|  \nabla F(x^i)\|^2.
\end{align*}
Since $F$ is convex, i.e. 
\begin{align*}
 F(x) -F(x^\star) \leq \langle \nabla F(x), x - x^\star \rangle, \ \text{for} \ x\in\mathbb{R}^d
\end{align*}
by Cauchy-Scwartz's inequality and assuming that the iteration satisfies $\| x - x^\star \| \leq R$ for $R>0$ and $x\in\mathbb{R}^d$, 
\begin{align*}
  \|\nabla F(x)\| \geq \frac{1}{R}\left[ F(x) -F(x^\star) \right].
\end{align*}
Plugging this inequality into the main result, we have 
\begin{align*}
V^i \leq  V^i - \frac{\bar{\beta}_T}{2LR^2}(V^i)^2,
\end{align*}
where $V^i = F(x^i)-F(x^\star)$. Applying  Lemma \ref{lemma:TrickConvex} with $V^0\geq 0$ into this inequality we get 
\begin{align*}
    V^i \leq \frac{1}{\bar{\beta}_T}\frac{2LR^2}{i}. 
\end{align*}

To reach $F(x^i) - F(x^\star) \leq\epsilon$, gradient descent using sparsification with quantization needs the number of iterations $i$ satisfying 
\begin{align*}
    i \geq \frac{1}{\bar{\beta}_T}\frac{2LR^2}{\epsilon}.
\end{align*}

%\subsection*{Proof of Strongly-Convex Optimization}
\subsection*{Proof of Theorem \ref{thm:S+Q_GD_thm}-3}
Since $\beta^i(T^i) \geq \bar {\beta}_T$ for all $i$, applying  Lemma \ref{lemma:Descent_SpQ} with $x^+ = x^{i+1}$ and $x=x^i$ we have
%By Lemma \ref{lemma:Descent_SpQ} with $x^+ = x^{i+1}$ and $x=x^i$, we have 
%
\begin{align*}
    F(x^i) \leq F(x^i) - \frac{\bar{\beta}_T}{2L} \| \nabla F(x^i) \|^2.
\end{align*}
Since $F$ is $\mu$-strongly convex, i.e. $\| \nabla F(x) \|^2 \geq 2\mu [F(x) - F(x^\star)]$ for $x\in\mathbb{R}^d$, applying this inequality into the main result we have
\begin{align*}
     F(x^{i+1}) - F(x^\star) \leq  \left( 1 - \frac{\mu \bar{\beta}_T}{L} \right) [F(x^i) - F(x^\star)].
\end{align*}
By the recursion, we get
\begin{align*}
     F(x^{i}) - F(x^\star) \leq  \left( 1 - \frac{\mu \bar{\beta}_T}{L} \right) ^i \epsilon_0,
\end{align*}
where $\epsilon_0= F(x^0) - F(x^\star)$. To reach $F(x^i) - F(x^\star) \leq\epsilon$, the sparsified gradient methods requires the number of iterations $i$ satisfying 
\begin{align*}
    \left( 1 - \frac{\mu \bar{\beta}_T}{L} \right) ^i \epsilon_0 \leq \epsilon.
\end{align*}
Taking the logarithm  on both sides of the inequality and using the fact that $-1/\log(1-x)\leq 1/x$ for $0<x\leq 1$, we have 
\begin{align*}
    i \geq  \frac{1}{\bar{\beta}_T}\kappa \log\left(  \frac{\epsilon_0}{\epsilon} \right).
\end{align*}

\section{Proof of Theorem \ref{thm:mutlinode_SS_SGD}} \label{app:thm:mutlinode_SS_SGD}

We prove the iteration complexities of multi-node stochastic gradient descent with
stochastic sparsification in Equation \eqref{eqn:CompressedSGD}.  We begin by introducing three useful lemmas for our analysis.

\begin{lemma}\label{lemma:trickBoundedVariance}
Let $\{x^i\}_{i\in\mathbb{N}}$ be the iterates  generated by Algorithm \eqref{eqn:CompressedSGD} and suppose that there exists  $\bar{\omega}_T$ such that  $\omega_j^i(T_j^i)\geq \bar{\omega}_T$ where $\omega_j^i$ is the sparsification parameter of node $j$ at iteration $i$. Then,
\begin{align*}
    \mathbf{E}\left\| \frac{1}{n}\sum_{j=1}^n Q_{T^i_j} \left( g_j(x^i;\xi_j^i) \right) \right\|^2 \leq (2/\bar\omega_T) \left( \mathbf{E}\| \nabla F(x^i)\|^2 + \sigma^2 \right).
\end{align*}
\end{lemma}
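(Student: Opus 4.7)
The strategy is to condition first on the stochastic gradients $g_j := g_j(x^i;\xi_j^i)$ so that only the compressor randomness remains, absorb that randomness using the hypothesis on $\omega_j^i$, and then peel off the outer expectation with respect to the stochastic-gradient noise.

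First I would exploit two structural facts about the stochastic sparsifier: conditionally on $g_j$, the compressor is unbiased, i.e. $\mathbf{E}_Q[Q_{T_j^i}(g_j)\mid g_j]=g_j$, and its randomness is independent across the $n$ nodes (each node samples its own Bernoulli mask). A standard bias-variance decomposition then gives
$$
\mathbf{E}_Q\Big\|\tfrac{1}{n}\sum_{j=1}^n Q_{T_j^i}(g_j)\Big\|^2 \,=\, \Big\|\tfrac{1}{n}\sum_{j=1}^n g_j\Big\|^2 + \tfrac{1}{n^2}\sum_{j=1}^n \mathbf{E}_Q\|Q_{T_j^i}(g_j)-g_j\|^2.
$$

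Next, I would control the per-node compressor-variance term through the identity $\mathbf{E}_Q\|Q_{T_j^i}(g_j)-g_j\|^2 = \mathbf{E}_Q\|Q_{T_j^i}(g_j)\|^2 - \|g_j\|^2$ (valid because $Q$ is unbiased) together with the hypothesis $\omega_j^i(T_j^i)\ge\bar\omega_T$, which by the definition of $\omega$ rearranges to $\mathbf{E}_Q\|Q_{T_j^i}(g_j)\|^2\le \|g_j\|^2/\bar\omega_T$. This delivers the per-node bound $(1/\bar\omega_T - 1)\|g_j\|^2$, which I would weaken to $\|g_j\|^2/\bar\omega_T$ so everything is expressed in the single constant $\bar\omega_T$.

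Finally, I would take the outer expectation over the stochastic-gradient randomness. For the first piece, I use $\mathbf{E}[(1/n)\sum_j g_j\mid x^i]=\nabla F(x^i)$ together with independence of the $g_j$'s across nodes to get $\mathbf{E}\|\tfrac{1}{n}\sum_j g_j\|^2 \le \mathbf{E}\|\nabla F(x^i)\|^2 + \sigma^2/n$ via the bounded-variance assumption. For the second piece, I split $\|g_j\|^2 \le 2\|g_j-\nabla F(x^i)\|^2 + 2\|\nabla F(x^i)\|^2$ and again invoke the variance bound to obtain $\mathbf{E}\|g_j\|^2 \le 2\sigma^2 + 2\mathbf{E}\|\nabla F(x^i)\|^2$. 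Combining the two pieces and using $\bar\omega_T\le 1$ (so $1\le 1/\bar\omega_T$) to fold the coefficients together should yield exactly $(2/\bar\omega_T)(\mathbf{E}\|\nabla F(x^i)\|^2 + \sigma^2)$.

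The main obstacle is the constant-chasing at the end: the sample-mean averaging gives a $1/n$ improvement on the stochastic-gradient variance, while the per-node compressor term has a $1/n^2\cdot n = 1/n$ factor, so the coefficients do not line up in the obvious way and must be bounded by $2/\bar\omega_T$ only after using $\bar\omega_T\le 1$. A secondary subtlety is interpreting $\omega_j^i$ as the ratio $\|g_j\|^2/\mathbf{E}_Q\|Q_{T_j^i}(g_j)\|^2$ evaluated at the stochastic gradient actually being compressed, which is the natural reading since the optimal probability vector $p^\star$ is computed from that vector at iteration $i$.
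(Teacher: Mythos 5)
Your route is genuinely different from the paper's, and it is the sharper one in principle: you exploit unbiasedness of the sparsifier and independence of the compressor randomness across nodes to get the exact bias--variance decomposition
\begin{align*}
\mathbf{E}\Bigl\|\tfrac{1}{n}\textstyle\sum_{j}Q_{T_j^i}(g_j)\Bigr\|^2=\mathbf{E}\Bigl\|\tfrac{1}{n}\textstyle\sum_{j}g_j\Bigr\|^2+\tfrac{1}{n^2}\textstyle\sum_{j}\mathbf{E}\|Q_{T_j^i}(g_j)-g_j\|^2,
\end{align*}
which buys a $1/n$ variance reduction the paper's bound does not have. The paper instead uses a one-line argument that needs neither unbiasedness of $Q$ nor cross-node independence: convexity of $\|\cdot\|^2$ (Jensen) gives $\mathbf{E}\|\tfrac{1}{n}\sum_j Q_{T_j^i}(g_j)\|^2\le\tfrac{1}{n}\sum_j\mathbf{E}\|Q_{T_j^i}(g_j)\|^2$, then the identity $\mathbf{E}\|Q_{T_j^i}(g_j)\|^2=\|g_j\|^2/\omega_j^i(T_j^i)\le\|g_j\|^2/\bar\omega_T$, then $\|a+b\|^2\le2\|a\|^2+2\|b\|^2$ with $a=g_j-\nabla F(x^i)$, $b=\nabla F(x^i)$, and the variance bound. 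That yields $(2/\bar\omega_T)(\mathbf{E}\|\nabla F(x^i)\|^2+\sigma^2)$ directly for every $n\ge1$.

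There is, however, one concrete flaw in your constant-chasing: the step where you weaken the per-node compressor variance from $(1/\bar\omega_T-1)\|g_j\|^2$ to $\|g_j\|^2/\bar\omega_T$ breaks the case $n=1$. With that weakening your final bound is $(1+2/(n\bar\omega_T))\mathbf{E}\|\nabla F(x^i)\|^2+\tfrac{1}{n}(1+2/\bar\omega_T)\sigma^2$, which at $n=1$ equals $(1+2/\bar\omega_T)(\mathbf{E}\|\nabla F(x^i)\|^2+\sigma^2)$ and strictly exceeds the claimed $(2/\bar\omega_T)(\cdot)$. If you keep the exact factor $1/\bar\omega_T-1$, the coefficients do close for all $n\ge1$ (at $n=1$ you get exactly $(2/\bar\omega_T-1)(\mathbf{E}\|\nabla F(x^i)\|^2+\sigma^2)$), so the fix is simply not to discard the $-1$. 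Everything else in your outline, including the reading of $\omega_j^i$ as $\|g_j\|^2/\mathbf{E}_Q\|Q_{T_j^i}(g_j)\|^2$ evaluated at the compressed stochastic gradient, matches the paper's intent.
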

\begin{proof}
Since $\mathbf{E}\| Q_{T_j^i}(g_j(x^i;\xi_j^i)) \|^2 = \| g_j(x^i;\xi_j^i) \|^2/\omega_j^i(T_j^i)$, by using Cauchy-Scwartz's inequality and by the fact that $\omega_j^i(T_j^i)\geq \bar{\omega}_T$ where $\omega_j^i$ is the sparsification level of node $j$ at iteration $i$, we have
\begin{align*}
    \mathbf{E}\left\| \frac{1}{n}\sum_{j=1}^n Q_{T^i_j} \left( g_j(x^i;\xi_j^i) \right) \right\|^2 \leq \frac{1}{\bar\omega_T}\frac{1}{n} \sum_{j=1}^n \mathbf{E} \left\|   g_j(x^i;\xi_j^i)  \right\|^2. 
\end{align*}
After utilizing the inequality $\| x+y\|^2 \leq 2\|x\|^2 + 2\|y\|^2$ with $x =  g_j(x^i;\xi_j^i)  -\nabla F(x^i)$ and $y = \nabla F(x^i)$,
\begin{align*}
    \mathbf{E}\left\| \frac{1}{n}\sum_{j=1}^n Q_{T^i_j} \left( g_j(x^i;\xi_j^i) \right) \right\|^2  \leq \frac{2}{\bar\omega_T n} \sum_{j=1}^n  \left( T + \mathbf{E}\| \nabla F(x^i)\|^2 \right).
\end{align*}
where $T = \mathbf{E} \left\|   g_j(x^i;\xi_j^i)  -\nabla F(x^i) \right\|^2$.
By the bounded variance assumption  (i.e. $\mathbf{E}\| g_j(x;\xi_j) - \nabla F(x)\|^2\leq\sigma^2$), we complete the proof.

\end{proof}

\begin{lemma}\label{lemma:ProofForNC}
Suppose that each component function $f_j(\cdot)$ is $L$-smooth.
Let $\{x^i\}_{i\in\mathbb{N}}$ be the iterates  generated by Algorithm \eqref{eqn:CompressedSGD} and assume that there exists $\bar{\omega}_T$ such that  $\omega_j^i(T_j^i)\geq \bar{\omega}_T$ where $\omega_j^i$ is the sparsification parameter of node $j$ at iteration $i$. Then,
\begin{align*}
   \mathbf{E} F(x^{i+1}) 
    &\leq \mathbf{E} F(x^i) - \left(\gamma - (L/\bar\omega_T) \gamma^2 \right)\mathbf{E} \| \nabla F(x^i) \|^2   + (L/\bar\omega_T) \gamma^2 \sigma^2.
\end{align*}
\end{lemma}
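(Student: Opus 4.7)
The plan is to follow the textbook stochastic descent argument, treating the compressed stochastic gradient $\tilde g^i := \frac{1}{n}\sum_{j=1}^n Q_{T_j^i}(g_j(x^i;\xi_j^i))$ as an unbiased estimator of $\nabla F(x^i)$ with a second-moment bound given directly by Lemma~\ref{lemma:trickBoundedVariance}. Since each $f_j$ is $L$-smooth, so is $F=(1/n)\sum_j f_j$, and applying the standard descent inequality (Lemma 1.2.3 of \cite{nesterov2018lectures}) to the iterate $x^{i+1}=x^i-\gamma \tilde g^i$ yields
$$ F(x^{i+1}) \leq F(x^i) - \gamma \langle \nabla F(x^i), \tilde g^i \rangle + \tfrac{L\gamma^2}{2} \| \tilde g^i \|^2.$$

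Next I would take total expectation of both sides. For the inner-product term I would condition on $x^i$ and exploit two levels of unbiasedness: the stochastic sparsifier $Q_{T,p}(\cdot)$ defined in Eq.~\eqref{eq:StochSpar} satisfies $\mathbf{E}[Q_{T,p}(v)]=v$ for any vector $v$, and the stochastic gradients satisfy $\mathbf{E}_{\xi_j} g_j(x;\xi_j)=\nabla f_j(x)$. Combining by the tower property gives $\mathbf{E}[\tilde g^i \mid x^i]=\nabla F(x^i)$, so the cross term contributes exactly $-\gamma \,\mathbf{E}\|\nabla F(x^i)\|^2$.

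For the quadratic term I would invoke Lemma~\ref{lemma:trickBoundedVariance} directly, which under the assumption $\omega_j^i(T_j^i)\geq \bar\omega_T$ gives
$$ \mathbf{E}\|\tilde g^i\|^2 \leq (2/\bar\omega_T)\bigl(\mathbf{E}\|\nabla F(x^i)\|^2 + \sigma^2\bigr).$$
Substituting this bound back and grouping the $\mathbf{E}\|\nabla F(x^i)\|^2$ coefficients produces the claimed inequality
$$\mathbf{E} F(x^{i+1}) \leq \mathbf{E} F(x^i) - \bigl(\gamma - (L/\bar\omega_T)\gamma^2\bigr)\mathbf{E}\|\nabla F(x^i)\|^2 + (L/\bar\omega_T)\gamma^2 \sigma^2.$$

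There is no real obstacle here — Lemma~\ref{lemma:trickBoundedVariance} has already absorbed all the delicate work of handling the interaction between the compression noise and the stochastic-gradient noise across the $n$ nodes, so this lemma is essentially a one-line consequence of smoothness plus that bound. The only thing to be careful about is the order of expectations and the fact that the sparsity budgets $T_j^i$ may themselves be data-dependent (they are chosen by the CAT rule applied to $g_j(x^i;\xi_j^i)$); however, since the bound $\omega_j^i(T_j^i)\geq\bar\omega_T$ holds deterministically for every realization, this causes no difficulty when passing to expectation.
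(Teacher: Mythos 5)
Your proposal is correct and follows essentially the same route as the paper's own proof: apply the $L$-smoothness descent inequality to $x^{i+1}=x^i-\gamma \tilde g^i$, use the two layers of unbiasedness for the cross term, and invoke Lemma~\ref{lemma:trickBoundedVariance} for the second moment. Your added remark that the data-dependence of $T_j^i$ is harmless because $\omega_j^i(T_j^i)\geq\bar\omega_T$ holds pathwise is a point the paper glosses over, but it does not change the argument.
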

\begin{proof}
By Cauchy-Scwartz's inequality, we can easily show that $F(x)=(1/n)\sum_{j=1}^n f_j(x)$ is also $L$-smooth. From the smoothness assumption (Lemma 1.2.3. of \cite{nesterov2018lectures}) and Equation \eqref{eqn:CompressedSGD}, 
\begin{align*}
    F(x^{i+1}) \leq F(x^i) - \gamma \left\langle \nabla F(x^i) , g^i  \right\rangle  + \frac{L\gamma^2}{2} \left\| g^i \right\|^2
\end{align*}
where $g^i = (1/n)\sum_{j=1}^n Q_{T^i_j} \left( g_j(x^i;\xi_j^i) \right)$.
Taking the expectation, and using Lemma \ref{lemma:trickBoundedVariance} and the unbiased properties of stochastic gradient $g_j(\cdot)$ and stochastic sparsification $Q_{T}(\cdot)$, we have
\begin{align*}
    \mathbf{E} F(x^{i+1}) 
    &\leq \mathbf{E} F(x^i) - \left(\gamma - (L/\bar\omega_T) \gamma^2 \right)\mathbf{E} \| \nabla F(x^i) \|^2   + (L/\bar\omega_T) \gamma^2 \sigma^2.
\end{align*}
\end{proof}

\begin{lemma}\label{lemma:MainCSC}
Suppose that each component function $f_j(\cdot)$ is $L$-smooth and $F$ is convex.
Let $\{x^i\}_{i\in\mathbb{N}}$ be the iterates  generated by Algorithm \eqref{eqn:CompressedSGD} and assume that there exists $\bar{\omega}_T$ such that  $\omega_j^i(T_j^i)\geq \bar{\omega}_T$ where $\omega_j^i$ is the sparsification parameter of node $j$ at iteration $i$. Then,
\begin{align*}
    \mathbf{E} \Vert x^{i+1} - x^\star \Vert^2 
    &\leq \mathbf{E} \Vert x^{i} - x^\star \Vert^2  + 2\gamma^2\sigma^2/\bar\omega_T - 2(\gamma - L\gamma^2/\bar\omega_T )\mathbf{E} \left\langle \nabla F(x^i), x^i - x^\star \right\rangle.
\end{align*}
\end{lemma}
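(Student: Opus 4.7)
The plan is to expand the squared distance $\|x^{i+1}-x^\star\|^2$ using the update rule, take the total expectation, and control the resulting cross-term and squared-norm term separately. Writing $g^i = (1/n)\sum_{j=1}^n Q_{T_j^i}(g_j(x^i;\xi_j^i))$ so that $x^{i+1}=x^i-\gamma g^i$, the standard expansion $\|x^{i+1}-x^\star\|^2 = \|x^i-x^\star\|^2 - 2\gamma\langle g^i, x^i-x^\star\rangle + \gamma^2\|g^i\|^2$ reduces the problem to evaluating $\mathbf{E}\langle g^i,x^i-x^\star\rangle$ and bounding $\mathbf{E}\|g^i\|^2$.

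For the cross term I would condition on $x^i$ and use that each $g_j(x^i;\xi_j^i)$ is an unbiased estimator of $\nabla f_j(x^i)$ (by the assumption $\mathbf{E}_\xi g_j(x;\xi_j)=\nabla f_j(x)$) and that each stochastic sparsifier $Q_{T_j^i,p}(\cdot)$ from \eqref{eq:StochSpar} is unbiased in expectation over its Bernoulli draws; together with $F=(1/n)\sum_j f_j$, this yields $\mathbf{E}[g^i\mid x^i] = \nabla F(x^i)$ and hence $\mathbf{E}\langle g^i, x^i-x^\star\rangle = \mathbf{E}\langle \nabla F(x^i), x^i-x^\star\rangle$. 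For the squared-norm term I would invoke Lemma~\ref{lemma:trickBoundedVariance} directly, which gives $\mathbf{E}\|g^i\|^2 \leq (2/\bar\omega_T)\bigl(\mathbf{E}\|\nabla F(x^i)\|^2 + \sigma^2\bigr)$.

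The main obstacle is that the resulting bound still contains $\mathbf{E}\|\nabla F(x^i)\|^2$, which is not present in the claim. To absorb it into the inner-product term I would use the co-coercivity of the gradient of a convex $L$-smooth function: since $F$ is convex (by hypothesis) and $L$-smooth (as an average of $L$-smooth functions, by the triangle inequality applied to the defining Lipschitz condition), and since $\nabla F(x^\star)=0$, the standard inequality (Theorem 2.1.5 of \cite{nesterov2018lectures}) gives $\|\nabla F(x^i)\|^2 \leq L\,\langle \nabla F(x^i), x^i-x^\star\rangle$. Plugging this into the $\gamma^2$ term and combining with the $-2\gamma\,\mathbf{E}\langle\nabla F(x^i),x^i-x^\star\rangle$ cross term produces the coefficient $-2(\gamma - L\gamma^2/\bar\omega_T)$ in front of the inner product, and leaves the isolated noise term $2\gamma^2\sigma^2/\bar\omega_T$, which is exactly the stated bound. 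Beyond this co-coercivity step the argument is purely algebraic, so I expect no further difficulties.
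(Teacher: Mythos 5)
Your proof is correct and follows essentially the same route as the paper: expand the squared distance, use unbiasedness of the stochastic gradients and the sparsifier for the cross term, invoke Lemma~\ref{lemma:trickBoundedVariance} for $\mathbf{E}\|g^i\|^2$, and then absorb the resulting $\mathbf{E}\|\nabla F(x^i)\|^2$ into the inner-product term via co-coercivity with $\nabla F(x^\star)=0$. You are in fact slightly more careful than the paper's own proof, which attributes the co-coercivity inequality to $L$-smoothness alone, whereas (as you note) it also requires the convexity of $F$ that the lemma assumes.
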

\begin{proof}
From the definition of the Euclidean norm and Equation \eqref{eqn:CompressedSGD}, we have 
\begin{align*}
    \Vert x^{i+1} - x^\star \Vert^2 = \Vert x^{i} - x^\star \Vert^2 - 2\gamma \left\langle g^i, x^i - x^\star \right\rangle + \gamma^2 \| g^i \|^2,
\end{align*}
where $g^i = (1/n)\sum_{j=1}^n Q_{T^i_j} \left( g_j(x^i;\xi_j^i) \right)$. Taking the expectation,  and using Lemma \ref{lemma:trickBoundedVariance} and the unbiased properties of stochastic gradient $g_j(\cdot)$ and stochastic sparsification $Q_{T}(\cdot)$, we have
\begin{align*}
    \mathbf{E} \Vert x^{i+1} - x^\star \Vert^2 
    &\leq \mathbf{E} \Vert x^{i} - x^\star \Vert^2+ (2/\bar\omega_T)\gamma^2\sigma^2 \\ 
    &\hspace{0.4cm} - 2\gamma \mathbf{E} \left\langle \nabla F(x^i), x^i - x^\star \right\rangle  \\
    &\hspace{0.4cm}+ (2/\bar\omega_T)\gamma^2 \mathbf{E}\| \nabla F(x^i) \|^2 
\end{align*}
Since $F(\cdot)$ is $L$-smooth, i.e. for $x\in\mathbb{R}^d$
$$
\| \nabla F(x) - \nabla F(y) \|^2 \leq L \left\langle \nabla F(x) -  \nabla F(y) , x -y \right\rangle,
$$
applying this inequality with $x=x^i$ and $y=x^\star$ into the main result and 
recalling that $\nabla F(x^\star) =0$ we complete the proof. 
\end{proof}

Now, we prove the main results for Algorithm \eqref{eqn:CompressedSGD}.

\subsection*{Proof of Theorem \ref{thm:mutlinode_SS_SGD}-1.}
If $\gamma < \bar\omega_T/L$, rearranging the terms from  Lemma \ref{lemma:trickBoundedVariance},  we get
\begin{align*}
   \mathbf{E} \| \nabla F(x^i) \|^2  & \leq  \frac{1}{\gamma - (L/\bar\omega_T) \gamma^2}\left( \mathbf{E} F(x^i) -  \mathbf{E} F(x^{i+1}) \right) + \frac{(L/\bar\omega_T) \gamma }{1 - (L/\bar\omega_T) \gamma}\sigma^2.
\end{align*}
Since $\min_{l\in[0,i-1]} \mathbf{E}\| \nabla F(x^l) \|^2 \leq \sum_{l=0}^{i-1} \mathbf{E}\| \nabla F(x^l) \|^2/i$, we obtain
\begin{align*}
   &\min_{l\in[0,i-1]} \mathbf{E}\| \nabla F(x^l) \|^2  \leq  \frac{1}{ i[ \gamma - (L/\bar\omega_T) \gamma^2]}\left( \mathbf{E} F(x^0) -  \mathbf{E} F(x^{i}) \right)  +T,
\end{align*}
where $T=\sigma^2\cdot{(L/\bar\omega_T) \gamma }/{[1 - (L/\bar\omega_T) \gamma]}$.
By the fact that $F(x)\geq F(x^\star)$ for $x\in\mathbb{R}^d$, we have 
\begin{align*}
   &\min_{l\in[0,i-1]} \mathbf{E}\| \nabla F(x^l) \|^2 \leq  \frac{1}{ i[ \gamma - (L/\bar\omega_T) \gamma^2]}\epsilon_0  +T,
\end{align*}
where $\epsilon_0=F(x^0)-F(x^\star)$. 
If the step-size is 
$$\gamma = \frac{\bar\omega_T}{2L}\frac{1}{2\sigma^2/\epsilon+1},$$
then clearly $\gamma < \bar\omega/L$ and 
$$
 \frac{(L/\bar\omega_T) \gamma }{1 - (L/\bar\omega_T) \gamma}\sigma^2 \leq \frac{\epsilon}{2}.
$$
From  Lemma \ref{lemma:ProofForNC}, Algorithm \eqref{eqn:CompressedSGD} reaches $\min_{l\in[0,i-1]} \mathbf{E}\| \nabla F(x^l) \|^2 \leq\epsilon$ for the number of iterations $i$ which fulfills
%\begin{align*}
    %\frac{1}{i} \frac{\epsilon_0}{ \gamma - (L/\bar\omega) \gamma^2}  + \frac{(L/\bar\omega) \gamma }{1 - (L/\bar\omega) %\gamma}\sigma^2 \leq \epsilon
%\end{align*}
%then the number of iterations $i$ to reach $\epsilon$-accuracy must satisfy 
$$
\frac{1}{i} \frac{2L\epsilon_0}{\bar\omega_T}(2\sigma^2/\epsilon+1) \cdot \frac{4\sigma^2/\epsilon+2}{4\sigma^2/\epsilon+1} \leq \frac{\epsilon}{2}. 
$$
Since $\left( 4\sigma^2/\epsilon+ 2 \right)/\left(4\sigma^2/\epsilon+1\right) \leq 2$, 
the main condition can be rewritten  equivalently as
%$$
%\frac{1}{i} \frac{2L\epsilon_0}{\bar\omega} \cdot 2(2\sigma^2/\epsilon+1) \leq \frac{\epsilon}{2}. 
%$$
%By rearranging terms, we have 
\begin{align*}
    i \geq \frac{4}{\bar\omega_T} \left( 1+ \frac{2\sigma^2}{\epsilon} \right) \cdot \frac{2L\epsilon_0}{\epsilon}.
\end{align*}

\subsection*{Proof of Theorem \ref{thm:mutlinode_SS_SGD}-2.}
If $\gamma < \bar\omega_T/L$, by Lemma \ref{lemma:MainCSC} and by using the convexity of $F$, i.e. $ \left\langle \nabla F(x), x - x^\star \right\rangle \geq F(x)-F(x^\star)$ for $x\in\mathbb{R}^d$ we get 
\begin{align*}
    \mathbf{E} \Vert x^{i+1} - x^\star \Vert^2 
    &\leq \mathbf{E} \Vert x^{i} - x^\star \Vert^2   + 2\gamma^2\sigma^2/\bar\omega_T - 2(\gamma - L\gamma^2/\bar\omega_T )\mathbf{E} [F(x^i) - F(x^\star)]
\end{align*}
By rearranging the terms and using the fact that $F$ is convex, i.e. $F( \sum_{l=0}^{i-1} x^l) \leq \sum_{l=0}^{i-1} F(x^l)$, we then have 
\begin{align*}
     \mathbf{E} \left[ F\left(\frac{1}{i}\sum_{l=0}^{i-1} x^l  \right)- F(x^\star) \right] 
    & \leq \frac{1}{i} \sum_{l=0}^{i-1} \mathbf{E}[ F(x^l) - F(x^\star)] \\
    & \leq \frac{1}{i}\frac{1}{2\gamma}\frac{\epsilon_0}{1-(L/\bar\omega_T)\gamma}  + \frac{\gamma\sigma^2/\bar\omega_T}{1-(L/\bar\omega_T)\gamma},
\end{align*}
where $\epsilon_0 = \| x^0 - x^\star \|^2$. The last inequality follows from the cancellations of the telescopic series the fact that $\| x\|^2\geq 0$ for $x\in\mathbb{R}^d$. If the step-size is
$$
\gamma = \frac{\bar\omega_T}{2}\frac{1}{2\sigma^2/\epsilon +L}, 
$$
then clearly $\gamma  < \bar\omega_T/L$ and 
$$
\frac{\gamma\sigma^2/\bar\omega_T}{1-(L/\bar\omega_T)\gamma} \leq \frac{\epsilon}{2}.
$$
To reach $ \mathbf{E} \left[ F\left(\sum_{l=0}^{i-1} x^l /i \right)- F(x^\star) \right] \leq\epsilon$, Algorithm \eqref{eqn:CompressedSGD} needs the number of iterations $i$
satisfying
%
%
\iffalse
\begin{align*}
    \frac{1}{i}\frac{1}{2\gamma}\frac{\epsilon_0}{1-(L/\bar\omega)\gamma}  + \frac{\gamma\sigma^2/\bar\omega}{1-(L/\bar\omega)\gamma} \leq \epsilon.
\end{align*}
Since ${\left(\gamma\sigma^2/\bar\omega\right)}/\left( {1-L\gamma/\bar\omega}\right) \leq\epsilon/2$, the number of iterations $i$ to reach $\epsilon$-accuracy must satisfy equivalently
\fi
%
%
\begin{align*}
    \frac{1}{i} \frac{1}{\bar\omega_T} \frac{2( 2\sigma^2/\epsilon+L)^2}{4\sigma^2/\epsilon+ L} \epsilon_0 \leq \frac{\epsilon}{2}.
\end{align*}
Since $\left( 4\sigma^2/\epsilon+ 2L \right)/\left(4\sigma^2/\epsilon+L\right) \leq 2$, the main condition can be rewritten equivalently as 
\begin{align*}
    i \geq \frac{2}{\bar\omega_T}\left( 1+ \frac{2\sigma^2}{\epsilon L}\right)\frac{2L\epsilon_0}{\epsilon}
\end{align*}
\subsection*{Proof of Theorem \ref{thm:mutlinode_SS_SGD}-3. }
If $\gamma< \bar \omega_T/L$, by Lemma \ref{lemma:ProofForNC} and the  strong convexity of $F(\cdot)$, i.e. $\| \nabla F(x) \|^2 \geq 2\mu [F(x) - F(x^\star)]$ for $x\in\mathbb{R}^d$ we have 
\begin{align*}
    \mathbf{E} [F(x^{i+1}) - F(x^\star)] 
    &\leq \rho \mathbf{E}[F(x^i) - F(x^\star)] + \frac{L}{\bar\omega_T} \gamma^2\sigma^2, \intertext{where} 
    \rho & = 1 - 2\mu\left(\gamma - \frac{L}{\bar\omega_T}\gamma^2\right).
\end{align*}
By applying the inequality recursively, we get 
\begin{align*}
    \mathbf{E} [F(x^{i+1}) - F(x^\star)] \leq \rho^i \epsilon_0 + \frac{L}{2\mu\bar\omega_T}\frac{\gamma}{1-L\gamma/\bar\omega_T}\sigma^2,
\end{align*}
where $\epsilon_0=F(x^0)-F(x^\star)$. 
If the step-size is 
\begin{align*}
    \gamma = \frac{\bar\omega_T}{2}\frac{1}{2\sigma^2/(\mu\epsilon) +L},
\end{align*}
then clearly $\gamma<\bar\omega_T/L$ and 
\begin{align*}
 \frac{L}{2\mu\bar\omega_T}\frac{\gamma}{1-L\gamma/\bar\omega_T}\sigma^2 \leq \frac{\epsilon}{2}.
\end{align*}

To reach $ \mathbf{E} \| x^i - x^\star \|^2 \leq\epsilon$, Algorithm \eqref{eqn:CompressedSGD} needs the number of iterations $i$ which satisfies
%
%
%$$
%\left( 1- 2\mu(\gamma - L\gamma^2/\bar\omega  ) \right)^i \epsilon_0 \leq %\frac{\epsilon}{2}
%$$
\begin{align*}
  \left( 1- \frac{\mu}{2}\cdot\frac{\bar\omega_T({4\sigma^2}/{(\mu\epsilon)} +L)}{\left( {2\sigma^2}/{(\mu\epsilon)} +L\right)^2} \right)^i \epsilon_0 \leq \frac{\epsilon}{2}    
\end{align*}
Taking the logarithm on both sides, and utilizing the fact that $-1/\log(1-x)\leq 1/x$ for $0<x\leq 1$, we have 
$$
i \geq \frac{2}{\bar\omega_T \mu} \frac{\left( {2\sigma^2}/{(\mu\epsilon)} +L\right)^2}{{4\sigma^2}/{(\mu\epsilon)} +L} \log\left( \frac{2\epsilon_0}{\epsilon} \right).
$$
Since ${ \left( {4\sigma^2}/{(\mu\epsilon)} +2L\right)}/\left({{4\sigma^2}/{(\mu\epsilon)} +L}\right) \leq 2$, the main condition can be rewritten equivalently as 
$$
i \geq \frac{2}{\bar\omega_T} \kappa \left( 1+ \frac{2\sigma^2}{\mu\epsilon L} \right) \log\left( \frac{2\epsilon_0}{\epsilon} \right).
$$

\section{Discussions on Optimizing Parameters of Stochastic Sparsification} \label{app:DiscussionSM}

 In this section, we show how to tune the parameters of stochastic sparsification $p$ to maximize the descent direction. 
 In optimization formulation, for a fixed sparsity budget $T$ we obtain the optimal probabilities $p^{\star}$ by solving the following problem~\cite{wang2018atomo}
 %
 %As illustrated in~\cite{wang2018atomo}, for a fixed sparsity budget $T$ the optimal values $p^\star$ are obtained by solving the following optimization problem
\begin{equation}\label{eqn:minVariance}
    \begin{aligned}
    \mathop{\text{maximize}}\limits_{p\in [0,1]^d} & \quad \omega_p(T) 
    %\sum_{j=1}^d \frac{1}{p_j}  \vert g_j\vert^2 
    \\ \text{subject to} & \quad  \sum_{j=1}^d p_j = T.
    \end{aligned}
\end{equation}
Here, $\omega_p(T)$ can be rewritten as 
\begin{align*}
    \omega_p(T) = \frac{\|g\|^2}{\mathbf{E}\| Q_{T,p}(g) \|^2} = \frac{\|g\|^2}{\sum_{i=1}^d \frac{1}{p_j} g_j^2}.
\end{align*}

This minimization problem \eqref{eqn:minVariance} has the optimal solution $p^\star = (p_1^\star,p_2^\star,\ldots,p_d^\star)$ on the form \cite{wang2018atomo} 
\begin{align*}
    p_i^\star = \begin{cases} 1 & \text{if } i=1,\ldots,n_s \\ {\vert \lambda_i \vert (s - n_s)}/{\sum_{j=n_s+1}^n \vert \lambda_j \vert } & \text{if } i =n_s+1,\ldots,n  \end{cases}
\end{align*}
where $n_s$ is selected such that $p_i^\star$ is bounded above by $1$.
This observation results in many algorithms to compute the optimal probabilities (see e.g., Algorithm 1 in \cite{wang2018atomo}).

\section{Descent Lemma for Multi-node Gradient Methods with Stochastic Sparsification}\label{app:lemma:MultinodeSS_GD}

We include a descent lemma for distributed compressed gradient methods with stochastic sparsification in Equation \eqref{eqn:CompressedSGD}, which are analogous to single-node gradient methods using stochastic sparsification.  

\begin{lemma}
Consider the minimization problem over $F(x)=(1/n)\sum_{j=1}^n f_j(x)$ where each $f_j(\cdot)$ is  (possibly non-convex) $L$-smooth. Let $\omega(T)=(\sum_{j=1}^n 1/[n\omega_j(T_j)] )^{-1}$, where $\omega_j(T_j)$ is the sparsification parameter of node $j$. Suppose that  
\begin{align}\label{eqn:Lemma:MultinodeSS_GD}
x^{+} = x- \gamma \frac{1}{n}\sum_{j=1}^n Q_{T_j}( g_j(x;\xi_j) ),
\end{align}
where each $g_j(x;\xi_j)$  is unbiased and has variance with respect to $\nabla F(x)$ bounded by $\sigma^2$. If $\gamma = \omega(T)/(2L)$, then 
    \begin{align*}
    \mathbf{E} F(x^{+}) \leq  \mathbf{E} F(x) - \frac{\omega(T)}{4L}\mathbf{E}\| \nabla F(x)\|^2 + \frac{\omega(T)}{4L}\sigma^2.
    \end{align*}

\end{lemma}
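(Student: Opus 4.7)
The plan is to mirror the single-node proof of Lemma \ref{lemma:Descent_SS}, carefully accounting for the averaging of the $n$ independent compressed stochastic gradients. First I would observe that the average of $L$-smooth functions is itself $L$-smooth, so $F$ satisfies the standard upper quadratic bound, which applied to the iterate in \eqref{eqn:Lemma:MultinodeSS_GD} gives
\begin{equation*}
F(x^{+}) \le F(x) - \gamma \langle \nabla F(x),\, g^{+}\rangle + \frac{L\gamma^{2}}{2}\|g^{+}\|^{2},
\end{equation*}
where $g^{+} = (1/n)\sum_{j} Q_{T_j}(g_j(x;\xi_j))$. This is exactly the starting point used in Lemmas \ref{lemma:Descent_SS} and \ref{lemma:ProofForNC}, the only change being that the gradient estimator is now an average over nodes.

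Next I would take expectations. By the unbiasedness of the stochastic gradients and of the stochastic sparsifier, $\mathbf{E}[Q_{T_j}(g_j(x;\xi_j))] = \nabla f_j(x)$, so $\mathbf{E}[g^{+}] = \nabla F(x)$ and the cross term collapses to $-\gamma\,\mathbf{E}\|\nabla F(x)\|^{2}$. The crux of the argument is bounding $\mathbf{E}\|g^{+}\|^{2}$. As in the proof of Lemma \ref{lemma:trickBoundedVariance}, Jensen's inequality gives $\mathbf{E}\|g^{+}\|^{2} \le (1/n)\sum_{j} \mathbf{E}\|Q_{T_j}(g_j(x;\xi_j))\|^{2}$. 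Using the defining identity $\mathbf{E}[\|Q_{T_j}(v)\|^{2}] = \|v\|^{2}/\omega_j(T_j)$ and the elementary bound $\mathbf{E}\|g_j\|^{2} \le 2\|\nabla F(x)\|^{2} + 2\,\mathbf{E}\|g_j-\nabla F(x)\|^{2} \le 2\|\nabla F(x)\|^{2}+2\sigma^{2}$, this becomes
\begin{equation*}
\mathbf{E}\|g^{+}\|^{2} \le \bigl(2\,\mathbf{E}\|\nabla F(x)\|^{2}+2\sigma^{2}\bigr)\cdot\frac{1}{n}\sum_{j=1}^{n}\frac{1}{\omega_j(T_j)} = \frac{2}{\omega(T)}\bigl(\mathbf{E}\|\nabla F(x)\|^{2}+\sigma^{2}\bigr),
\end{equation*}
where the last equality is precisely the harmonic-mean definition $1/\omega(T) = (1/n)\sum_j 1/\omega_j(T_j)$ given in the statement.

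Finally I would combine these estimates to obtain
\begin{equation*}
\mathbf{E} F(x^{+}) \le \mathbf{E} F(x) - \Bigl(\gamma - \tfrac{L\gamma^{2}}{\omega(T)}\Bigr)\mathbf{E}\|\nabla F(x)\|^{2} + \tfrac{L\gamma^{2}}{\omega(T)}\sigma^{2},
\end{equation*}
and plug in $\gamma = \omega(T)/(2L)$: then $\gamma - L\gamma^{2}/\omega(T) = \omega(T)/(2L)-\omega(T)/(4L) = \omega(T)/(4L)$ and $L\gamma^{2}/\omega(T) = \omega(T)/(4L)$, yielding the claimed inequality.

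The main obstacle is a subtle one of interpretation rather than algebra: the compression parameter $\omega_j(T_j)$ is in principle a function of the realized stochastic gradient $g_j$ (since $\omega_{p_j}(T_j)=\|g_j\|^{2}/\mathbf{E}\|Q_{T_j}(g_j)\|^{2}$), whereas the lemma statement and the harmonic-mean definition treat it as deterministic given $T_j$. I would handle this by reading $\omega_j(T_j)$ as a uniform per-iteration constant (or a high-probability/a.s.\ lower bound on the instantaneous ratio), so that the nodewise identity $\mathbf{E}\|Q_{T_j}(g_j)\|^{2} = \mathbf{E}\|g_j\|^{2}/\omega_j(T_j)$ is literally valid and the harmonic-mean cancellation goes through unchanged. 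Beyond that bookkeeping, the rest of the proof is routine and parallels Lemma \ref{lemma:Descent_SS}, with Jensen's inequality playing the role of collapsing the multi-node sum.
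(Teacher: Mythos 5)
Your proposal is correct and follows essentially the same route as the paper's proof: apply the $L$-smoothness quadratic bound to the averaged iterate, push the squared norm inside the sum via Jensen/Cauchy--Schwarz, use the identity $\mathbf{E}\|Q_{T_j}(g_j)\|^2=\|g_j\|^2/\omega_j(T_j)$ together with the bound $\mathbf{E}\|g_j\|^2\le 2\mathbf{E}\|\nabla F(x)\|^2+2\sigma^2$, collapse the sum through the harmonic-mean definition of $\omega(T)$, and substitute $\gamma=\omega(T)/(2L)$. Your remark about $\omega_j(T_j)$ being treated as deterministic is a fair observation about a subtlety the paper also glosses over, and your resolution matches the paper's implicit reading.
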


\begin{proof}
By Cauchy-Schwartz's inequality, we can show that $F(x)=(1/n)\sum_{j=1}^n f_j(x)$ is also $L$-smooth. Using the smoothness assumption (Lemma 1.2.3. of \cite{nesterov2018lectures}) and Equation \eqref{eqn:CompressedSGD}, we have 
\begin{align*}
    F(x^+) \leq F(x) - \gamma \frac{1}{n}\sum_{j=1}^n \langle \nabla F(x), Q_{T_j} (g_j(x;\xi_j)) \rangle + \frac{L\gamma^2}{2} \frac{1}{n}\sum_{j=1}^n \|  Q_{T_j} (g_j(x;\xi_j))\|^2.
\end{align*}

Since $\omega_j(T_j)$ is defined by 
\begin{align*}
    \omega_j(T_j)=    \frac{|| g_j(x;\xi_j)||^2}{ \mathbf{E} ||{Q}_{T_j}(g(x;\xi_j)) ||^2},
\end{align*}
we can easily show that 
\begin{align}\label{eqn:DescentLemmaSS_GD_main}
    \frac{1}{n}\sum_{j=1}^n \|  Q_{T_j} (g_j(x;\xi_j))\|^2 \leq \frac{2}{\omega(T)}\| \nabla F(x) \|^2 + \frac{2}{\omega(T)}\sigma^2,
\end{align}
where $\omega(T)=(\sum_{j=1}^n 1/[n\omega_j(T_j)])^{-1}$ and $\omega_j(T_j)$ is the sparsification parameter of node $j$. This result follows from using the inequality $\| x+y\|^2 \leq 2\|x\|^2+2\|y\|^2$ with $x= g_j(x;\xi_j) - \nabla F(x)$ and $y=\nabla F(x)$, and from the fact that $\mathbf{E}\|g_j(x;\xi_j)-\nabla F(x)\|^2\leq \sigma^2$.

Next, by taking the expectation and then using unbiased property of stochastic gradients, the fact that $F(x)=(1/n)\sum_{j=1}^n f_j(x)$, and Inequality \eqref{eqn:DescentLemmaSS_GD_main},  we have 
\begin{align*}
    \mathbf{E} F(x^+) \leq \mathbf{E} F(x) - \left(\gamma - L\gamma^2/\omega(T) \right) \mathbf{E} \| \nabla F(x) \|^2 + L\gamma^2 \sigma^2/\omega(T).
\end{align*}
Choosing $\gamma = \omega(T)/(2L)$, we complete the proof.

\end{proof}

\section{Additional Experiments on Logistic Regression over \texttt{URL}
}

\begin{figure}
    \centering
    \includegraphics[width=\textwidth]{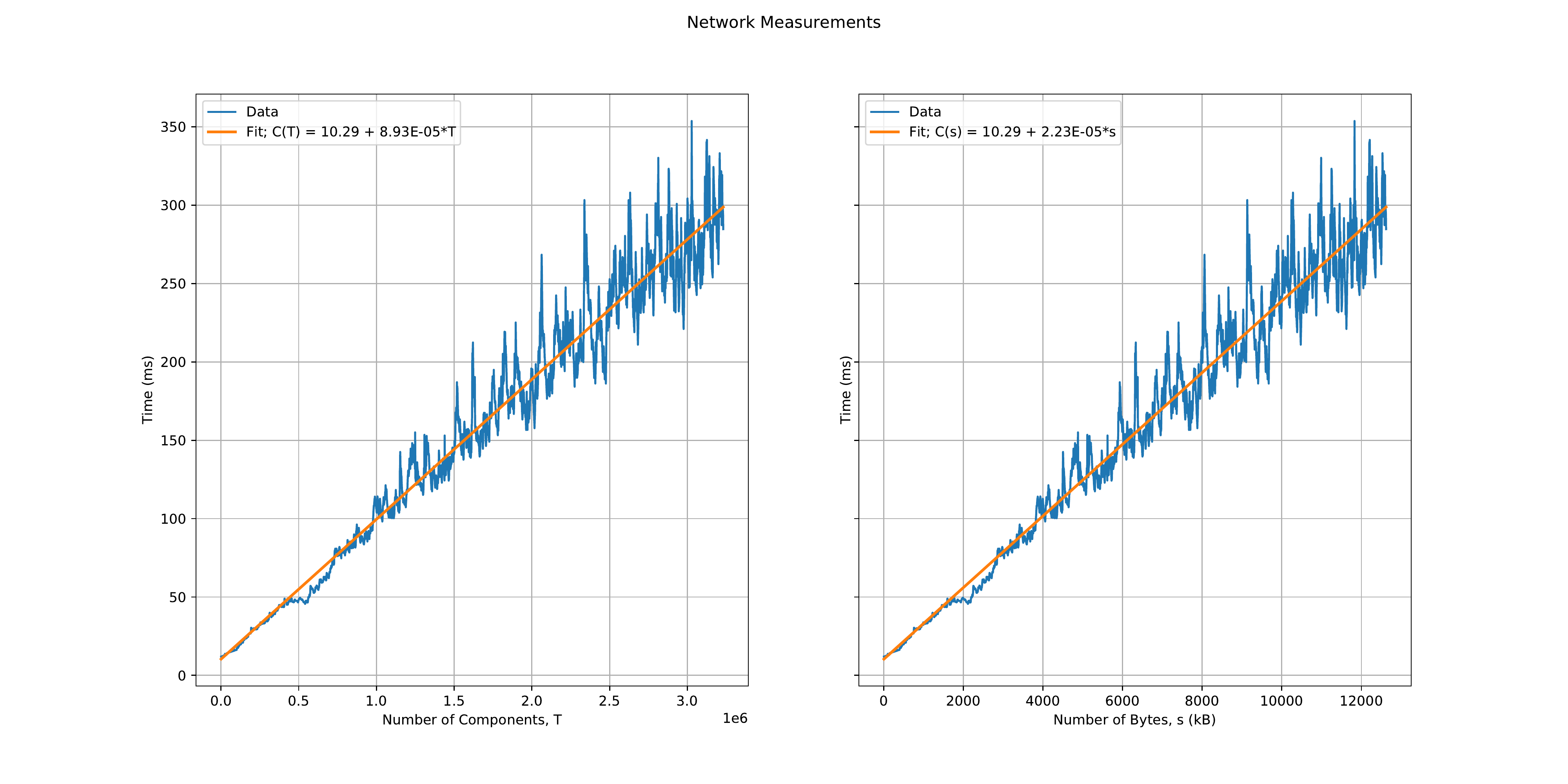}
    \caption{Time for communicating the vector with sparsity budget $T$ (left) and its associated bytes (right).}
    \label{fig:network-model}
\end{figure}

\begin{figure}
    \centering
    \includegraphics[width=\textwidth]{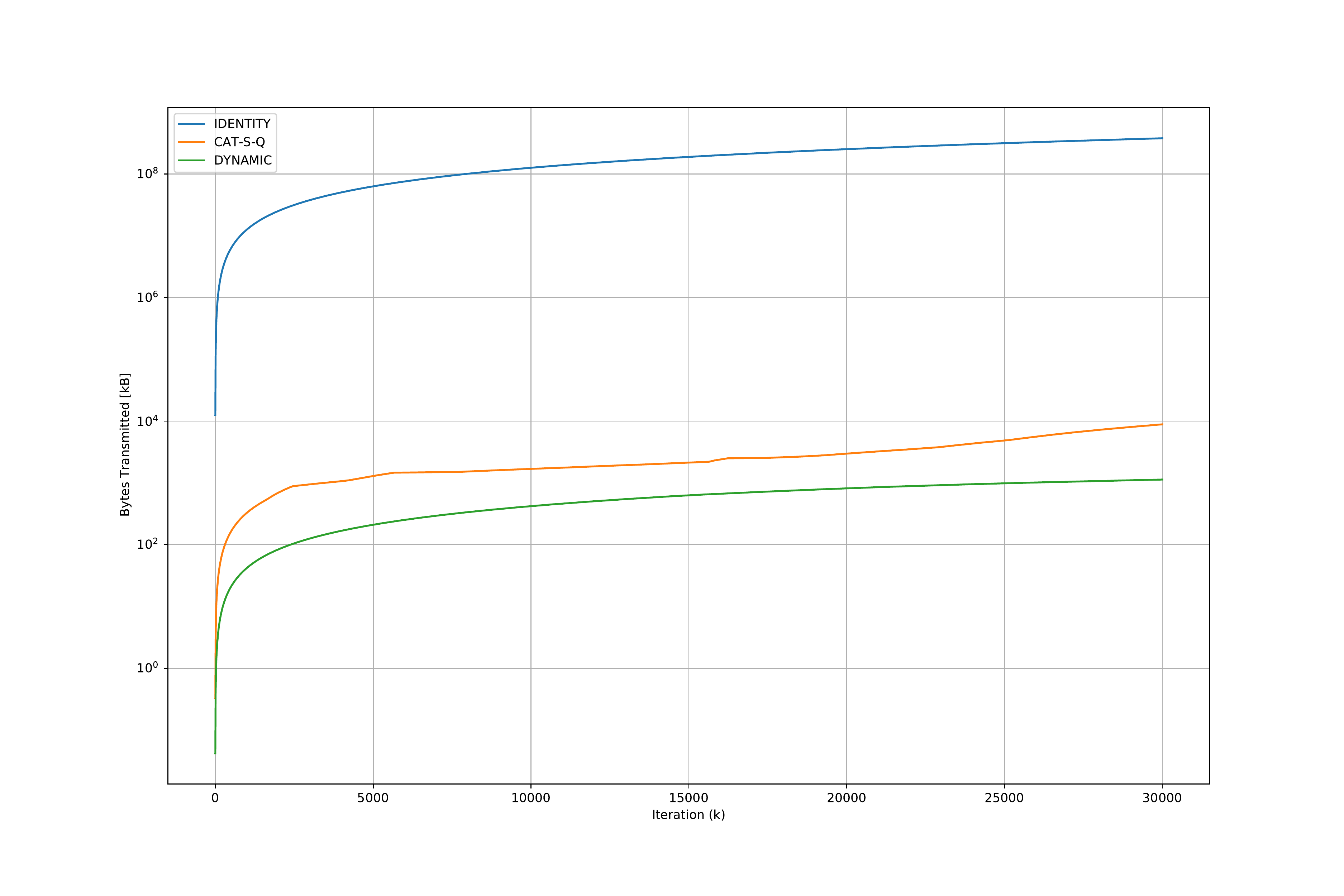}
    \caption{Gradient transmission (bytes) in each iteration $k$ for full gradient descent (identity), CAT S+Q, and Alistarh's S+Q (dynamic). }
    \label{fig:comm-efficiency}
\end{figure}

\begin{figure}
    \centering
    \includegraphics[width=\textwidth]{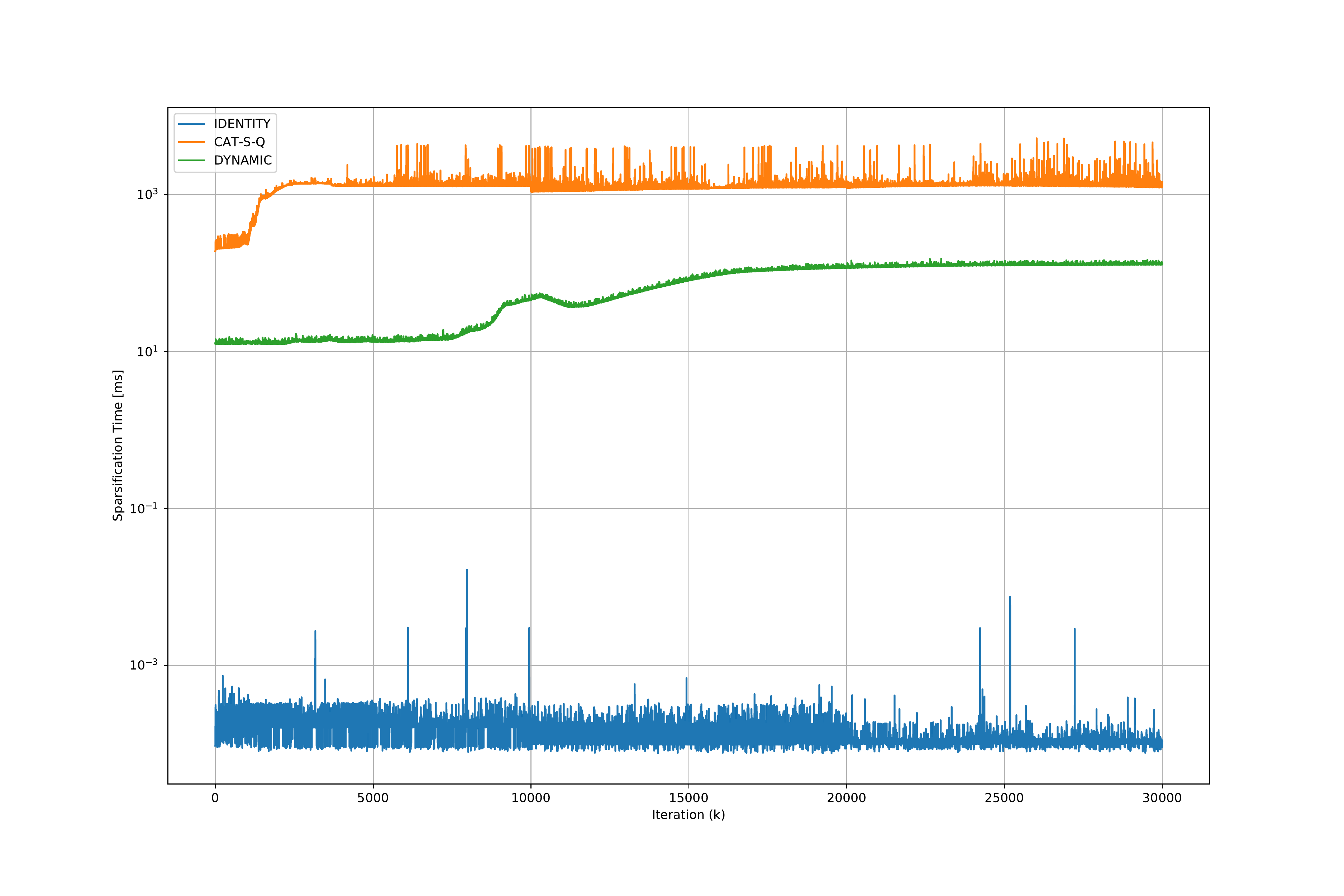}
    \caption{Sparsification time in each iteration $k$ for full gradient descent (identity), CAT S+Q, and Alistarh's S+Q (dynamic).}
    \label{fig:spar-time}
\end{figure}

\begin{figure}
    \centering
    \includegraphics[width=\textwidth]{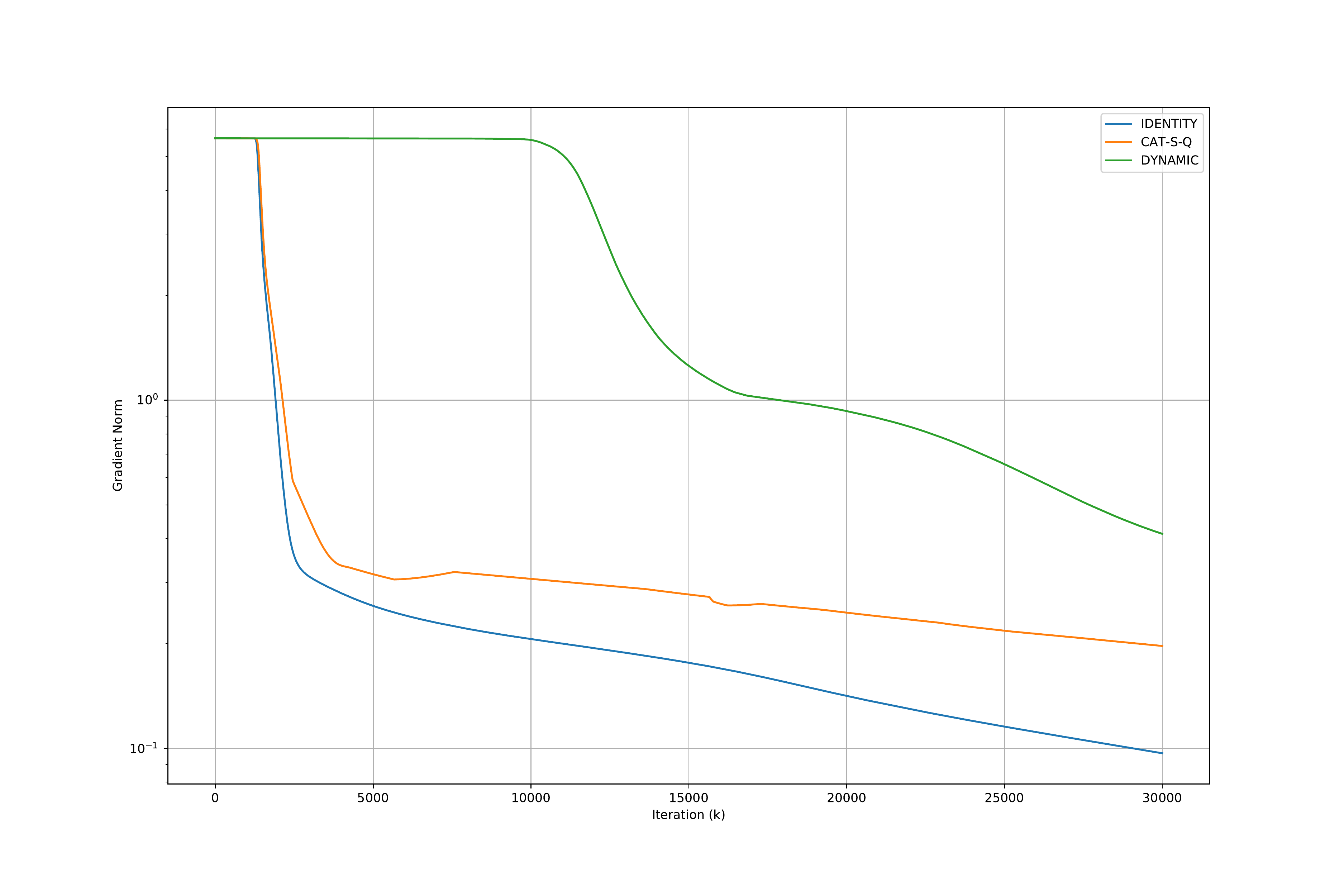}
    \caption{Convergence performance in the gradient norm $\| \nabla F(x^k) \|$ for full gradient descent (identity), CAT S+Q, and Alistarh's S+Q (dynamic).}
    \label{fig:grad-norm}
\end{figure}

\begin{figure}
    \centering
    \includegraphics[width=\textwidth]{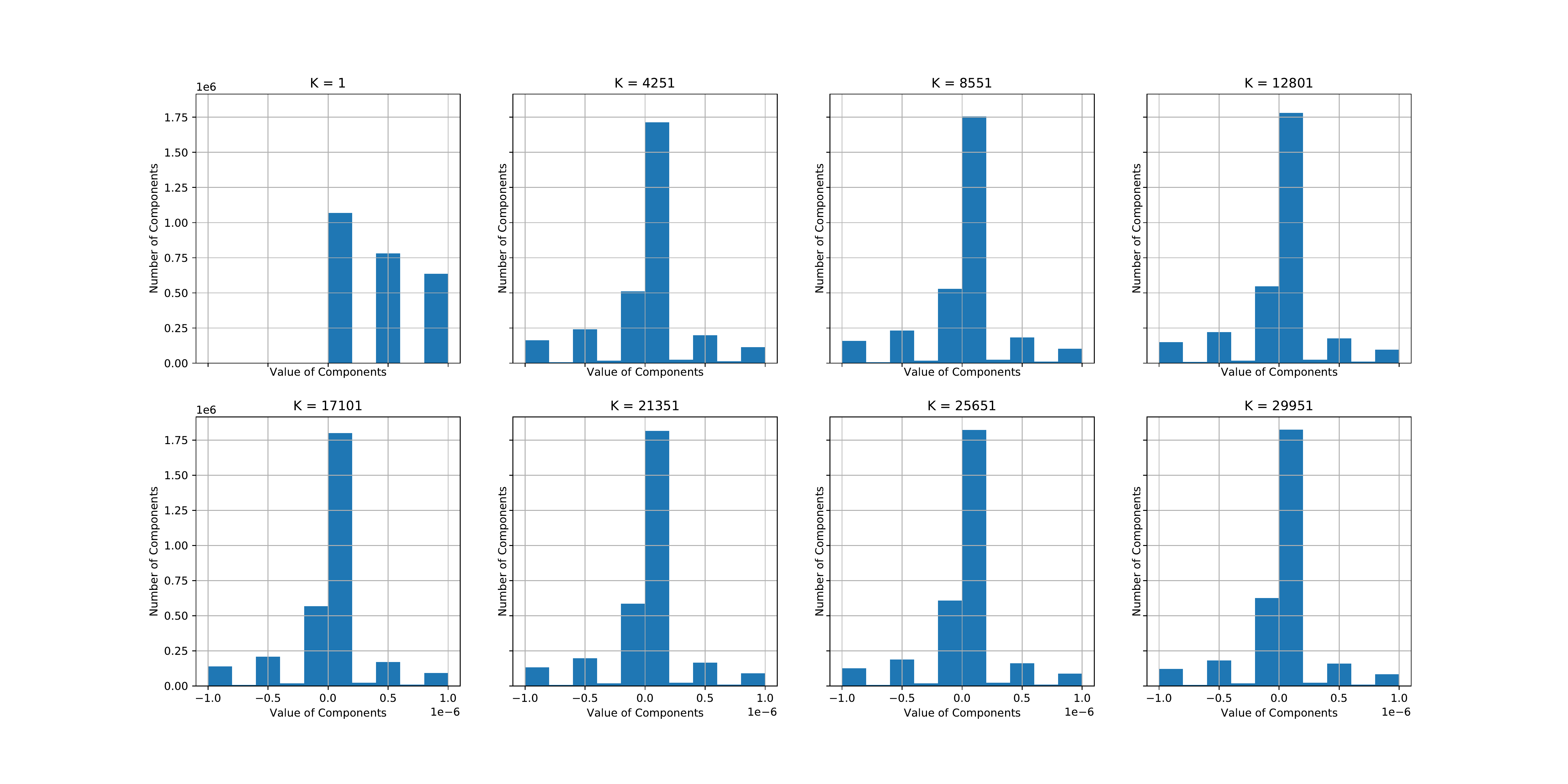}
    \caption{Histogram of the gradient elements when CAT S+Q is run for  $30,000$ iterations. }
    \label{fig:grad-hist}
\end{figure}

\begin{figure}
    \centering
    \includegraphics[width=0.5\textwidth]{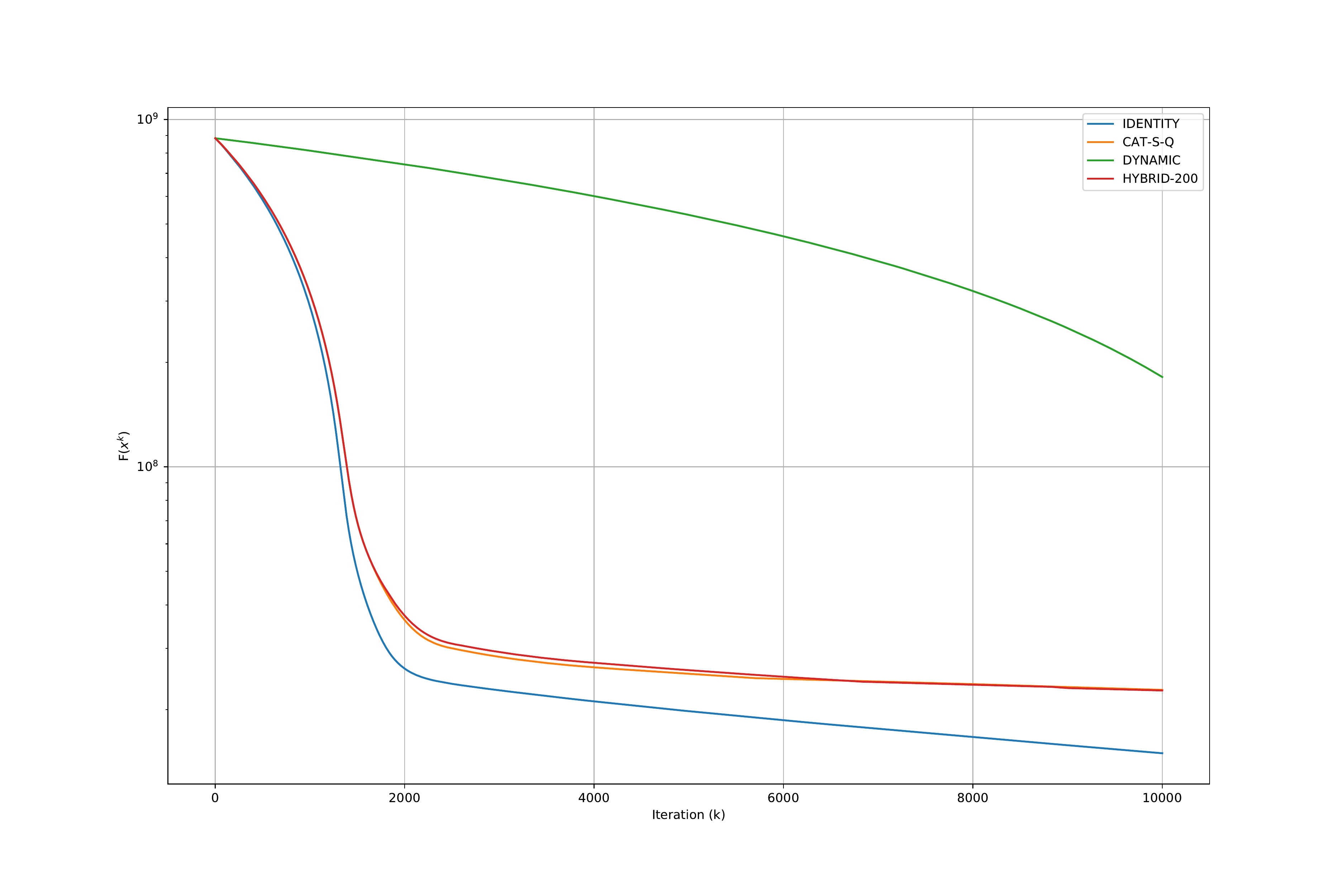}%
    \includegraphics[width=0.5\textwidth]{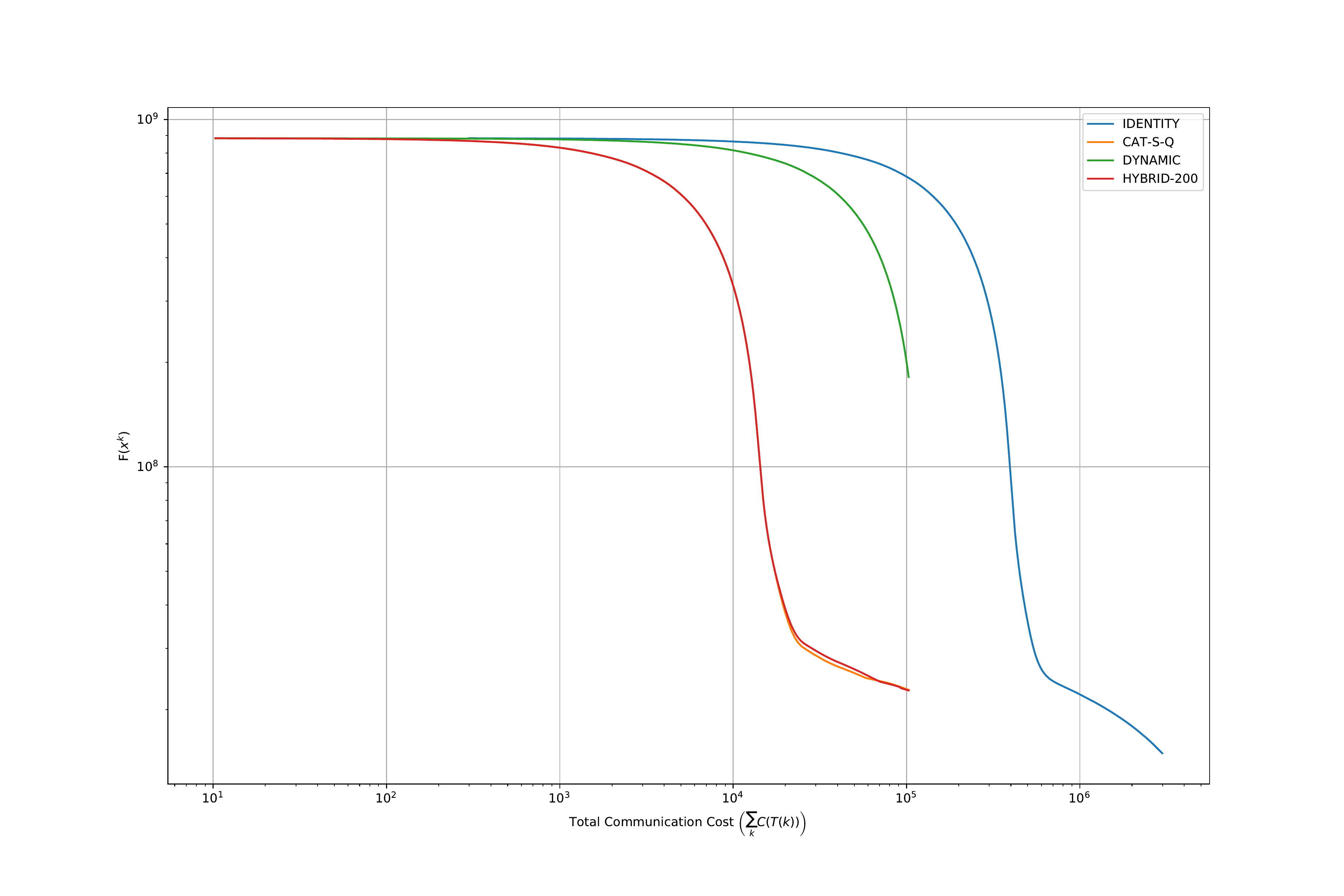}
    \caption{
    Performance with respect to iteration counts (left) and to communication costs (right) for different algorithms. We evaluated full gradient descent (identity), CAT S+Q, Alistarh's S+Q (dynamic) and the hybrid algorithm that uses CAT framework for every $200$ iterations (hybrid-200).}
    \label{fig:comp-hybrid-1}
\end{figure}

\begin{figure}
    \centering
    \includegraphics[width=0.5\textwidth]{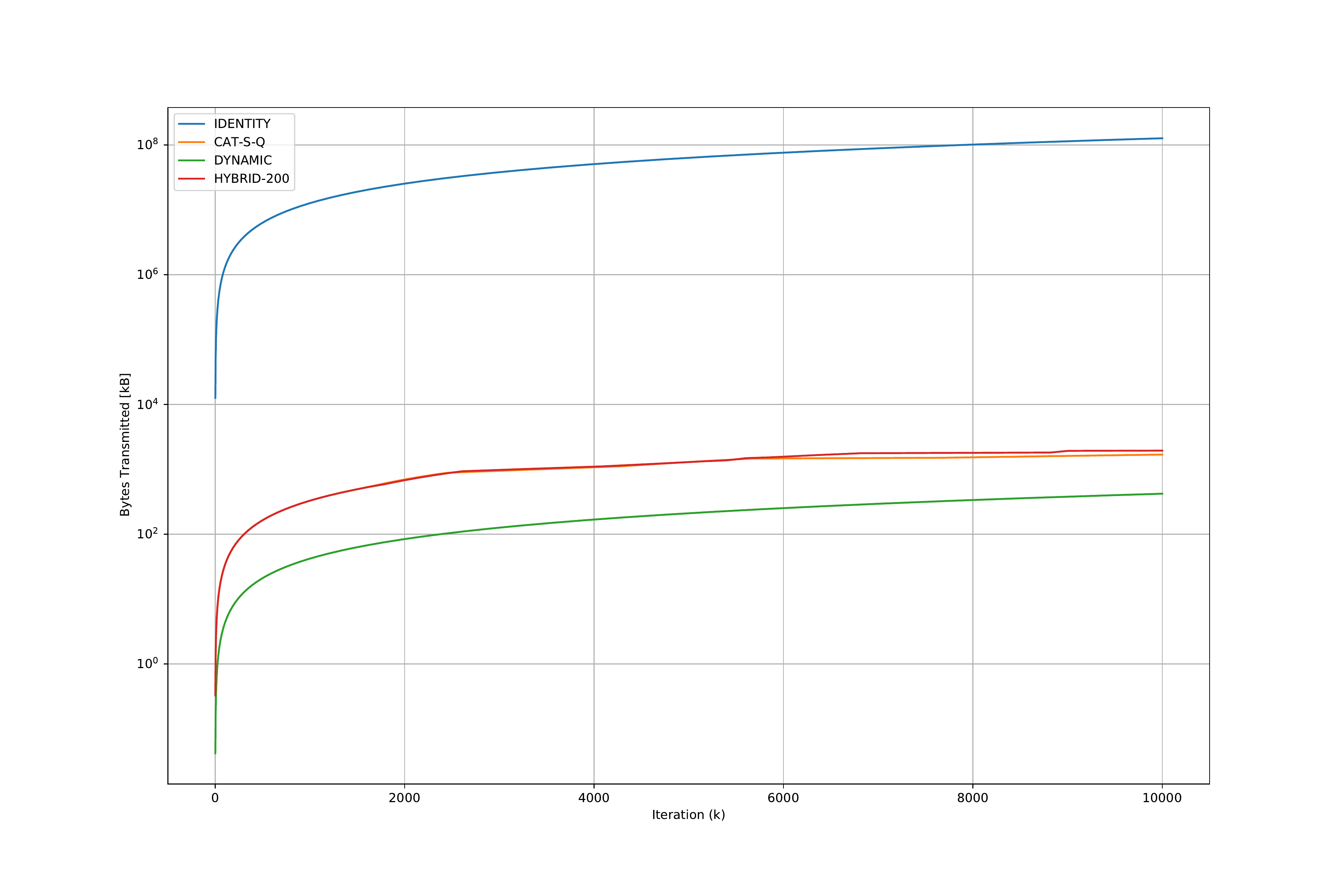}%
    \includegraphics[width=0.5\textwidth]{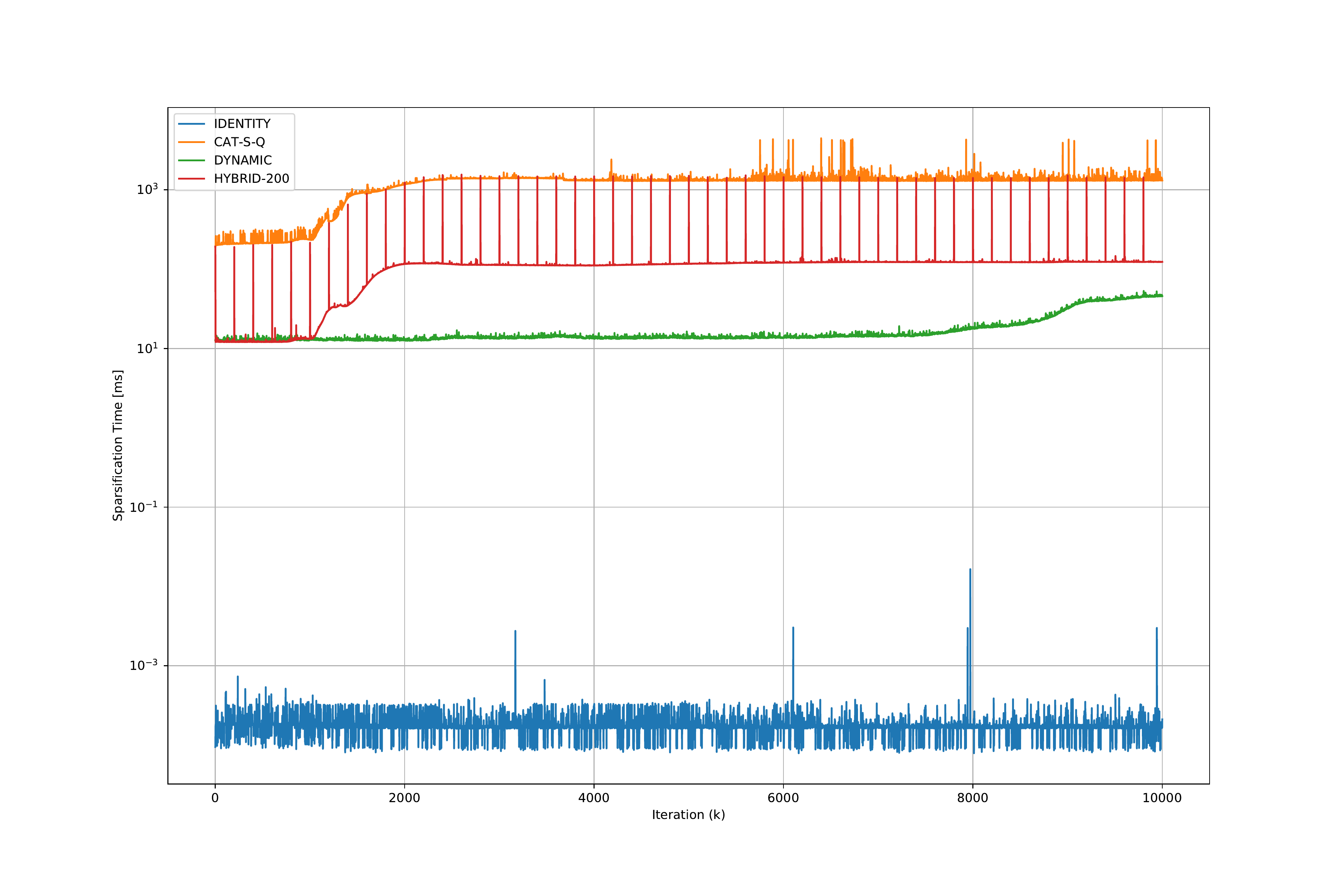}
    \caption{
    Gradient transmission in bytes (left)
    and  sparsification time in each iteration (right) for different algorithms. We evaluated full gradient descent (identity), CAT S+Q, Alistarh's S+Q (dynamic) and the hybrid algorithm that uses CAT framework for every $200$ iterations (hybrid-200).}
    \label{fig:comp-hybrid-2}
\end{figure}

In this section, we provide additional simulations of logistic regression problems over the \texttt{URL} data set with dimension $d=3.2\cdot 10^6$. We fit a linear communication model based on measurements, and benchmark our CAT framework and the heuristic from~\cite{alistarh2017qsgd} on gradient descent with dynamic sparsification together with quantization in the single-master, single-worker architecture.

In Figure \ref{fig:network-model} each blue data point represents the average of ten measurements of the end-to-end transmission time of a sparsified gradient with sparsity budget $T=\{1000, 2000, 3000, \ldots, d\}$. The orange lines demonstrate that an affine communication model is able to capture the communication cost. In retrospect, the affine behaviour should be expected, since we use the \texttt{ZMQ} library which initiates TCP communication once, and then reuses the communication together with buffers to optimize message transmission. Our ability to capture the communication cost (time) with an affine model indicates CAT that the framework could provide near-optimal performance in terms of communication time.
Similarly for energy-constrained applications in IoT devices we can indeed investigate how the energy spent is related to the information transmitted. Utilizing this characteristics, our CAT framework can communicate information efficiently with low energy costs.

To illustrate how the CAT framework reduces communication cost and wall-clock time to reach target solution accuracy, we compared CAT S+Q  and Alistarh's S+Q (dynamic), against full gradient descent. 
From Figure \ref{fig:comm-efficiency}, CAT S+Q  and Alistarh's S+Q reduce communication costs by 4 and 5 orders of magnitudes, respectively, compared to full gradient descent. %
However, we also observe that sparsification time of CAT S+Q is higher than Alistarh's S+Q by roughly an order of magnitude, as shown in Figure \ref{fig:spar-time}. Interestingly, the sparsification times for both algorithms increase as iteration counts grow. 
This happens because the sorting strategy in C++ leads to a worse performance especially when the gradient elements become more homogeneous. 
After running CAT S+Q for $30,000$ iterations, we observe in Figure \ref{fig:grad-hist} that 
gradient information after iteration $K=4251$ has very similar histograms. In this case, sorting from scratch at each iteration is inefficient.

Based on these observations,  we propose to let the CAT framework update the sparsity budget every $S$ iterations (rather than every iteration) and then keep the sparsity budget fixed in the iterates between CAT recalculations. %occasionally in order to alleviate the impact of the sorting strategy. 
%
%This hybrid scheme reduces sparsification time by fixing sparsity budget $T$, and by computing it using the CAT heuristic for every $S$ iterations.
%
Figure \ref{fig:comp-hybrid-1} shows that this hybrid heuristic with $S=200$ (i.e. rather infrequent updates) achieves only slightly worse loss function convergence with respect to iteration count and communication cost.
From Figure \ref{fig:comp-hybrid-2},
despite almost the same data transmission size in each iteration, CAT S+Q reduces the time to sparsify the gradients by roughly an order of magnitude.

\section{Additional Experiments for Sparsification+Quantization, and Stochastic Sparsification} \label{App:AE}

\begin{figure}[t]
\centering
   \includegraphics[width=0.47\textwidth]{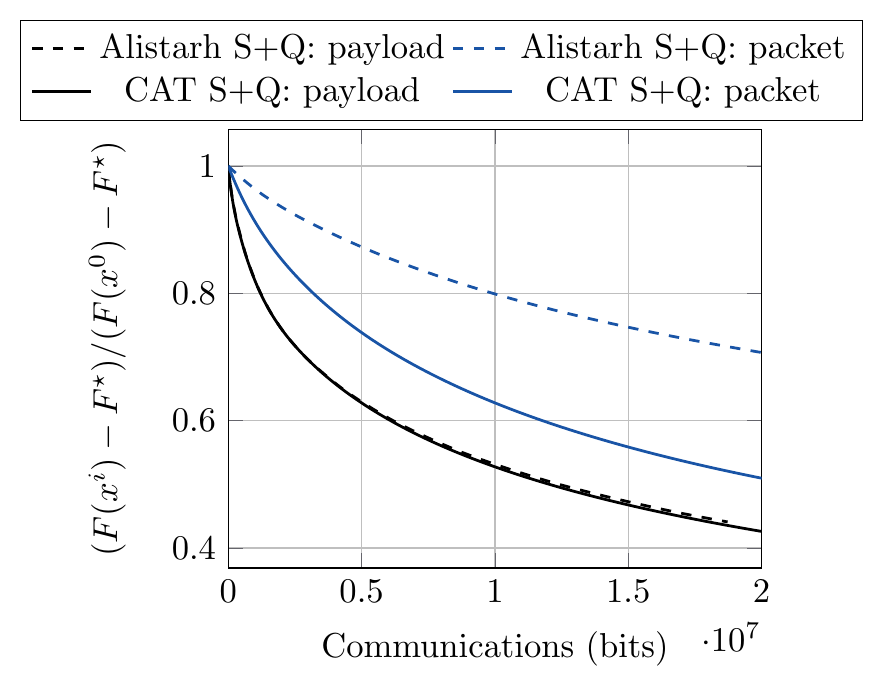}
    ~ %add desired spacing between images, e. g. ~, \quad, \qquad, \hfill etc. 
    %(or a blank line to force the subfigure onto a new line)
    \caption{CAT sparsification + quantization on the \texttt{RCV1} data set.} %with $697,641$ samples and $47,236$ features.}
    \label{fig:SQ_eternet}
\end{figure}

In this section, we include additional simulations that illustrate communication efficiency of gradient descent with our CAT frameworks for logistic regression problems on the \texttt{RCV1} data set. We implemented
communication aware algorithms using sparsification together with quantization, and using stochastic sparsification. 

For sparsification together with quantization, we benchmarked CAT against the dynamic tuning introduced in~\cite{alistarh2017qsgd}. 
%
%We illustrate the algorithm on the \texttt{RCV1} data set %that we used in Section~\ref{sec:SGD_Illustration} data set 
%We compared CAT to the dynamic tuning introduced in~\cite{alistarh2017qsgd}. 
%
The black lines of Figure ~\ref{fig:SQ_eternet} illustrate 
convergence results for the payload communication model, i.e., $C(T)=P^{\texttt{SQ}}(T)$ defined in Equation~\eqref{eq:Payload_SQ}. 
 The blue lines are the results for when $C(T)$ follows the packet model in Equation~\eqref{eq:COMM2} with $c_1=128$ bytes, $c_0=64$ bytes and $P_{\max}=128$ bytes. These lines indicate that if we only count the payload, then the two methods are comparable.  
 Our CAT tuning rule outperforms~\cite{alistarh2017qsgd} by only a small margin. This suggests that the heuristic in~\cite{alistarh2017qsgd} is quite communication efficient in the simplest communication model using only payload.  
 %in the simplest case when only payload is communicated.  
 However, the heuristic rule is agnostic to the actual communication model. % $C(T)$.
 Therefore, we should not expect it to perform well for general $C(T)$. %expected to  for general 
 The blue lines show that the CAT is roughly two times more communication efficient than the the dynamic tuning rule in~\cite{alistarh2017qsgd} for the packet communication model.

\begin{figure}
    \centering
    \includegraphics[width=0.45\textwidth]{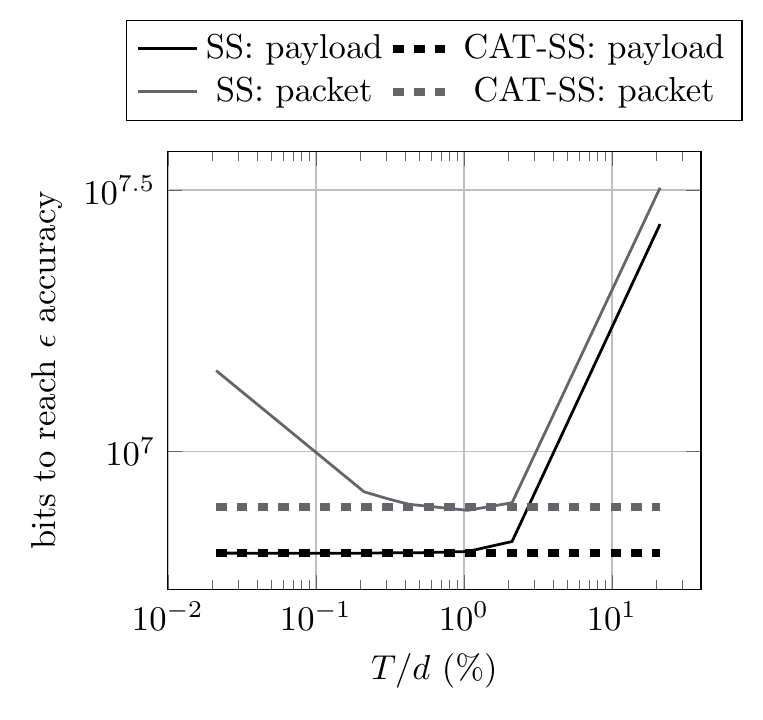}
    \caption{Expected communicated bits to reach $\epsilon$-accuracy for gradient descent with stochastic sparsification.}
    \label{fig:SS_singleNode}
\end{figure}

Next, we evaluated the performance of gradient descent with CAT stochastic sparsification. The communications are averaged over three Monte Carlo runs. Figure~\ref{fig:SS_singleNode} shows that stochastic sparsification has the same conclusions as deterministic sparsification. 
In the simplest payload model it is best to choose $T$ small.  
%
%For stochastic sparsification, 
%In Figure~\ref{fig:SS_singleNode} we evaluated the performance of our algorithm with the optimal probability scheme from ~\cite{wang2018atomo} on the \texttt{RCV1} data set. The communications are averaged over three Monte Carlo runs.
%
%
%
%
%Like for the deterministic sparsification, 
%in Section \ref{subsec:benefitDynamicSparsifier}
%in the payload model it is best to choose $T$ small. 
However, for the packet model  we need to  carefully tune  $T$ so that it  is neither to big or to small. Our CAT rule adaptively finds the best value of $T$ in both cases.

\end{document}